\newcommand{\revision}[1]{{{#1}}}
\newtheorem{theorem}{Theorem}[section]
\newtheorem{lemma}[theorem]{Lemma}
\newtheorem{proposition}[theorem]{Proposition}
\theoremstyle{definition}
\newtheorem{definition}[theorem]{Definition}
\theoremstyle{remark}
\newtheorem{remark}[theorem]{Remark}
\DeclareMathOperator\supp{supp}
\renewcommand\subset{\subseteq}
\renewcommand\supset{\supseteq}
\newcommand\sphere{{\mathbb S}^1}
\newcommand{\oset}[2]{%
  {\mathop{#2}\limits^{\vbox to -.5\ex@{\kern-\tw@\ex@
   \hbox{\scriptsize #1}\vss}}}}
\author[1]{Jean-David Benamou}
\author[1]{Vincent Duval}
\affil[1]{INRIA Paris (MOKAPLAN) and CEREMADE, CNRS, Universit\'e Paris-Dauphine, PSL Research University, 75016 PARIS, FRANCE \texttt{\{jean-david.benamou,vincent.duval\}@inria.fr}}
\begin{document}
\title{Minimal convex extensions  and finite difference discretization of the quadratic Monge-Kantorovich problem} 
\maketitle
\date{}

\begin{abstract}
We present an adaptation of the MA-LBR scheme to the Monge-Amp\`ere equation with second boundary value condition, provided the target is a convex set. This yields a fast adaptive method to numerically solve the Optimal Transport problem between two absolutely continuous measures, the second of which has convex support. The proposed numerical method actually captures a specific Brenier solution which is minimal in some sense. We prove the convergence of the method as the grid stepsize vanishes and we show with numerical experiments that it is able to reproduce subtle properties of the Optimal Transport problem.
\end{abstract} 




\section{Introduction} 

Given two bounded open
domains $\SC$ and $\TG$ in  $\R^2$ and (strictly positive) 
probability  densities $f$ and $g$, defined respectively on $\SC$ and $\TG$,  
our goal is to numerically solve the quadratic Monge-Kantorovich problem 

\begin{equation}
\label{eq:MK}
\inf_{\{\push{T}{f} = g \}}  \int_{\SC} \| x-T(x) \|^2 \, f(x) \, dx 
\end{equation}
where $T: \SC \to \TG$ is a Borel map and $\push{T}{f} = g$ is a  mass conservation property, that is (see also \eqref{push})
\begin{equation}
\label{eq:PF}
\int_{T^{-1}(A)} f(x)\,dx  = \int_{A} g(y)\,dy , \mbox{  for all Borel subset $A$ of $Y$}. 
\end{equation}

This problem has been extensively studied, we refer the reader to 
the classical monograph of Villani \cite{MR2459454} and also the more recent book by Santambrogio \cite{santambrogio2015optimal} for a comprehensive review of its mathematical  theory and  applications.

\paragraph{Numerical approaches to optimal transport.} From the numerical point of view, the oldest approach is  the linear programming  formulation of Kantorovich  \cite{kant} which 
relaxes the problem in the product space  $\SC \times \TG$. This approach however does not  scale with the size of the  discretization.
The  so called ``Benamou-Brenier'' approach  \cite{MR1738163}  is based on a different  convex relaxation 
in a time extended space.  It is difficult to assess exactly its  efficiency but its  many numerical implementations suggest it does no better than $O(N^3)$.

Significant progress has been achieved this last decade  and  new algorithms are now available which can reach almost linear complexity. 
They can be classified in two groups~:  first, alternate projection methods (a.k.a. Split Bregman, Sinkhorn, IPFP) \cite{cuturi13, galichon2016optimal, BCCNP}  which are based on 
the entropic regularization of the Kantorovich problem. These methods are extremely flexible, they apply to a much wider range 
of Optimal Transport problems \revision{(OT)} and are easy to parallelize. The entropic regularization however  blurs the transport map. Decreasing this effect  requires 
more sophisticated tools, see  \cite{chizat2016scaling} and references therein.

The second class relies on the Monge-Amp\`ere (MA) interpretation of problem (\ref{eq:MK}). 
Because the optimal solution satisfies  $T = \nabla u$, u convex (see theorem \ref{thm:villani}) below), one seeks to solve
\begin{equation}
\label{eq:MA}
\begin{cases}
\det(\nabla^2 u) = \dfrac{f}{ g \circ \nabla u}  & \text{on } \SC, \\
  \revision{\nabla u(\SC) \subseteq  \overline{\TG}}, &\\
u  \text{ convex.}
\end{cases}\tag{MA-BV2}
\end{equation} 
Classical solutions of this problem have been studied in \cite{urbas}.  They are unique up to a constant 
which can be fixed, for example by adding $ \int_X u\, dx$ to the \revision{r.h.s. of the first equation in~\eqref{eq:MA}}, this additional term must then vanish because of the  densities  balance.
Weak solutions can also be considered either in the Aleksandrov sense or in the 
regularity framework developed after Cafarelli in the 90s using  Brenier weak solutions, that is solutions of~\eqref{eq:MK}. 
{\em Section~\ref{sec:ma}}   briefly reviews these notions as we will work with $\Cder{}^1$ solutions.

\paragraph{Numerical resolution of the Monge-Amp\`ere equation.}
Numerical methods based on Monge-Amp\`ere  subdivide again in two branches, the semi-discrete approach 
where $g$ is an empirical measure with a finite number $N$ of Dirac masses \cite{oliker, cullen, merigot , levy2015numerical}, and finite-difference methods (FD) where $f$ is discretized on a grid of size $N$ \cite{loeper, agueh, bfo}. Efficient semi-discrete algorithms rely on the  fast  computation  of  the measure of 
Laguerre cells which correspond to the  subgradient of the  dual  OT map  at the Dirac locations. 
In this paper we focus on the second approach, but we will also use that finite differences solutions yield an approximation of this subgradient at grid points.

The second boundary value condition  (BV2),
\begin{equation}
\label{eq:BV2}
\revision{\nabla u(\SC) \subseteq  \overline{\TG},}
\end{equation} 
is a non local condition and a difficulty for the Monge-Amp\`ere finite differences  approach.  Under a convexity hypothesis for $\TG$, an 
equivalent local  non-linear boundary condition is  given in \cite{bfo} which preserves the  monotonicity (aka ``degenerate ellipticity'' 
after  Oberman \cite{oberman2006convergent}) of the scheme. In particular a Newton method can be applied  for the solution of the discretized system \revision{for 
periodic \cite{loeper, agueh} or  Dirichlet \cite{obf, mirebeau3d} Boundary Conditions (BC)}. 
We provide in {\em Section~\ref{sec:cvxext}} a new interpretation of these BC in terms of an infinite domain  ``minimal'' convex extension. 
It can be used to build the same boundary conditions as in \cite{bfo}, it shows how to extend the FD scheme outside of possiby non convex supports of $f$ and also provides a suitable continuous interpolation tool for the convergence proof in {\em Section~\ref{sec:convergence}}.

\revision{For the sake of completeness, we mention that Optimal transport problems can be attacked trough  non-specific methods based on the Kantorovich linear programming relaxation and 
its Entropic regularization \cite{BCCNP, oberman_efficient_2015, schmitzer_sparse_2016} amongst many ...}

\paragraph{Convergence of the discretizations.} Existing convergence proofs for FD methods in \cite{obf,mirebeau3d} rely on the viscosity solutions  of Crandall and Lions  \cite{ug} and 
the abstract convergence theory  of  Barles and Souganidis \cite{BS}.  This is a powerful framework which can be applied, in particular, to general 
degenerate elliptic non-linear 
second order   equations. Our problem however has two specificities. First, the Monge-Amp\`ere operator is degenerate elliptic 
only on the cone of convex function and this constraint must somehow be satisfied by the discretization and preserved in the convergence process.  
Second, the theory requires a uniqueness principle stating that viscosity sub-solutions are below  super-solutions.   The first problem or more generally 
the approximation of convex functions on a grid, has attracted a lot of attention, see \cite{mirebeau2016adaptive} and the references therein. 
The  Lattice Basis Reduction (LBR)  technique in particular 
was  applied to the MA Dirichlet problem in \cite{malbr}. 
The second issue (uniquess principle) is more delicate and, even though the BV2 reformulation 
in \cite{bfo} clearly belongs to the family of oblique boundary conditions (see \cite{ug} for references)  there  is, to the best of our knowledge, no treatment of the specific (MA-BV2) and 
convexity constraint  in the viscosity theory literature.

We follow  a different path in this paper. We  build  a ``minimal''  convex extension interpolation of the discrete solutions and interpret it as an
Aleksandrov solution of an adapted semi-discrete problem.  We can then use classical optimal transport theory to prove convergence of our 
finite difference discretization.  
 Instead of  monotonicity and consistency of the scheme, the proof relies on three ingredients~:  specific 
 properties of the LBR discretization of the Monge-Amp\`ere operator ({\em Section~\ref{ss}}), the volume conservation enforced by the  BV2 
 boundary condition and the uniqueness and the $\Cder^{1}$ regularity of the limit problem. This last condition also requires the convexity of the target 
$\TG$. We borrow here some of the techniques used in \cite{carlier2015discretization} to prove convergence  of semi-discrete approximation of JKO gradient steps  problems. 
In the case where $g$ the target density is not constant, we show that simple centered FD is sufficient for the discrete gradient. 
In summary,  we provide a convergent finite difference method for the optimal transport problem  which applies to Lebesgue integrable source densities $f$ and 
Lipschitz target densities $g$ with convex support.

\paragraph{Newton solvers.} A common feature of the semi-discrete and FD approaches is the successful use of a damped  Newton method to solve the discrete set of equations. It  results in a numerically  observed linear complexity of these methods.  M\'erigot, Kitagawa and Thibert~\cite{kitagawa2016convergence} have proven  in the semi-discrete case that the Jacobian of the discrete non-linear system 
is strictly positive definite, they show convergence of the Newton method and  provide speed convergence. For finite difference 
similar results are available for the periodic and Dirichlet problems \cite{loeper, obf, mirebeau3d}.  The convergence of the Newton method relies on the invertibility  of the Jacobian of the non-linear scheme. It remains open in  the case of BV2 boundary conditions. We  provide a numerical study  in  {\em Section~\ref{sec:numerics}}  
indicating convergence and that the method  has  linear complexity.

\section{The Monge-Amp\`ere problem} 
\label{sec:ma}

\subsection{Weak solution theory for the Monge-Amp\`ere equation} 
\revision{
  \subsubsection{Some background on Optimal Transport}

  As the optimal transport problem is our main motivation for solving the Monge-Amp\`ere problem \eqref{eq:MA} with BV2 conditions, it also guides the notion of generalized solution we are interested in. 
  Let $\mu$, $\nu$ be Borel probability measures on $\RR^n$. The \textit{Monge problem} consists in finding a Borel map $T:\RR^n\rightarrow \RR^n$, which solves
  \begin{align}
    \label{eq:monge}
    \min_{\{\push{T}{\mu} = \nu \}}  \int_{\RR^n} \| x-T(x) \|^2 \, \, d\mu(x), 
  \end{align}
where $\push{T}{\mu} = \nu$ means that $\mu(T^{-1}(A))=\nu(A)$ for all Borel subset $A$ of $\RR^n$.

Its relaxation, the \textit{Monge-Kantorovich problem}, consists in finding a transport plan $\gamma$ solution to 
\begin{align}
  \label{eq:mongekanto}
  \min_{\gamma \in \Pi(\mu,\nu)} \int_{\RR^n\times \RR^n}|x-y|^2\d\gamma(x,y),
\end{align}
where $\Pi(\mu,\nu)$ is the set of Borel probability measures on $\RR^n\times \RR^n$ with respective marginals $\mu$ and $\nu$, \ie $\gamma(A\times \RR^n)=\mu(A)$ and $\gamma(\RR^n\times B)=\nu(B)$ for all Borel subsets $A,B\subseteq \RR^n$.

The following theorem is a simplified version of~\cite[Th.~2.12]{villani_topics_2003} which summarizes results of Knott-Smith (for the first point) and Brenier~\cite{brenierCPAM} (for the remaining points).

\begin{theorem}[\protect{\cite[Th.~2.12]{villani_topics_2003}}]\label{thm:villani}
  Let $\mu$, $\nu$ be probability measures on $\RR^n$ with compact support.

\begin{enumerate}
  \item $\gamma\in \Pi(\mu,\nu)$ is optimal in~\eqref{eq:mongekanto} if and only if there exists a convex lower semi-continuous function $u$ such that $\supp \gamma\subseteq \graph(\partial u)$.
  \item If $\mu$ is absolutely continuous with respect to the Lebesgue measure, there is a unique optimal transport plan
    $\gamma$ for~\eqref{eq:mongekanto}; it has the form $\gamma=(I\times \nabla u)_\sharp \tmu$, where $u:\RR^n\rightarrow \RR\cup\{+\infty\}$ is convex lower semi-continuous. Moreover, $T\eqdef \nabla u$ is the unique transport map solution to~\eqref{eq:monge}, and 
    \begin{align}
      \label{eq:densite}
      \supp(\nu)=\overline{\nabla u(\supp(u))}.
    \end{align}
  \item If both $\mu$ and $\nu$ are absolutely continuous w.r.t. the Lebesgue measure, then
    \begin{align*}
      \nabla u^*\circ \nabla u(x)&=x \quad \mu-\mbox{a.e.}\ x\in \RR^n,\\
      \nabla u\circ \nabla u^*(y)&=y \quad \nu-\mbox{a.e.}\ y\in \RR^n,
    \end{align*}
   and $\nabla u^*$ is the unique optimal transport map from $\nu$ to $\mu$.
\end{enumerate}
\end{theorem}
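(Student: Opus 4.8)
The statement is classical, so the plan is to reassemble it from Kantorovich duality and Brenier's theorem, citing the structural results it rests on.

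\textbf{Point 1 (Knott--Smith).} First I would invoke Kantorovich duality: since $\mu,\nu$ are compactly supported and $c(x,y)=|x-y|^2$ is continuous, an optimal $\gamma$ for~\eqref{eq:mongekanto} exists together with a pair of $c$-conjugate potentials, and $\gamma$ is concentrated on their contact set. Writing $|x-y|^2=|x|^2-2\langle x,y\rangle+|y|^2$ and absorbing the $|x|^2$ and $|y|^2$ terms into the (fixed) marginals, optimality of $\gamma$ becomes equivalent to $\gamma$ maximizing $\int\langle x,y\rangle\,\d\gamma$, which is in turn equivalent to $\supp\gamma$ being a \emph{cyclically monotone} subset of $\RR^n\times\RR^n$. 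By Rockafellar's theorem a set is cyclically monotone if and only if it lies in the graph of the subdifferential $\partial u$ of a proper convex lower semi-continuous $u$ — with $u$ built explicitly as a supremum of affine functions along finite chains of points of the set. This yields both directions of point~1.

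\textbf{Point 2 (Brenier, $\mu\ll\mathrm{Leb}$).} Take any optimal $\gamma$; by point~1 it is concentrated on $\graph(\partial u)$. On the interior of its domain $u$ is locally Lipschitz, hence differentiable outside a Lebesgue-null set, and the non-differentiability points together with the boundary of the domain form a Lebesgue-null — hence $\mu$-null — set. Thus $\partial u(x)=\{\nabla u(x)\}$ for $\mu$-a.e.\ $x$, so $\gamma=(I\times\nabla u)_\sharp\mu$ and $T\eqdef\nabla u$ transports $\mu$ to $\nu$, solving~\eqref{eq:monge}. For uniqueness of $\gamma$: if $\gamma_1,\gamma_2$ are optimal then so is $\tfrac12(\gamma_1+\gamma_2)$; applying the previous argument to it produces a single $u$ with $\gamma_i=(I\times\nabla u)_\sharp\mu$, whence $\gamma_1=\gamma_2$; uniqueness of $T$ then follows since any transport map induces a transport plan with the same cost. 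Finally $\nu=(\nabla u)_\sharp\mu$ yields the support identity~\eqref{eq:densite}.

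\textbf{Point 3 (both absolutely continuous).} Here I would use the symmetry of the cost: the reversed plan $\check\gamma\eqdef(\nabla u\times I)_\sharp\mu\in\Pi(\nu,\mu)$ has the same transport cost as $\gamma$, hence is optimal from $\nu$ to $\mu$. By Fenchel's identity $y\in\partial u(x)\iff x\in\partial u^*(y)$, so $\check\gamma$ is concentrated on $\graph(\partial u^*)$; applying point~2 with $\mu$ and $\nu$ interchanged (using $\nu\ll\mathrm{Leb}$) shows $\check\gamma=(I\times\nabla u^*)_\sharp\nu$ and that $\nabla u^*$ is the unique optimal map from $\nu$ to $\mu$. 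Comparing the two descriptions of $\gamma$ and $\check\gamma$ gives $\nabla u^*\circ\nabla u(x)=x$ for $\mu$-a.e.\ $x$ and $\nabla u\circ\nabla u^*(y)=y$ for $\nu$-a.e.\ $y$.

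\textbf{Main obstacle.} The genuinely delicate points are (i) the a.e.-differentiability step in point~2 — one must be sure that the exceptional set, \emph{including} points where $\supp\mu$ meets the boundary of $\mathrm{dom}\,u$, is Lebesgue-null; this is exactly where absolute continuity of $\mu$ is indispensable and cannot be weakened; and (ii) Rockafellar's construction of a convex potential out of a cyclically monotone set, which carries the real analytic content, everything else being measure-theoretic bookkeeping (Borel measurability of $\nabla u$, existence of dual maximizers, symmetry of the objective).
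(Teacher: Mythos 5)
The paper does not prove this theorem: it is imported as a (simplified) citation of Th.~2.12 in Villani's \emph{Topics in Optimal Transportation}, summarizing the Knott--Smith and Brenier results, so there is no in-paper argument to compare against. Your sketch correctly reconstructs the standard proof of that cited source --- Kantorovich duality, reduction to cyclical monotonicity and Rockafellar's theorem for point~1, a.e.\ differentiability of the convex potential (including the null boundary of its domain) plus the midpoint trick for uniqueness in point~2, and cost symmetry with the Fenchel identity $y\in\partial u(x)\Leftrightarrow x\in\partial u^*(y)$ for point~3 --- and you rightly single out the two steps carrying the real content.
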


\begin{remark}\label{rem:infty}
  In Theorem~\ref{thm:villani}, second item, $T=\nabla u$ may be regarded as a map $T:\RR^n\to \RR^n$ defined almost everywhere. It is actually the uniqueness of the \textit{restriction of $T$ (or $\nabla u$) to $\supp(\mu)$} which is asserted. It is obviously possible to modify $T$ (or $u$) on $\R^n\setminus \supp(\mu)$ without changing the optimality of $T$. In Section~\ref{sec:minconvex}, we use this fact to choose a solution with a ``good''  behavior outside $\oSC=\supp(\mu)$.
\end{remark}

\begin{remark}\label{rem:subdiff}
  It is convenient to consider~\eqref{eq:densite} in terms of the subdifferential $\partial u$ of $u$. We say that $p\in \partial u(x)\subset \RR^n$ if and only if
  \begin{align}
    \label{eq:subdiff}
    \forall x'\in \RR^n, \quad u(x')\geq u(x)+\dotp{p}{x-x'}.
  \end{align}
  The subdifferential is a multivalued function which extends the gradient in the sense that $u$ is differentiable at $x$ iff $\partial u(x)$ is single-valued, in which case $\partial u(x)=\{\nabla u(x)\}$ (see~\cite{rockafellar1986convex} for more detail). We deduce from~\eqref{eq:densite} that the closure of $\partial u(\RR^n)$ contains $\supp(\nu)$. In general, $\partial u(\RR^n)$ is not closed, but one may show that $\supp(\nu)\subseteq\partial u(\RR^n)$. Indeed, given $y\in \supp(\nu)$, it suffices to consider $x_n\in \supp(\mu)$ such that $\nabla u(x_n)\to y$; by the compactness of $\supp(\mu)$, there is a subsequence of $(x_n)_{n\in\NN}$ which converges to some $x\in \supp(\mu)$, passing to the limit in inequality~\eqref{eq:subdiff} we get $y\in \partial u(x)$.
\end{remark}

\subsubsection{ Brenier Solutions} 
The connection between the Monge-Ampere equation and the optimal transport problem was pointed out formally in~\cite{brenierCPAM}, and stated more precisely in~\cite{mccann_convexity_1997}. The convex function $u$ of Theorem~\ref{thm:villani} has an (Aleksandrov) second derivative at almost every point of the interior of its domain, and if both $\mu$ and $\nu$ are absolutely continuous (with respective densities $f$ and $g$), 
\begin{equation}
\label{push} 
 g(\nabla u(x)) \abs{\det (\nabla^2 u(x))} = {f(x)}{} \mbox{ for $\mu$-a.e. $x\in \R^n$}.
\end{equation}  
Together with~\eqref{eq:densite}, that property tells that $u$ is in some generalized sense a solution to~\eqref{eq:MA}.

This motivates the following definition.
\begin{definition}
  Let $\mu$ and $\nu$ be two probability measures on $\RR^n$ with compact support, such that $\mu$ is absolutely continuous w.r.t.\ the Lebesgue measure. We say that $u:\RR^n\rightarrow \RR\cup\{+\infty\}$ is a \textit{Brenier solution} to~\eqref{eq:MA} if it is convex lower semi-continuous and $\push{(\nabla u)}{\mu}=\nu$.
\end{definition}

}



\subsubsection{Aleksandrov Solutions and Semi-Discrete OT} 
\revision{Aleksandrov solutions are useful to tackle the Semi-Discrete Optimal Transport problem, which corresponds to the situation when one of the two measures is an empirical measure, the other being absolutely continuous.}
\begin{definition}[Aleksandrov solution]\label{def:aleks}
Let $\mu$ and $\nu$ be two compactly supported probability measures on $\R^n$. 
A convex l.s.c. function $u$ is an \emph{Aleksandrov solution} of~\eqref{eq:MA} if for every measurable set $E\in \R^2 $,
\[ \nu(\partial u(E))  = \mu(E). \]
\end{definition}

When $\mu$ \revision{is absolutely continuous w.r.t.\ the Lebesgue measure and $\supp\nu$ is convex, Aleksandrov and Brenier solutions coincide (see~\cite{figaloeper}). 

If $\mu$ is absolutely continuous and the {\em target} measure $\nu$ has only atoms, for instance  $\nu= \sum_{i=1}^{N} g_i \, \delta_{y_i} $ where the $g_i$'s are  positive weights and the $y_i$'s are the locations of Dirac masses in the plane, Brenier and Aleksandrov solution also coincide: the Brenier map is still well defined and maps the source domain onto the finite set $\{y_i\}_{1\leq i\leq N}$.}

Conversely, if  the {\em source} measure is an empirical measure $\mu = \sum_{i=1}^{N} f_i \, \delta_{x_i} $ then it is not 
possible to define a Brenier map,  but the Aleksandrov solution $u$ still makes sense and satisfies 

\begin{equation}\label{eq:AL}
\int_{\partial u(E)} g(y)\, dy  =  f(E) = \sum_{x_i \in E} f_i    
\end{equation} 
or equivalently for all $i$ 
\begin{equation}\label{eq:AL1}
\int_{\partial u(x_i)} g(y)\, dy  =   f_i.
\end{equation} 
The mass concentrated at the Dirac locations is mapped to cells  corresponding to the sub-gradients. 
The Semi-Discrete  numerical approach is based on solving system (\ref{eq:AL1}) using a Newton method and fast computations of the subgradients $\partial u(x_i)$. These subgradients  are known as   Laguerre cells tesselation in computational geometry \cite{merigot , levy2015numerical}.
It should be noted that  $u^*$, its Legendre Fenchel transform is a  Brenier solution for the reverse  mapping  $\nu$ to $\mu$.
For more on the duality properties of Semi-Discrete OT see \cite{bfo}.

\subsection{Regularity} 

Based on   Brenier solutions  Caffarelli has developed a regularity theory for MA. 
In particular, when  $\TG$ is convex,  the following result holds\footnote{\revision{Here we follow the presentation of~\cite[Th. 1.2]{philippis_partial_2014}}}~:

\begin{theorem}[\protect{\cite{caffarelli1992boundary, caffarelli1996boundary}}]\label{thm:regul}   Let $\SC,\TG\subset \R^n$ be two bounded open sets, $f:\SC\rightarrow \R_+$, $g:\TG\rightarrow \R_+$ be two probability densities, bounded away from zero and infinity respectively on $\SC$ and $\TG$, \textit{i.e.}\ 
  \begin{equation}\label{eq:densbounded}
    \exists \lambda>0,\quad \frac{1}{\lambda} \leq f\leq \lambda \ \mbox{a.e. on $\SC$}, \quad \frac{1}{\lambda} \leq g\leq \lambda \ \mbox{a.e. on $\TG$}
  \end{equation}
  and let $T=\nabla u:\SC\rightarrow \TG$ be the optimal transport maps sending $f$ to $g$. If $\TG$ is convex, then
  \begin{enumerate}[(i)]
    \item $T\in C^{0,\alpha}_{loc}(\SC)$ for some $\alpha>0$.
    \item If in addition $f\in C^{k,\beta}_{loc}(\SC)$ and $g\in C^{k,\beta}_{loc}(\TG)$ for some $\beta\in (0,1)$, then $T\in C^{k+1,\beta}_{loc}(\SC)$.
    \item If $f\in C^{k,\beta}_{loc}(\overline{\SC})$, $g\in C^{k,\beta}_{loc}(\overline{\TG})$ and both $X$ and $Y$ are smooth and uniformly convex, then $T:\overline{\SC}\rightarrow \overline{\TG}$ is a global diffeomorphism of class $C^{k+1,\beta}$.
  \end{enumerate}
\end{theorem}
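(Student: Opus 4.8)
The plan is to reduce everything to the convex Brenier potential $u$ furnished by Theorem~\ref{thm:villani}, and then to invoke the geometric Monge--Amp\`ere theory built on it. Since $f$ and $g$ are bounded between $1/\lambda$ and $\lambda$, both $\mu=f\,dx$ and $\nu=g\,dy$ are absolutely continuous, so (as recalled just before the statement) the Brenier solution $u$ is also an Aleksandrov solution: for every measurable $E$ one has $\int_{\partial u(E)}g=\int_E f$, whence $\lambda^{-2}|E|\le|\partial u(E)|\le\lambda^{2}|E|$. In other words the Monge--Amp\`ere measure of $u$ lies between two positive constants, which is the standing hypothesis of Caffarelli's theory; the task is then to extract regularity of $T=\nabla u$ from this.

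The crucial structural step --- the only one genuinely using the convexity of $\TG$ --- is to show that $u$ is \emph{strictly convex} on $\SC$, i.e.\ its graph contains no segment lying over $\SC$. I would argue by contradiction, following Caffarelli: suppose a supporting affine function $\ell$ touches $u$ along a set $\Sigma$ containing a nondegenerate segment through an interior point. Analysing the relative boundary of $\Sigma$, one uses that $\partial u(\Sigma)$ has zero Lebesgue measure while it must still cover a positive portion of $\supp\nu=\overline{\TG}$; here convexity of $\TG$ controls where the extremal subgradients sitting over $\partial\Sigma$ can land, and one contradicts the volume inequality $|\partial u(E)|\le\lambda^{2}|E|$ applied to thin neighbourhoods of the segment. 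Convexity of the target is indispensable here: otherwise $u$ may legitimately develop segments and $T=\nabla u$ jumps.

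With strict convexity in hand, for $x_0\in\SC$ and $p\in\partial u(x_0)$ the sections $S_h(x_0)=\{x:u(x)<u(x_0)+\dotp{p}{x-x_0}+h\}$ are, for small $h$, compactly contained in a fixed neighbourhood and shrink to $\{x_0\}$ as $h\to0$. By John's lemma each $S_h(x_0)$ is sandwiched between concentric ellipsoids, and the two-sided Monge--Amp\`ere bound together with the Aleksandrov maximum principle forces $|S_h(x_0)|$ to be comparable to $h^{n/2}$. Caffarelli's renormalization argument then yields the \emph{engulfing property} of sections and, crucially, a uniform bound on the eccentricity of the normalizing ellipsoids; this is exactly what upgrades $u$ to $C^{1,\alpha}_{loc}$, i.e.\ $T=\nabla u\in C^{0,\alpha}_{loc}(\SC)$, proving (i). For (ii), once $\nabla u$ is locally H\"older and nondegenerate the right-hand side $x\mapsto f(x)/g(\nabla u(x))$ inherits the $C^{k,\beta}_{loc}$ regularity of $f$ and $g$, so I would feed it into Caffarelli's interior $W^{2,p}$ estimates and the Schauder theory for Monge--Amp\`ere and bootstrap: $\nabla u\in C^{0,\alpha}\Rightarrow u\in C^{2,\beta}_{loc}\Rightarrow f/(g\circ\nabla u)\in C^{1,\beta}_{loc}\Rightarrow u\in C^{3,\beta}_{loc}$, and so on, giving $T\in C^{k+1,\beta}_{loc}(\SC)$.

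Finally, (iii) --- global regularity up to the boundary --- is the hard part, and is precisely the content of Caffarelli's boundary-regularity papers. The smoothness and uniform convexity of both $\SC$ and $\TG$, together with the obliqueness implicit in $\nabla u(\overline{\SC})=\overline{\TG}$, let one prove a boundary analogue of the strict-convexity lemma and show that sections centred at boundary points are again comparable to ellipsoids (half-ellipsoids abutting $\partial\SC$) of controlled eccentricity. This gives $C^{1,\alpha}$ regularity of $T$ up to $\overline{\SC}$; combining it with Pogorelov-type second-derivative estimates and the boundary Schauder theory for Monge--Amp\`ere promotes this to a global $C^{k+1,\beta}$ diffeomorphism, the inverse being $\nabla u^{*}$ with symmetric estimates by Theorem~\ref{thm:villani}(3). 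Throughout, the main obstacle is the same: passing from ``$S_h$ is comparable to \emph{some} ellipsoid'' to a \emph{uniform} control of that ellipsoid's shape --- Caffarelli's affine-invariance/renormalization machinery --- first in the interior and then in its considerably more delicate boundary version.
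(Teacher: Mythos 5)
This theorem is not proved in the paper: it is imported verbatim from the literature (Caffarelli's interior and boundary regularity papers, quoted here following De Philippis--Figalli), so there is no in-paper argument to compare yours against. What you have written is a correct and reasonably faithful \emph{outline} of how the cited proof actually goes: the reduction to a Monge--Amp\`ere measure pinched between $\lambda^{-2}$ and $\lambda^{2}$ via the Aleksandrov formulation, strict convexity of $u$ as the one step where convexity of $\TG$ enters (and the correct intuition that without it $\nabla u$ may genuinely jump), sections, John's lemma, the engulfing/renormalization machinery giving $C^{1,\alpha}_{loc}$, the Schauder bootstrap for (ii), and the boundary version plus $\nabla u^{*}$ for (iii). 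Two caveats. First, everything after ``strict convexity'' is invoked rather than proved --- the strict convexity lemma, the volume estimate $|S_h|\sim h^{n/2}$, the engulfing property, and especially the boundary analogue in (iii) are each substantial theorems, so this is a roadmap to Caffarelli's papers, not a self-contained proof; that is an appropriate level of detail for a cited result, but it should be presented as such. Second, in the bootstrap for (ii) be careful with the apparent circularity: from $\nabla u\in C^{0,\alpha}_{loc}$ and $g\in C^{0,\beta}_{loc}$ one only gets $f/(g\circ\nabla u)\in C^{0,\alpha\beta}_{loc}$ at the first pass, so Caffarelli's interior $C^{2,\alpha}$ estimate first yields $u\in C^{2,\alpha\beta}_{loc}$, which makes $\nabla u$ locally Lipschitz and only then upgrades the right-hand side to the full $C^{k,\beta}_{loc}$ class before iterating; your chain $\nabla u\in C^{0,\alpha}\Rightarrow u\in C^{2,\beta}_{loc}$ skips this intermediate step.
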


In the following, we shall assume that $f$ and $g$ are bounded away from zero and infinity. 
The first conclusion of the theorem thus implies that $u\in C^{1,\alpha}_{loc}(\SC)$. \\

 We finally  recall a slightly more general  regularity result due to Figalli and Loeper wich does not require 
 lower bounds on the source density $f$ and which holds in the plane ($n=2$)~:
\begin{theorem}[\protect{\cite[Theorem 2.1]{figaloeper}}]\label{thm:figallireg}
 Let $\SC,\TG\subset \R^2$ be two bounded open sets, $\TG$ convex, $f:\SC\rightarrow \R_+$, $g:\TG\rightarrow \R_+$ be two probability densities, such that there exist $\lambda >0$ with 
 $ f \le \frac{1}{\lambda}$ in $\SC$ and $\lambda  \le g$ in $\TG$. 
 Let $u:\R^2\rightarrow \R$ be a Brenier solution \revision{to~\eqref{eq:MA}
such that $\partial u(\RR^2)=\oTG$.}

Then $u\in \Cder^1(\R^2)$.
 \end{theorem}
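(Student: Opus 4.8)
The plan is to argue by contradiction from the failure of differentiability. Since $u$ is finite and convex on all of $\R^2$, it belongs to $\Cder^1(\R^2)$ if and only if $\partial u(x)$ is a singleton for every $x\in\R^2$; so suppose $\partial u(x_0)$ contains two distinct points $q_0\neq q_1$ for some $x_0$. The first move is a normalization: translating in $x$ and subtracting from $u$ the affine function $x\mapsto u(x_0)+\dotp{q_0}{x-x_0}$ only translates $\SC$ and $\oTG$ (the latter stays open, bounded and convex), leaves $f$, $g$ and all hypotheses untouched, and is irrelevant to the $\Cder^1$ question; hence we may assume $x_0=0$, $u(0)=0$, $0\in\partial u(0)$, and $q\eqdef q_1-q_0\in\partial u(0)$ with $q\neq 0$. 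By convexity the segment $S\eqdef[0,q]$ lies in $\partial u(0)$, and since $\partial u(\R^2)=\oTG$ we get $S\subseteq\oTG$. Moreover, as $\mu$ is absolutely continuous and $\supp\nu=\oTG$ is convex, the Brenier solution $u$ is also an Aleksandrov solution (as recalled above), so $\nu(\partial u(E))=\mu(E)$ for every Borel $E\subseteq\R^2$; using $\lambda\leq g$ on $\TG$, $f\leq 1/\lambda$ on $\SC$ and $\abs{\partial\TG}=0$, this gives $\abs{\partial u(E)}\leq\lambda^{-2}\abs{E}$ for every Borel $E$, i.e.\ the Monge--Amp\`ere measure of $u$ is absolutely continuous with density at most $\lambda^{-2}$.

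The easy step is then to rule out $\partial u(0)$ having non-empty interior: by the previous bound $\abs{\partial u(\{0\})}\leq\lambda^{-2}\abs{\{0\}}=0$ (equivalently $0=\mu(\{0\})=\nu(\partial u(0))\geq\lambda\,\abs{\partial u(0)\cap\TG}$), so $\partial u(0)$ has zero area; being non-degenerate, it is a genuine segment, and after a rotation we take $S=\partial u(0)=[0,q]$ with $q=q_1e_1$, $q_1>0$, so that $u(x)\geq\max(0,q_1x_1)$ with equality at $x=0$. Passing to the Legendre transform, a one-line computation gives $u^*\equiv 0$ on $S$ while $u^*\geq 0$ everywhere; thus $u^*$ is affine on the non-degenerate segment $S\subseteq\oTG$, and $S=\{u^*=0\}$ is precisely the set on which $u^*$ coincides with its supporting affine function (equivalently, by Fenchel duality, $S=\partial u(0)$ is the set of minimizers of $u^*$). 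So $u^*$ is affine along $S$ but strictly convex transverse to $S$ at relative-interior points: the graph of $u$ carries a genuine crest over $S$.

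Deriving a contradiction from the existence of this crest is the crux, and it is here that the planar hypothesis is genuinely used (Caffarelli's two-sided strict-convexity and $\Cder^1$ estimates are not available here, since only $g$ is bounded below and only $f$ from above). The facts I would play against the crest are: (i) $\partial u(\R^2)=\oTG$ is a bounded convex body with non-empty interior; (ii) the Monge--Amp\`ere measure of $u$ has no atoms and is $\leq\lambda^{-2}\,$Lebesgue; and (iii) the non-differentiability set $\Sigma$ of $u$ is $\mu$-null, so $\nu(\partial u(\Sigma))=\mu(\Sigma)=0$ and the bulk of $\oTG$ must be covered, with full $\nu$-mass, by the subgradient of the set where $u$ is differentiable. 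The strategy is to follow the maximal crest of $u$ — analysing $\partial u$ of thin neighbourhoods of the crest and, above all, the geometry at its endpoints, where the normal cones of the two flat pieces of the graph of $u$ meeting along the crest interact with the convexity of $\oTG$ — and to show that in the plane no such crest can coexist with (i)--(iii). I expect making this endpoint/thin-neighbourhood analysis quantitative to be the main obstacle; it is the technical heart of~\cite{figaloeper}. Once the contradiction is obtained, $\partial u$ is everywhere single-valued, that is $u\in\Cder^1(\R^2)$.
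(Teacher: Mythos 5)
First, note that the paper does not prove this statement at all: Theorem~\ref{thm:figallireg} is quoted verbatim from Figalli--Loeper (\cite[Theorem 2.1]{figaloeper}) and used as a black box, so there is no internal proof to measure you against. Judged as a standalone argument, your reduction is correct and standard: the normalization, the fact that $\nu(\partial u(E))=\mu(E)$ for a Brenier solution with absolutely continuous target (so that the Monge--Amp\`ere measure of $u$ is $\leq \lambda^{-2}\,\mathcal{L}^2$, whence $\abs{\partial u(0)}=0$ and $\partial u(0)$ is a genuine segment $S=[0,q]$), and the Legendre-dual picture $S=\{u^*=0\}=\argmin u^*$ are all sound. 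One small caution: you invoke ``Brenier $=$ Aleksandrov when the target support is convex'' by citing the very paper whose theorem you are proving; the direction you actually need ($\nu(\partial u(E))\leq\mu(E)$, hence the upper bound on the Monge--Amp\`ere measure) follows from the elementary fact that the set of slopes shared by two distinct points is Lebesgue-null, so this is not circular, but you should say so rather than lean on the cited equivalence.

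The genuine gap is that the proof stops exactly where the theorem begins. Everything up to ``$u^*$ has a non-degenerate flat minimum set $S\subseteq\oTG$'' is soft convex analysis valid in any dimension and under much weaker hypotheses; the content of the theorem is precisely that this configuration is impossible in the plane under (i)--(iii), and for that you offer only a strategy (``follow the maximal crest\ldots analyse thin neighbourhoods and endpoints'') while explicitly deferring the quantitative estimate to~\cite{figaloeper}. The missing ingredient is the two-dimensional Aleksandrov-type section estimate: one looks at the sublevel sets $S_\varepsilon=\{y:u^*(y)\leq\varepsilon\}$, which shrink to the segment $S$, and shows that in dimension two the measure $\abs{\partial u^*(S_\varepsilon)}$ (equivalently, via the bound $\abs{\partial u(E)}\leq\lambda^{-2}\abs{E}$, the quantity $\abs{S_\varepsilon}$) cannot decay at the rate forced by the flatness of $u^*$ along $S$ together with the convexity of $\oTG$ at the endpoints of $S$; it is exactly here that $n=2$ and the one-sided bounds $f\leq 1/\lambda$, $g\geq\lambda$ enter, and without this estimate no contradiction has been derived. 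As written, the proposal is a correct reduction plus an appeal to the reference for the theorem's actual content, so it cannot be accepted as a proof.
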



\section{Minimal Brenier Solutions in $\RR^2$}\label{sec:minconvex}
\label{sec:cvxext}
 
 \subsection{Definition}
From now on, we consider $\SC\subset \R^2$, $\TG\subset \R^2$ two bounded open sets, and we assume that $\TG$ is convex.
We assume that the probability densities $\dsc$, $\dtg$ are bounded away from zero and infinity respectively on $\SC$ and $\TG$ (see~\eqref{eq:densbounded}). \revision{We note in the following Proposition that the property $\partial u(\RR^2)=\oTG$ assumed by Theorem~\ref{thm:figallireg} allows to single out a particular Brenier solution to the Monge-Ampère problem.
}

\begin{proposition}\label{prop:defue}
  Assume that $\TG$ is convex. Then there is a unique (up to an additive constant) convex lower semi-continuous function $\ue:\R^2\rightarrow \RR\cup\{+\infty\}$ which is a Brenier solution to~\eqref{eq:MA} and which satisfies $\partial \ue(\R^2)= \overline{\TG}$. 

  Moreover, $\ue\in \Cder^1(\RR^2)$.
\end{proposition}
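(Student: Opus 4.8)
The plan is to construct $\ue$ by modifying an arbitrary Brenier solution outside the source support, and then invoke Theorem~\ref{thm:figallireg} for the regularity and a convexity argument for the uniqueness. First I would take the probability measures $\mu$ with density $f$ on $\SC$ and $\nu$ with density $g$ on $\TG$, apply Theorem~\ref{thm:villani} to obtain a convex l.s.c. function $u_0$ with $\push{(\nabla u_0)}{\mu}=\nu$, and recall (Remark~\ref{rem:subdiff}) that $\overline{\TG}=\supp(\nu)\subseteq \partial u_0(\R^2)$. The issue is only that $\partial u_0(\R^2)$ may be strictly larger than $\overline{\TG}$: gradients outside $\supp(\mu)$ can land outside $\overline{\TG}$. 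To fix this, I would project: since $\TG$ is convex and bounded, $\overline{\TG}$ is a compact convex set, and I would define $\ue$ as the largest convex l.s.c.\ minorant (or equivalently, use the convex-conjugate construction) of $u_0$ whose subdifferential stays in $\overline{\TG}$ — concretely, set $h$ to be the support function of $\overline{\TG}$, i.e. $h(p)=+\infty$ if $p\notin\overline{\TG}$, and take $\ue \eqdef (u_0^* + \chi_{\overline{\TG}})^*$ where $\chi_{\overline{\TG}}$ is the convex indicator; this "clips" the gradient range to $\overline{\TG}$ while leaving $u_0$ unchanged on $\supp(\mu)$ up to the a.e.\ ambiguity noted in Remark~\ref{rem:infty}.

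Next I would verify the three required properties of this $\ue$. Convexity and lower semicontinuity are automatic from the double-conjugate construction. For the pushforward property $\push{(\nabla \ue)}{\mu}=\nu$: one checks that $\nabla \ue = \nabla u_0$ $\mu$-a.e., because on $\supp(\mu)$ the gradient $\nabla u_0(x)$ already lies in $\supp(\nu)=\overline{\TG}$ (by~\eqref{eq:densite}, the gradients of $u_0$ at points of $\supp\mu$ fill a dense subset of $\supp\nu$, and more carefully one argues the a.e.\ gradient is in $\overline{\TG}$), so the clipping operation does not alter $u_0$ there; hence the transport map is unchanged and still pushes $\mu$ to $\nu$. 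For the range condition $\partial \ue(\R^2)=\overline{\TG}$: the conjugate construction forces $\partial \ue(\R^2)\subseteq \dom(\partial h)\subseteq \overline{\TG}$, while the reverse inclusion $\overline{\TG}=\supp(\nu)\subseteq \partial \ue(\R^2)$ follows from the pushforward property exactly as in Remark~\ref{rem:subdiff} (take $y\in\overline{\TG}$, approximate by $\nabla\ue(x_n)$ with $x_n\in\supp\mu$, use compactness of $\supp\mu$ and pass to the limit in the subgradient inequality~\eqref{eq:subdiff}).

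For the regularity claim $\ue\in\Cder^1(\R^2)$, I would simply cite Theorem~\ref{thm:figallireg}: we are in $\R^2$, $\TG$ is a bounded open convex set, $f$ is bounded above ($f\leq \lambda$) and $g$ is bounded below ($g\geq 1/\lambda$) by our standing hypothesis~\eqref{eq:densbounded}, $\ue$ is a Brenier solution, and $\partial \ue(\R^2)=\overline{\TG}=\oTG$, so all hypotheses are met and $\ue\in\Cder^1(\R^2)$ follows.

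Finally, for uniqueness up to an additive constant, suppose $u_1,u_2$ are two Brenier solutions with $\partial u_i(\R^2)=\overline{\TG}$. By the regularity just established both are $\Cder^1(\R^2)$, so $\nabla u_1,\nabla u_2$ are genuine continuous maps $\R^2\to\overline{\TG}$ with $\push{(\nabla u_i)}{\mu}=\nu$. On $\SC=\supp(\mu)$ (where $\mu$ has positive density a.e.), Theorem~\ref{thm:villani}(ii) gives uniqueness of the optimal transport map, so $\nabla u_1=\nabla u_2$ $\mu$-a.e., hence on a dense subset of $\SC$ and, by continuity, everywhere on $\overline{\SC}$. To propagate equality of the gradients to all of $\R^2$, I would use that $u_i^*$ is a $\Cder^1$ Brenier solution of the reverse problem from $\nu$ to $\mu$ (Theorem~\ref{thm:villani}(iii) and the semi-discrete duality discussion): since $\supp(\nu)=\overline{\TG}$ and $\nabla u_i^*$ is the unique optimal map from $\nu$ to $\mu$, $\nabla u_1^*=\nabla u_2^*$ on $\overline{\TG}$; then for any $p\in\overline{\TG}$ and $x=\nabla u_i^*(p)$ we have $p\in\partial u_i(x)$, and since $\partial u_i(\R^2)=\overline{\TG}$ this pins down $\nabla u_i$ on all of $\R^2$ by the inversion relations between $\partial u_i$ and $\partial u_i^*$. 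Thus $\nabla u_1\equiv\nabla u_2$ on $\R^2$, and since $\R^2$ is connected, $u_1-u_2$ is constant. I expect the main obstacle to be the last step — carefully justifying that the range condition $\partial u_i(\R^2)=\overline{\TG}$, combined with uniqueness of the transport map only \emph{on $\supp(\mu)$}, is enough to determine $\nabla u_i$ \emph{everywhere} — since this is precisely the point where the "minimality"/range normalization does the work of removing the ambiguity highlighted in Remark~\ref{rem:infty}; the conjugate-function bookkeeping (that $\ue$ genuinely agrees with $u_0$ on $\supp\mu$ after clipping) also needs a clean argument.
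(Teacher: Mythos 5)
Your construction of $\ue$ and the verification that it is a Brenier solution with $\partial\ue(\RR^2)=\oTG$, followed by the appeal to Theorem~\ref{thm:figallireg} for the $\Cder^1$ regularity, is exactly the paper's argument: the paper also sets $\ue=(u_0^*+\chi_{\oTG})^*$ (written as~\eqref{eq:defue}), checks $\ue=u_0$ on $\SC$, and deduces the range condition from Remark~\ref{rem:subdiff}. (Minor slip: the support function of $\oTG$ is finite everywhere; what you actually use is the indicator $\chi_{\oTG}$, and the substance of that step is fine.)

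The uniqueness part, however, has a genuine gap at precisely the step you flag as the main obstacle. From Theorem~\ref{thm:villani} you correctly get $\nabla u_1^*=\nabla u_2^*$ $\nu$-a.e., hence $u_1^*=u_2^*+C$ on the open connected set $\TG$, and $\nabla u_1=\nabla u_2$ on $\oSC$. But the proposed propagation to all of $\RR^2$ via ``inversion relations'' does not go through as stated: for $x$ outside $\cob(\SC)$ you know $p_i\eqdef\nabla u_i(x)\in\oTG$, i.e.\ $x\in\partial u_i^*(p_i)$, but to conclude $p_1=p_2$ you would need $\partial u_1^*(p)$ and $\partial u_2^*(p)$ to coincide \emph{as sets} for every $p\in\oTG$, including points of $\partial\TG$; knowing that these sets share the single point $\nabla u_1^*(p)=\nabla u_2^*(p)$ for $\nu$-a.e.\ $p$ is not enough. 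The subdifferential of $u_i^*$ at a point of $\partial\TG$ depends on the values of $u_i^*$ \emph{outside} $\oTG$, and a priori $\dom u_i^*$ could be strictly larger than $\oTG$, in which case $u_1^*$ and $u_2^*$ could differ by more than a constant there. The missing ingredient --- and the only place where the inclusion $\partial u_i(\RR^2)\subseteq\oTG$ (as opposed to $\supseteq$) actually does any work --- is the paper's observation that $\rint(\dom u_i^*)\subseteq\dom(\partial u_i^*)=\partial u_i(\RR^2)=\oTG$, hence $\dom u_i^*\subseteq\oTG$; once $u_i^*\equiv+\infty$ off $\oTG$, the identity $u_1^*=u_2^*+C$ on $\oTG$ (extended from $\TG$ to $\partial\TG$ by a short semicontinuity-and-convexity argument) yields $u_1=u_1^{**}=u_2^{**}-C=u_2-C$ everywhere by biconjugation, with no need to track gradients at all. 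A further minor point: you cannot assert that $u_i^*$ is $\Cder^1$, since $\SC$ is not assumed convex and neither Theorem~\ref{thm:regul} nor Theorem~\ref{thm:figallireg} applies to the reverse transport; fortunately the paper's route does not need this.
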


\begin{remark}[Uniqueness] \label{rem:unique} 
As the Brenier map is unique, note that uniqueness of the potential $\ue$ up to a constant carries over 
to the other notions of solution recalled in Section~\ref{sec:ma}.
\end{remark}

\begin{proof}

We first prove uniqueness. Let $u_1$, $u_2$ be two such functions. Then the gradients of their Legendre-Fenchel conjugates $u_1^*$ and $u_2^*$ solve the optimal transport problem from $\nu$ to $\mu$, and by Theorem~\ref{thm:villani}, $\nabla u_1^*(y)=\nabla u_2^*(y)$ for a.e.\ $y\in \oTG$. As a result, there is some $C\in \RR$ such that those two convex functions satisfy $u_1^*(y)=u_2^*(y)+C$ for all $y\in \TG$.
Let us prove that the equality also holds on $\partial \TG$. Let $y_0\in \partial \TG$, $y_1\in\TG$. By lower semi-continuity and convexity
\begin{align*}
  u_1^*(y_0)&\leq \liminf_{\substack{y\to y_0\\y\in\TG}}u_1^*(y)\\
            &=\liminf_{\substack{y\to y_0\\y\in\TG}}\left(u_2^*(y)+C\right)\\
            &\leq \liminf_{t\to 0^+} \left((1-t)u_2^*(y_0)+tu_2^*(y_1)\right)+C =u_2^*(y_0)+C.
\end{align*}
The converse inequality is obtained by swapping the role of $u_1^*$ and $u_2^*$, and the equality is proved.

Now, since $u_1$, $u_2$ are proper convex lower semi-continuous, we know from~\cite{rockafellar1986convex} that
\revision{%
\begin{equation*}
 \forall i\in\{1,2\},\quad\qquad  \rint \left(\dom u_i^*\right)\subset  \dom \left(\partial u_i^*\right),
 \end{equation*}
where $\rint(\dom u_i^*)$ refers to the relative interior of $\dom u_i^*$, the effective domain of $u_i^*$ (see~\cite{rockafellar1986convex}).
 Since $\TG\subset \dom u_i^*$ is nonempty open, we get
 \begin{equation*}
\TG\subseteq \inte\left(\dom u_i^*\right)= \rint \left(\dom u_i^*\right) \subset  \dom \left(\partial u_i^*\right) \subseteq \oTG,
 \end{equation*}
 hence $\dom u_i^*\subseteq \oTG$.} The double conjugate reconstruction formula then yields
 \begin{equation*}
   \forall x\in \RR^2,\quad   u_1(x)=\sup_{y\in \oTG}\left(\dotp{y}{x}-u_1^*(y)\right) = \sup_{y\in \oTG}\left( \dotp{y}{x}-(u_2^*(y)+C)\right) =u_2(x)-C.
 \end{equation*}

To prove the existence of $\ue$, let $u_0$ be a Brenier solution to the Monge-Ampère problem~\eqref{eq:MA}.
We define
\begin{equation}\label{eq:defue}
\forall x\in \R^2,\quad  \ue(x)\eqdef \sup_{y\in \overline{\TG}} \left\{\dotp{x}{y}-u_0^*(y)\right\},
\end{equation}
where $u_0^*$ is the Legendre-Fenchel conjugate of $u_0$.
\begin{equation}
  \forall y\in \overline{\TG},\quad  u_0^*(y)\eqdef \sup_{x\in \R^2} \{\dotp{x}{y} - u_0(x)\}.
\end{equation}
It is immediate that $\ue$ is convex lower semi-continuous.
From Theorem~\ref{thm:regul}, we know that $u_0$ is a convex function which is $\Cder^{1,\alpha}_{loc}$ in $\SC$. Moreover, $u_0$ is continuous up to $\partial\SC$ since, for all $x\in \SC$, $\nabla u_0(x)\in \oTG$ which is a bounded set.
As the supremum of a (finite) upper semi-continuous function on the compact set $\oTG$, $\ue$ is finite on $\R^2$.

Now, we prove that $\ue(x)=u_0(x)$ for all $x\in \SC$. Since $u_0$ is proper convex l.s.c., \revision{for a.e.\ $x\in \SC$,}
\begin{equation*}
  u_0(x)=\dotp{x}{\nabla u_0(x)}-u_0^*(\nabla u_0(x)) = \sup_{y\in \RR^2}\left(\dotp{x}{y}-u_0^*(y) \right)\geq \sup_{y\in \oTG}\left(\dotp{x}{y}-u_0^*(y) \right)=\ue(x).
\end{equation*}
But since $\nabla u_0(x)\in \oTG$, $\dotp{x}{\nabla u_0(x)}-u_0^*(\nabla u_0(x))\leq \sup_{y\in \oTG}\left(\dotp{x}{y}-u_0^*(y)\right)$, and the above inequality is in fact an equality. \revision{That equality holds almost everywhere in the open set $\SC$ hence in fact everywhere.}

To prove that $\partial \ue(\RR^2)\subset \oTG$, let $x\in \RR^2$, $p\in \partial\ue(x)$.
\revision{\begin{align*}
    \mbox{Since}\quad  \forall h\in \RR^2\quad \ue(x)+\dotp{h}{p}&\leq \ue(x+h) ,\\
  \mbox{we get}\quad 
  \sup_{y'\in\oTG}\left(\dotp{x}{y'}-u_0^*(y')\right) +\dotp{h}{p} &\leq \sup_{y'\in\oTG}\left(\dotp{x+h}{y'}-u_0^*(y')\right)\\
                                                                   &\leq   \sup_{y'\in\oTG}\left(\dotp{x}{y'}-u_0^*(y')\right)+\sup_{y\in\oTG}\dotp{h}{y}.
\end{align*} }
As a result, \revision{$\dotp{h}{p}\leq \sup_{y\in\oTG}\dotp{h}{y}$} for all $h\in \RR^2$, hence $p\in \oTG$, and $\partial \ue(\RR^2)\subset \oTG$.

We conclude that $\ue$ is also a Brenier solution to~\eqref{eq:MA}, hence by Remark~\ref{rem:subdiff}, $\partial \ue(\RR^2)=\oTG$, and by Theorem~\ref{thm:figallireg}, we deduce that $\ue$ is $\Cder^1(\R^2)$.
\end{proof}

\begin{remark}
  From~\eqref{eq:defue}, one may observe that $\ue(x)\leq \uz(x)$ for all $x\in \RR^2$. It is  the  \textit{minimal convex extension} of $\uz$ outside $\SC$ in the sense that it is the smallest convex function defined on $\RR^2$ which coincides with $\uz$ on $\SC$. Additionally, it is minimal among all Brenier solutions in the sense that its subdifferential is the smallest possible. \revision{Indeed, by Remark~\ref{rem:subdiff}, any Brenier solution satisfies $\partial u(\RR^2)\supseteq \oTG$.}
\end{remark}


\subsection{The affine ray property}
The aim of this section is to give some insight on the behavior of $\ue$ outside $\SC$, which helps motivate the discrete scheme of Section~\ref{sec:discretize}. In the following, $\cob(\SC)$ denotes the closed convex hull of $\SC$.
\begin{proposition}\label{prop:affine} 
The function $\ue$ has the following properties.
\begin{enumerate}[(i)]
  \item For all $x\in \R^2\setminus\cob(\SC)$, there exists $x_0\in   \partial \cob({\SC})  $ 
  such that    $\ue$ is affine on the half-line $\enscond{x_0+t(x-x_0)}{t\geq 0}$. 
Moreover, $\nabla\ue(x)\in\argmax_{y\in\oTG}\dotp{y}{x-x_0}\subset \partial\TG$.

\item For all $x\in \cob(\SC)\setminus{\oSC}$, there exists $x_0\in \partial \SC$ such that $\ue$ is affine on the line segment $[x,x_0]$.
\end{enumerate}
\end{proposition}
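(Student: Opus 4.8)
The plan rests on three properties of $\ue$: it is convex and $\Cder^1(\RR^2)$ (Proposition~\ref{prop:defue}); one has $\nabla\ue(\oSC)=\oTG$ (since $\overline{\nabla\uz(\SC)}=\oTG$ by~\eqref{eq:densite} and $\oSC$ is compact); and, being the minimal convex extension of $\uz|_\SC$,
\[
  \ue(w)=\sup_{x'\in\SC}\bigl(\uz(x')+\dotp{\nabla\uz(x')}{w-x'}\bigr)=\sup_{x'\in\oSC}\bigl(\ue(x')+\dotp{\nabla\ue(x')}{w-x'}\bigr),
\]
where the last supremum, over the compact set $\oSC$ of a continuous function (as $\ue\in\Cder^1$), is attained, say at $x_w\in\oSC$. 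Then the affine map $\ell(w')=\ue(x_w)+\dotp{\nabla\ue(x_w)}{w'-x_w}$ satisfies $\ell\le\ue$ (because $\nabla\ue(x_w)\in\partial\ue(x_w)$) and equals $\ue$ at both $x_w$ and $w$, so by convexity $\ue=\ell$ on $[x_w,w]$; in particular $\nabla\ue(x_w)=\nabla\ue(w)$ and $\ue$ is affine on $[x_w,w]$. This immediately yields (ii): for $x\in\cob(\SC)\setminus\oSC$ the segment $[x_w,x]$ leaves $\oSC$, and taking $x_0$ to be the point of $[x_w,x]\cap\oSC$ closest to $x$ gives $x_0\in\partial\SC$, $x_0\ne x$, and $\ue$ affine on $[x,x_0]\subseteq[x_w,x]$.

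For (i), let $x\in\RR^2\setminus\cob(\SC)$ and $p:=\nabla\ue(x)$. With $z:=x_w\in\oSC$ as above, $\ue$ is affine with slope $p$ on $[z,x]$, hence on the closed convex set $F_p:=\{w:\nabla\ue(w)=p\}=\partial\ue^*(p)$. Let $x_0$ be the endpoint, on the side of $x$, of the segment $\cob(\SC)\cap L$, where $L$ is the line through $z$ and $x$; then $x_0\in\partial\cob(\SC)$, $x_0\in[z,x]\subseteq F_p$ (so $\nabla\ue(x_0)=p$), $x_0\ne x$, and the open ray $\{x_0+td:t>0\}$, $d:=x-x_0$, misses $\cob(\SC)$. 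Writing $\psi(t):=\ue(x_0+td)$, the map $\psi$ is convex and $C^1$, $\psi'$ is nondecreasing, $\psi'\le\sigma_{\oTG}(d):=\max_{y\in\oTG}\dotp{y}{d}$ (since $\nabla\ue\in\oTG$), and $\psi'(t)\to\sigma_{\oTG}(d)$ as $t\to\infty$ (the recession function of $\ue$ is the support function of $\dom\ue^*=\oTG$). Consequently the two conclusions of (i) — $\ue$ affine on the whole ray, and $\nabla\ue(x)=p\in\argmax_{y\in\oTG}\dotp{y}{d}$ (which lies in $\partial\TG$ because $d\ne0$ and $\oTG$ has nonempty interior) — are both equivalent to the single assertion $\dotp{p}{d}=\sigma_{\oTG}(d)$, i.e.\ $d\in N_{\oTG}(p)$, the normal cone of $\oTG$ at $p$. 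Indeed the recession cone of $\partial\ue^*(p)$ equals $N_{\dom\ue^*}(p)=N_{\oTG}(p)$, so $d\in N_{\oTG}(p)$ together with $x_0\in F_p$ forces $x_0+\RR_+d\subseteq F_p$, on which $\ue$ is affine with constant gradient $p$.

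The crux, and the step I expect to fight with, is therefore $d\in N_{\oTG}(p)$. Its meaning is the obliqueness of the second boundary value condition~\eqref{eq:BV2}: $\nabla\ue$ must send an outward normal direction of $\cob(\SC)$ at $x_0$ to an outward normal direction of $\oTG$ at $p=\nabla\ue(x_0)$ — precisely the local boundary condition of~\cite{bfo} in the present framework. I would establish it in two stages. First, the reverse map: since $\mu,\nu$ are both absolutely continuous, $\nabla\ue^*$ is the optimal transport map from $\nu$ to $\mu$ (Theorem~\ref{thm:villani}), so $\nabla\ue^*(y)\in\oSC$ for $\nu$-a.e.\ $y$; as $\ue^*$ is finite, hence locally Lipschitz, on the open set $\TG$, its subdifferential there is the closed convex hull of nearby gradients — which lie in $\oSC$ up to a null set — whence $\partial\ue^*(y)\subseteq\cob(\SC)$ for every $y\in\TG$. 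Dualising, $w\notin\cob(\SC)\Rightarrow\nabla\ue(w)\in\partial\TG$; in particular $p\in\partial\TG$, so $N_{\oTG}(p)\ne\{0\}$ and $F_p$ is unbounded with pointed recession cone $N_{\oTG}(p)$. Second, one locates the compact part of $F_p$: from $p\in\partial\TG$ and $\nabla\uz(\SC)\subseteq\TG$ (Theorem~\ref{thm:regul}) one gets $F_p\cap\SC=\varnothing$, and combined with the first stage this shows $F_p$ protrudes from $\cob(\SC)$ only in directions of $N_{\oTG}(p)$ (its extreme points lie in $\cob(\SC)$), so $x=\kappa+v$ with $\kappa\in F_p\cap\cob(\SC)$ and $v\in N_{\oTG}(p)$. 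Choosing the $x_0$ of the previous paragraph on the segment $[\kappa,x]\subseteq F_p$ then makes $d=x-x_0$ a positive multiple of $v$, giving $d\in N_{\oTG}(p)$ and hence (i). The delicate point throughout is the passage from the $\nu$-a.e.\ statement about $\nabla\ue^*$ to the everywhere statement $\partial\ue^*(y)\subseteq\cob(\SC)$ on $\TG$, and then to the control of the extreme points of $F_p$; this is where the convex-analytic care (and, if needed, the strict convexity of $\uz$ on $\SC$) has to be spent.
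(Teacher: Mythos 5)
Your proof of (ii), and your reduction of (i) to the single claim $d\in\Nn_{\oTG}(p)$ with $d=x-x_0$ and $p=\nabla\ue(x)=\nabla\ue(x_0)$, are sound, and your route is genuinely different from the paper's. The paper redefines $\uz\equiv+\infty$ outside $\cob(\SC)$ (harmless for a Brenier solution), writes $\ue=(\uz^*+\chi_{\oTG})^*$, and applies the Moreau--Rockafellar sum rule to get that $y_0$ is optimal for $x$ iff $x\in\partial\uz^*(y_0)+\Nn_{\oTG}(y_0)=(\partial\uz)^{-1}(y_0)+\Nn_{\oTG}(y_0)$ with $(\partial\uz)^{-1}(y_0)\subseteq\cob(\SC)$ --- which is exactly the decomposition $x=x_0+v$, $v\in\Nn_{\oTG}(y_0)$, that you are chasing, obtained in one line without any regularity of $\ue$ or any a.e.\ analysis of $\nabla\ue^*$.

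At the step you yourself flag as the crux, however, your justification does not close. You need $F_p=\partial\ue^*(p)\subseteq\cob(\SC)+\Nn_{\oTG}(p)$, equivalently that the extreme points of $F_p$ lie in $\cob(\SC)$, and you assert this follows from ``$F_p\cap\SC=\varnothing$ combined with the first stage.'' It does not: the first stage gives $\partial\ue^*(y)\subseteq\cob(\SC)$ only for $y$ in the \emph{open} set $\TG$, and subdifferential maps are merely upper semicontinuous, so nothing about $\partial\ue^*(p)$ at the boundary point $p\in\partial\TG$ follows by letting $y\to p$; and $F_p\cap\SC=\varnothing$ says nothing about where the extreme points of $F_p$ sit. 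An extreme point $e$ of $F_p$ lying outside $\cob(\SC)$ is a priori consistent with everything you have established (it is an endpoint of the segment $[x_e,e]\subseteq F_p$, which does not contradict extremality). The gap is fillable: the gradient formula you invoke in the interior (Rockafellar, Thm.~25.6) applies equally at the boundary point $p$ of $\dom \ue^*$ and reads $\partial\ue^*(p)=\overline{\mathrm{co}}\,S(p)+\Nn_{\dom\ue^*}(p)$, where $S(p)$ is the set of limits of $\nabla\ue^*(y_k)$ along differentiability points $y_k\to p$ avoiding any prescribed null set; avoiding the $\Ll^2$-null set where $\nabla\ue^*\notin\oSC$ gives $S(p)\subseteq\oSC$, hence $F_p\subseteq\cob(\SC)+\Nn_{\oTG}(p)$ as required. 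Alternatively, the paper's sum-rule computation delivers the same inclusion directly. Either way, the formula must be invoked at $p$ itself, not only at interior points; strict convexity of $\uz$ is neither available nor needed.
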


\begin{remark}\label{rem:affine}
  The condition $\nabla\ue(x)\in\argmax_{y\in\oTG}\dotp{y}{x-x_0}$ actually means that $x-x_0$ is in $\Nn_{\oTG}(y)$, the normal cone to $\oTG$ at $y=\nabla \ue(x)$. In fact, all the points $x'$ such that $x'-x_0\in \Nn_{\oTG}(y)$ are mapped to the same gradient value $y=\nabla\ue(x)$, and $\ue$ is affine on that set.
\end{remark}

Figure~\ref{pacman} provides an illustration of Proposition~\ref{prop:affine} and Remark~\ref{rem:affine}. The connected blue regions in the source space represent points which are mapped to the same gradient value. Such regions are invariant by translation  by $\Nn_{\oTG}(y)$. In $\cob(\SC)\setminus{\oSC}$ (here $X$ is not convex) points are mapped into the interior of $\TG$, onto a line segment (black dashed line) which is a discontinuity set for the gradient of the inverse optimal transport map.  Gradients are also constant on some convex sets corresponding to the subgradients of the inverse optimal map on this line segment. These regions are naturally connected to $X$. The behavior of optimal transport maps in the case of non-convex supports is analysed in depth in \cite{figalli}.


\begin{figure}[htb]
\centering
{\includegraphics[width=0.65\paperwidth]{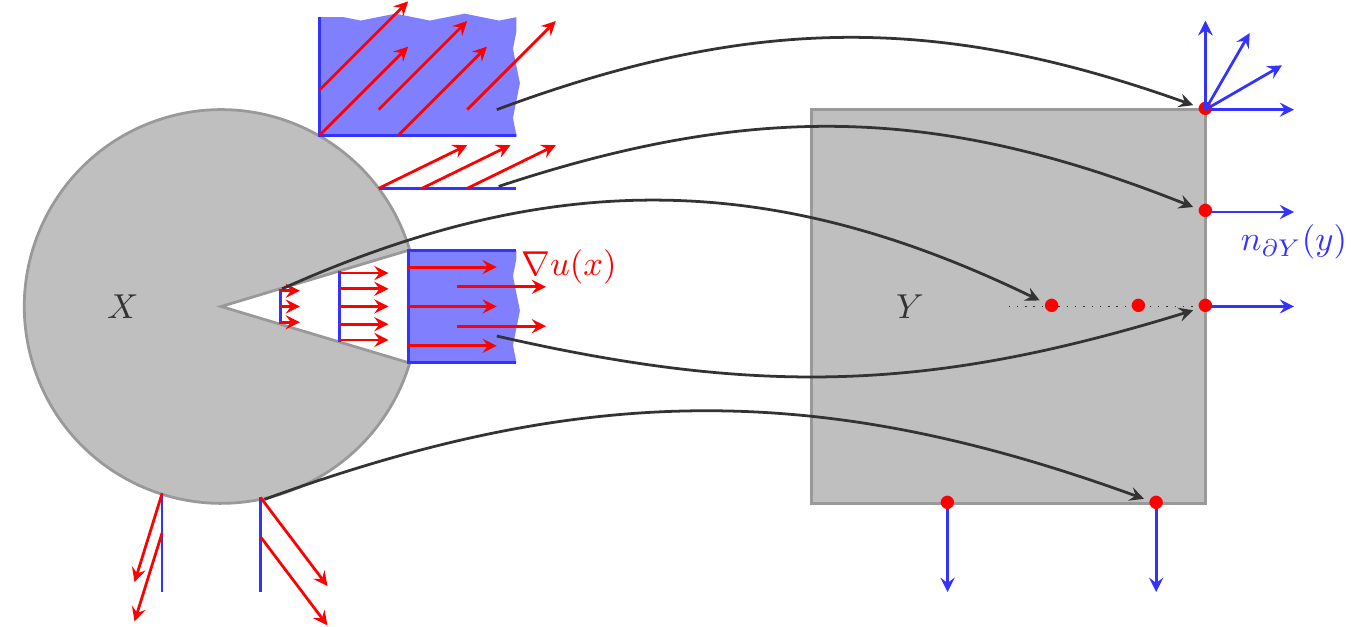}} 
\caption{Illustration of Proposition \ref{prop:affine} showing the correspondence between gradients (red)  and their support (blue) in source space and target space.
The connected blue regions in the source space represent points which are mapped to the same gradient value (in red).}
\label{pacman} 
\end{figure}

\begin{proof}
  In view of the uniqueness (up to a constant) stated in Proposition~\ref{prop:defue}, we may assume without loss of generality that the function $\uz$ used in the construction of $\ue$ (see~\eqref{eq:defue}) is convex lower semi-continuous and satisfies 
  \begin{equation}\label{eq:uzinfty}
\forall x\in \RR^2\setminus \cob{\SC},\quad  \uz(x)=+\infty,
\end{equation}
 as this does not change its being a Brenier solution.

 Let $y_0\in \oTG$ be a slope, and $x\in \RR^2$. Then $y_0$ is optimal for $x$ in~\eqref{eq:defue} iff 
 \begin{equation}\label{eq:ueoptim}
   x\in \partial\left(\uz^*+\chi_{\oTG}\right)(y_0)=\partial\uz^*(y_0)+\Nn_{\oTG}(y_0)=(\partial\uz)^{-1}(y_0)+\Nn_{\oTG}(y_0),
 \end{equation}
 as there is a point $y'\in \TG$ where $\chi_{\oTG}$ is continuous and $\uz^*$ is finite. In the above equation, $\chi_{\oTG}$ and $\Nn_{\oTG}$ respectively stand for the characteristic function and the normal cone of $\oTG$ at $y_0$,
\begin{equation}
  \chi_{\oTG}(y)=\begin{cases}
    0 &\mbox{if $y\in\oTG$,}\\
    +\infty &\mbox{otherwise,}
  \end{cases},\qquad
  \Nn_{\oTG}(y_0)=\enscond{x'\in \RR^2}{\forall y\in\oTG, \dotp{x'}{y-y_0}\leq 0}.
\end{equation}
Equation~\eqref{eq:ueoptim} is equivalent to the existence of some $x_0\in(\partial\uz)^{-1}(y_0)\subset\cob{\SC}$ such that $\dotp{x-x_0}{y-y_0}\leq 0$ for all $y\in \oTG$ (or equivalently $y_0\in\argmax_{y\in\oTG}\dotp{y}{x-x_0}$). 
Clearly, if $y_0$ is optimal for $x$, it is also optimal for $x+t(x-x_0)$ for $t>-1$.

Moreover, sets of the form~\eqref{eq:ueoptim} cover the whole space $\RR^2$, since by compactness and semi-continuity, there always exists an optimal $y_0$ for~\eqref{eq:ueoptim}. Incidentally that slope is in fact $\nabla\ue(x)$ since, provided $y_0$ is optimal for $x$,
\begin{align*}
  \forall e\in\RR^2, \forall t>0,\quad  \ue(x+te)-\ue(x)&\geq  \dotp{x+te}{y_0}-\uz^*(y_0)- \left(\dotp{x}{y_0}-\uz^*(y_0)\right),\\
  \mbox{hence}\quad  t\dotp{e}{\nabla\ue(x)}+o(t)&\geq t\dotp{e}{y_0}.
\end{align*}
Dividing by $t\to 0^+$ yields $y_0=\nabla\ue(x)$.

To summarize, we have proved $(i)$: since for all $x\in \RR^2\setminus\cob(\SC)$, $x-x_0\neq 0$, the set $\enscond{x+t(x-x_0)}{t>0}$ does indeed define a half line. As for $(ii)$, $\nabla\ue(x)\in \oTG$, hence there exists $x_0\in (\partial\uz)^{-1}(\nabla\ue(x))\cap \oSC$, so that $\nabla\ue(x_0)=\nabla\ue(x)$ (and since $\enscond{x'}{\nabla\ue(x')=\nabla\ue(x)}$ is convex, it is not restrictive to assume $x_0\in\partial\SC$).
\end{proof}

\begin{remark}
 Another point of view, using standard tools of convex analysis (see e.g.~\cite[Th. 16.4]{rockafellar1986convex}) is to interpret $\ue$ as an infimal convolution
\begin{equation}\label{eq:infconvol}
        \forall x\in \R^2,\quad   \ue(x)=\inf\enscond{\uz(x')+\sigtg(x-x')}{x'\in \co{\overline{\SC}}}. 
\end{equation}
where $\sigtg:x\mapsto \sup_{y\in\oTG}\dotp{y}{x}$ is the support function of $\oTG$.
\end{remark}

\subsection{Discussion}\label{sec:EMA}

From Proposition~\ref{prop:affine}, we see that the function $\ue$ defined in Proposition \ref{prop:defue} formally satisfies the equations
\begin{align}
  \det(D^2 \,u) \, g(\nabla  u )  = f\ \mbox{on $\SC$},\label{eq:emaMA}\\
  \det(D^2 \,u ) = 0\ \mbox{on $\R^2 \setminus \SC$},\label{eq:emaDGN}\\
  \min_{e \in \sphere } \dotp{(D^2u) e}{e}   = 0\ \mbox{on $\R^2 \setminus \SC$},\label{eq:emaOB}\\
  \sup_{e \in \sphere} \{ \dotp{\nabla u }{e} -\sigma_{\overline{\TG}}(e) \} = 0\ \mbox{on  $\R^2 \setminus \cob(\SC)$}.\label{eq:emaSUP}
\end{align}
Indeed,~\eqref{eq:emaMA} is the Monge-Amp\`ere equation, \eqref{eq:emaDGN} and \eqref{eq:emaOB} follow from the affine property in Proposition~\ref{prop:affine}.  If $\ue$ is smooth and the minimum eigenvalue of $D^2u$ is null, this also enforces convexity  (see \cite{obconvex} for the connection with the convex enveloppe problem). 
As for ~\eqref{eq:emaSUP}, since $\TG$ is convex, the inequality $\sup_{e \in \sphere} \{ \dotp{\nabla u }{e} -\sigma_{\overline{\TG}}(e) \} \leq 0$ is an equivalent formulation of the BV2 boundary condition $\nabla\ue(\RR^2)\subset \oTG$. 
The equality actually means that $\nabla\ue(x)\in \partial \TG$ for $x\in\RR^2\setminus\cob(\SC)$.

The discretization strategy is presented in Section~\ref{sec:discretize}
and then the convergence proof in Section~\ref{sec:convergence}. The convergence will hold in the Aleksandrov/Brenier setting but 
the limit solution regularity itself will depend on the the regularity of $f$ and $g$.

\begin{remark}[Uniqueness]\label{rem:unique2}   It is not difficult to show that $\ue$ is a  viscosity solutions of equations  (\ref{eq:emaMA}-\ref{eq:emaSUP}) but it is 
is much harder to prove uniqueness for this class of equations, see \cite{ug} and its references to oblique boundary conditions . However, $\ue$ coincide with the unique Brenier solution on $\SC$ and the  $\R^2$ extension  is also unique.  More precisely (see remark \ref{rem:unique}) $\ue$ is unique up to a constant. 

\end{remark} 
 
\section{Finite Difference Discretization} 
\label{sec:discretize}

This section explains how our scheme is built from  the set  of the equations of Section~\ref{sec:EMA}
and discuss the properties of the resulting discrete system.\\

We will consider a sequence of discretization steps $(\stsizen)_{n\in\NN}$, $\stsizen>0$, $\stsizen\searrow 0^+$, and we define an infinite lattice of points $\gridn\eqdef\stsizen \,\ZZ^2$. We work in a compact square domain $\domD\subset\RR^2$ (say $\domD=[-1,1]^2$) which contains $\oSC$ in its interior. We assume without loss of generality that $0\in \SC$.
A discrete solution $\udisc{}\in \RR^{\card(\gridn\cap \domD)}$  is defined on that grid :
 if $u$ is a continuous solution of our problem, its discrete interpolant on the grid is  
 $\udisc[x]  = u(x)$ for all $x \in \gridn\cap\domD$. 


We will use the following finite differences formulae in each grid direction  $e$, 
$$ \de\udisc[x] \eqdef \udisc[x+\stsizen\,e] - \udisc[x]$$   
and 
$$ \De\udisc[x] \eqdef  \de\udisc[x]  + \dme\udisc[x].$$
\revision{We say that a vector $e\in \ZZ^2$ is \textit{irreducible} if it has coprime coordinates.}

\subsection{Discretization of the target $\TG$}
As it is defined on a finite grid, our  discrete scheme is only able to estimate the directional gradient in a finite number of directions. 
Hence, the constraint we can impose in practice when discretizing (\ref{eq:emaSUP})  is that the gradient belongs to a polygonal approximation $\TGn$ to $\oTG$.
More precisely given a finite  family of irreducible vectors, $\sten\subset \ZZ^2\setminus\{0\}$, we consider the corresponding set 
\begin{align}
  \label{eq:discrTG}
  \TGn\eqdef \enscond{y\in \RR^2}{\forall e\in \sten,\ \dotp{y}{e}\leq \sigtg(e)}
\end{align}
which is nonempty, closed and convex.
We assume that 
\begin{enumerate}
  \item $\setdir$ contains $\{(0,1),(1,0),(0,-1),(-1,0)\}$  (so that $\TGn$ is compact)
  \item $\setdir\subset \Vv_{n'}$ for $\revision{n' \geq n}$ (so that $\oTG\subset \TG_{n'}\subset \TGn$),
  \item $\bigcap_{n\in\NN}\TGn = \oTG$.
  \item The following inclusion holds for $n$ large enough \begin{equation}\label{eq:vhsubsetD}
  \oSC+\stsizen\setdir\subset \domD.
\end{equation}

\end{enumerate}
The third point holds as soon as $\oTG$ is defined by a finite number of inequalities of the form~\eqref{eq:discrTG} (in which case the constraint $\nabla u(x)\in \oTG$ is exactly imposed) or provided that
 $\enscond{e/\abs{e}}{e\in \bigcup_{n\in \NN} \setdir}$ is dense in $\SS^1$.
The fourth point will be useful in~Proposition~\ref{prop:udisc} to ensure that the variations of $\udisc$ inside $\oSC$ are bounded by $\sigtgn$.

 A general strategy to ensure the above four assumptions is to choose
 \begin{align}
   \sten=\enscond{e\in\ZZ^2}{e\  \mbox{irreducible and } \normi{e}\leq \rhost}
 \end{align}
 where $\rhost\to +\infty$ and \revision{${\rhost}{\stsizen}\to 0$ as $n\to+\infty$ (for instance $\rhost=1/\sqrt{\stsizen}$)}.
The result of this approximation strategy is illustrated in Figure~\ref{eq:approx-ellipse} for three values $\rho_n\in \{1,2,3\}$. It appears that $\rho_n=3$ already yields a sharp polygonal approximation of the considered ellipse.

 In any case, the first three points above ensure that $\TGn$ converges towards $\oTG$ in the sense of the 
 Hausdorff topology~\cite[Lemma~1.8.1]{schneider1993convex}, that is
\begin{align}
  \label{eq:hausdorff}
  \lim_{n\to+\infty} \max\left(\sup_{y\in \oTG}d(y,\TGn), \sup_{y\in\TGn}d(y,\oTG) \right)=0. 
\end{align}

\begin{figure}[htb]
\centering
{\includegraphics[height=0.27\textwidth,clip, trim=0 2.4cm 0 2.4cm]{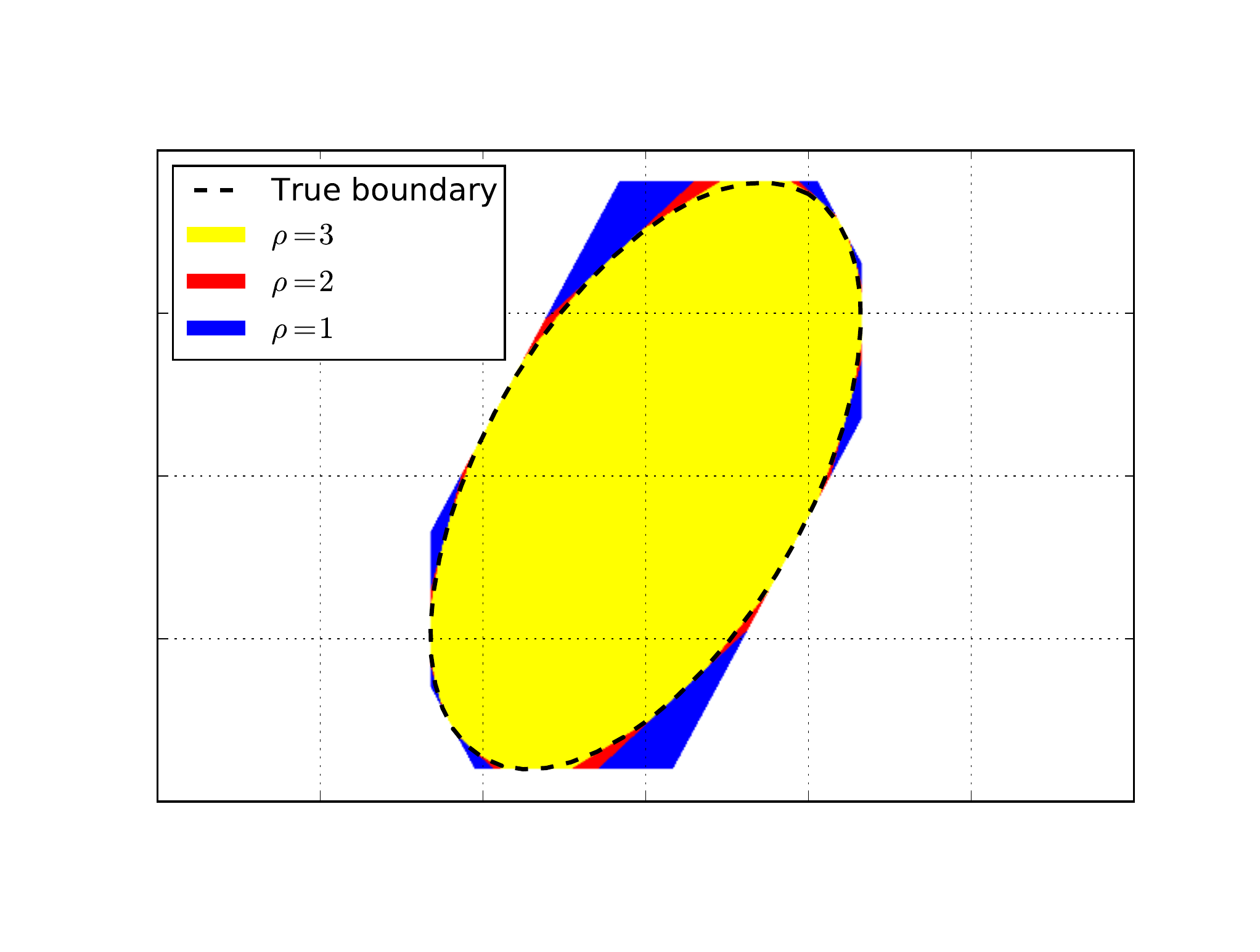}} 
{\includegraphics[height=0.27\textwidth,clip, trim=0 1.5cm 0 1.5cm]{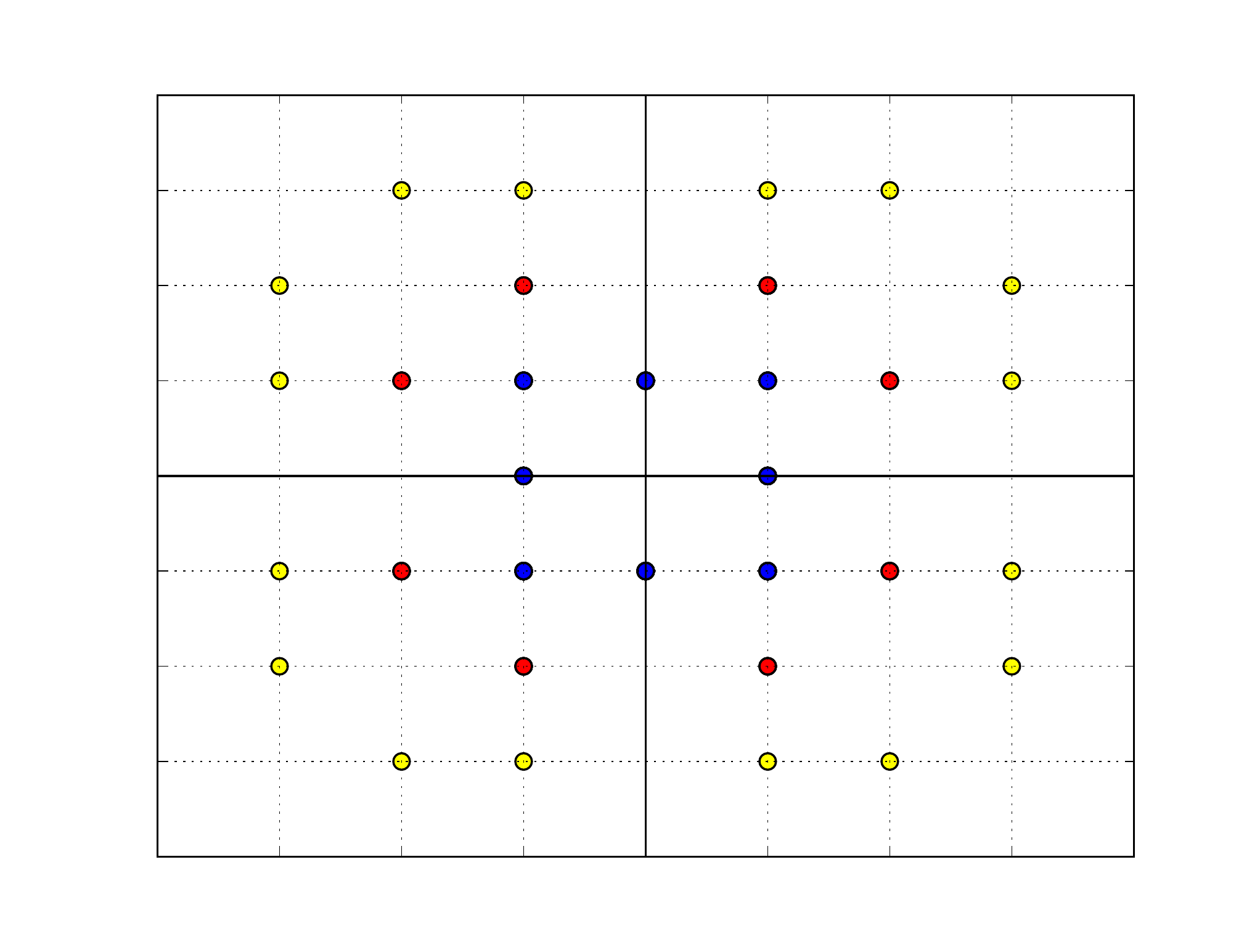}} 
\caption{Left: Approximation of an ellipse using~\eqref{eq:discrTG} for the values $\rho_n\in\{1,2,3\}$ (the true boundary is shown in dashed  red line). The ellipse is obtained by the rotation with angle $\pi/3$ of $\enscond{(y_1,y_2)\in \RR^2}{\frac{1}{a^2}y_1^2+ \frac{1}{b^2}y_2^2=1}$ with $a=2$, $b=1$. Right: the corresponding irreducible vectors used in $\sten$.}
\label{eq:approx-ellipse} 
\end{figure}

\subsection{Discretization of the Monge-Amp\`ere operator in   $\SC$}
We use the (MA-LBR)  scheme for the  Monge-Amp\`ere operator $\det(D^2 u)$  discretization~\cite{malbr}. 
It relies on the notion of superbase~: 
\begin{definition}
\label{superbase} 
A basis of $\ZZ^2$ is a pair $(e',e'')\in (\ZZ^2)^2$ such that $|\det(e',e'')| = 1$.\\
A superbase of $\ZZ^2$ is a triplet $(e,e',e'') \in (\ZZ^2)^3$ such that $e+e'+e''=0$, and $(e',e'')$ is a basis of $\ZZ^2$.
\end{definition}

The finite difference MA-LBR operator, is a consistent and monotone approximation of the Monge-Amp\`ere operator given  by  
\begin{equation}
\label{eq:SB}
\MA(\udisc) [x] \eqdef\frac{1}{\stsizen^4}  \min_{\substack{(e,e',e'') \in (\ZZ^2)^3\\ \text{superbase} } }  \hMA({\De}^+\udisc[x], {\Df}^+ \udisc[x], {\Dg}^+\udisc[x])
\end{equation}
where ${\De}^+\udisc[x]=\max({\De}\udisc[x],0)$ and for $a,b,c \in \R_+$ we define
\begin{equation}
\label{eqdef:h}
\hMA(a,b,c) := 
\begin{cases}
b c  \text{ if } a \geq b+c, \text{ and likewise permuting } a,b,c,\\
\frac 1 2 (ab+bc+ca) - \frac 1 4(a^2+b^2+c^2)  \text{ otherwise}.
\end{cases}
\end{equation}
Note that the minimum in~\eqref{eq:SB} is in fact restricted to superbases $(\ve,\vf,\vg)$ such that 
\begin{align}
  \{x,x\pm \est, x\pm \fst, x\pm \gst\}\subset \domD
\end{align}
(that is, both $x+\est$ and $x-\est$ belong to $\domD$ and similarly for $\vf$ and $\vg$). The number $N$ of such superbases is \textit{a priori} very large, but the adaptive algorithm proposed in~\cite{malbr} yields a dramatic speed-up as it is asymptotically sufficient to test $\log(N)$ superbases out of $N$.
We use this refinement in our simulations and we refer to~\cite{malbr} for the detail of the adaptive algorithm.
Also, it is shown in~\cite{malbr} that the largest width of the vectors in the optimal superbase grows with the condition number of 
the Hessian of $u$. In practice, it is therefore possible to limit the minimization to a stencil defined by its width, i.e. vectors on the grid $\ZZ \times \ZZ$ with a given maximum norm.  


The scheme consistency is remarkable.   Given a  quadratic form $u(x) = \frac{1}{2}\dotp{Mx}{x}$, where $M$ a strictly positive definite matrix with condition number $\kappa$, its grid interpolation $\udisc$ satisfies
\begin{align*}
\MA(\udisc) [x]  = \det(M)
\end{align*} 
\revision{provided that  $\{x,x\pm \est,x\pm \fst,x\pm \gst\}\subset\domD$ for all  superbases as in Definition~\ref{superbase} 
such that $\normd{e}\leq \kappa\sqrt{2}$ (see~\cite{malbr}).}

The MA-LBR operator also provides interesting ``discrete'' convexity properties (see \cite{malbr} and  proposition A.3 in \cite{mirebeau2016adaptive}). That notion is used in Appendix~\ref{sec:cvfindiff} to study the finite difference approximation of the gradient.


Finally the following property states that the MA-LBR operator overestimates the subgradient of the convex envelope of 
$\udisc$ at grid points. The proof was communicated to us by J.-M Mirebeau, and we reproduce it in Appendix~\ref{sec:apxmasub} with his kind permission. This result will be useful in the convergence proof.

\begin{lemma}\label{lem:subgradh2} 
  Let $\underbar{u}:\RR^2\rightarrow \RR\cup\{+\infty\}$ be the largest convex lower semi-continuous function which minorizes $\udisc[x]$ at all points $x\in \gridn\cap \domD$. Then, 
  \begin{align}
    \forall x\in \gridn\cap \SC,\quad   \abs{\partial\underbar{u}(x)}\leq \MA(\udisc)[x]\Vn,
    \end{align}
where $\Vn=\stsizen^2$ is the area of one cell of the grid.
\end{lemma}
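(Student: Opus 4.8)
The plan is to exploit the piecewise-linear structure of $\underbar{u}$ and compare, superbase by superbase, the area of the subgradient image $\partial\underbar{u}(x)$ with the local LBR quantity. First I would recall from the discrete convexity theory of~\cite{malbr,mirebeau2016adaptive} that $\underbar{u}$, being the convex envelope of the grid data, is a piecewise-affine convex function whose domain of linearity near an interior grid point $x$ can be taken to be spanned by lattice vectors; in particular $\partial\underbar{u}(x)$ is a convex polygon, and it suffices to bound its area. The key geometric fact is that the second-order finite differences of $\underbar{u}$ control the slopes of the neighboring affine pieces: if $\underbar{u}$ agrees with $\udisc$ at $x$ and at $x\pm\stsizen e$ for a grid direction $e$, then $\De\underbar{u}[x]\le\De\udisc[x]$, and moreover $\De^+\underbar{u}[x]$ measures (up to the factor $\stsizen^2$ coming from the scaling $D^2 \leftrightarrow \tfrac1{\stsizen^2}\De$) the ``kink'' of $\underbar{u}$ across $x$ in direction $e$.

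Next I would fix a superbase $(e,e',e'')$ and show that the area of $\partial\underbar{u}(x)$ is at most $\stsizen^4\,\hMA(\De^+\underbar{u}[x],\Df^+\underbar{u}[x],\Dg^+\underbar{u}[x])$. This is the heart of the argument: one decomposes the normal fan of $\underbar{u}$ at $x$ according to the superbase and estimates the extent of the subgradient polygon in the three directions $e,e',e''$. The function $\hMA$ defined in~\eqref{eqdef:h} is precisely the area of the (possibly degenerate) hexagon/triangle obtained from these three ``widths'', which is why the two cases in~\eqref{eqdef:h} appear — the triangle case $a\ge b+c$ versus the hexagon case. Because this holds for \emph{every} superbase, and because $\De^+\underbar{u}[x]\le\De^+\udisc[x]$ (with the analogous inequalities for $e',e''$) together with the monotonicity of $\hMA$ in each nonnegative argument, we get
\[
  \abs{\partial\underbar{u}(x)}\le \stsizen^4\,\hMA(\De^+\udisc[x],\Df^+\udisc[x],\Dg^+\udisc[x])
\]
for every superbase, and taking the minimum over superbases yields $\abs{\partial\underbar{u}(x)}\le \stsizen^4\MA(\udisc)[x]=\MA(\udisc)[x]\,\Vn$.

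The main obstacle I anticipate is the superbase-by-superbase area estimate itself, i.e.\ verifying that the subgradient polygon of a piecewise-linear convex function, restricted to the cone structure induced by one superbase of $\ZZ^2$, has area bounded by $\hMA$ of the three second differences. One must handle the two regimes of $\hMA$ (degenerate/triangular versus the quadratic expression), check that the ``widths'' extracted from the second differences are indeed upper bounds for the relevant edge lengths of the dual polygon, and take care that $x$ lies in the interior of $\SC$ so that all six neighbors $x\pm\stsizen e, x\pm\stsizen e', x\pm\stsizen e''$ lie in $\domD$ and the finite differences are well defined — this is exactly the restriction already noted after~\eqref{eq:SB} and the reason the statement is confined to $x\in\gridn\cap\SC$. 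Since the excerpt itself says the proof was communicated by J.-M.\ Mirebeau and is deferred to an appendix, I would expect the full argument to lean on the detailed discrete-convexity lemmas of~\cite{malbr}; the sketch above is the skeleton, and the technical core is the planar polygon-area computation matching $\hMA$.
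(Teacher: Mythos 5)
Your overall architecture is sound and the inequality you aim for is correct, but the step you yourself flag as the heart of the argument --- that $\abs{\partial\underbar{u}(x)}\leq \stsizen^{-2}\,\hMA({\De}^+\underbar{u}[x],{\Df}^+\underbar{u}[x],{\Dg}^+\underbar{u}[x])$ for each superbase --- contains a genuine gap, and it is exactly the point where the paper's proof does something you have not supplied. The identity ``$\hMA(a,b,c)$ is the area of the hexagon/triangle with widths $a,b,c$'' is exact only for the \emph{centrally symmetric} configuration, i.e.\ when the three slabs determined by the constraints at $x\pm\stsizen e$, $x\pm\stsizen e'$, $x\pm\stsizen e''$ are concentric. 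For the actual (non-symmetrized) data, $\partial\underbar{u}(x)$ sits inside an intersection of three \emph{non-concentric} slabs of those widths, and you still owe the inequality ``an off-center intersection has area at most that of the concentric one with the same widths''. That statement is true but not free; it is precisely what the paper establishes by symmetrization plus Brunn--Minkowski: it replaces the data $u$ on a stencil $Y$ symmetric about $x$ by $v(y)=\tfrac12\bigl(u(y)+u(2x-y)\bigr)$, notes $\tfrac12(\underline{u_Y}+\underline{u_{-Y}})\leq \underline{v}$, deduces $\abs{\partial_x\underline{u}}\leq\abs{\partial_x \underline{u_Y}}\leq\abs{\partial_x \underline{v}}$ from the inclusion lemma and the Brunn--Minkowski inequality for subgradients of convex combinations, and only then invokes Remark~1.10 of~\cite{malbr}, which gives the \emph{exact} identity $\abs{\partial_x\underline v}=\MA(\udisc)[x]\,\Vn$ for the symmetrized envelope. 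In particular, the result you propose to import from~\cite{malbr} does not exist there in the non-symmetric form your plan needs.

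Two smaller points. First, you should dispose at the outset of the case $\underbar{u}(x)<\udisc[x]$ (then $\abs{\partial\underbar{u}(x)}=0$ and there is nothing to prove), since your comparison ${\De}^+\underbar{u}[x]\leq{\De}^+\udisc[x]$ uses $\underbar{u}(x)=\udisc[x]$. Second, once the symmetrization step is in place, your intermediate comparison of second differences and the monotonicity of $\hMA$ become unnecessary: the paper passes directly from $\underline{u}$ to the restriction $\underline{u_Y}$ and then to $\underline{v}$, never touching the second differences of the convex envelope. With the Brunn--Minkowski step inserted your plan essentially collapses onto the paper's proof; without it, the ``planar polygon-area computation matching $\hMA$'' remains the unproven core.
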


\subsection{Discretization of the Monge-Amp\`ere operator and the BV2 conditions in $ \domD \setminus \SC$} 

The  MA-LBR scheme  is only suitable to discretize strictly convex functions. 
When at least one eigenvalue decreases to $0$, the stencil width  (see above) becomes infinite. 
To capture the flat behavior of solutions in $\RR^2\setminus \SC$, we need to add more and more directions to the minimization. 

Instead, we  apply  the Wide-Stencil (WS) formulation proposed by 
Oberman in \cite{obconvex} 
to discretize~\eqref{eq:emaOB}, that is, the minimum eigenvalue of the Hessian should be $0$. This simply yields the scheme
\begin{equation} 
\label{ma_dgnr} 
\MA^0(\udisc)[x] \eqdef \min_{e\in V(x)} \De\udisc[x],
\end{equation} 
where $V(x)$ denotes the set of irreducible vectors $e\in\ZZ^2\setminus\{0\}$ such that $\{x-e,x,x+e\}\subset\domD$  


Additionally, we take advantage of the points in \revision{ $\domD\setminus \SC$  } to impose the (BV2) boundary condition, by modifying the scheme (\ref{ma_dgnr}) as follows~: 
 \begin{equation} 
\label{dgnr} 
\tMAo(\udisc)[x] \eqdef\min_{e\in V(x)\cup \setdir}  \tDe \revision{\udisc[x]}
   \end{equation} 
 where for all $x\in \gridn\cap \domD$,
 \begin{align*}
  \tDe\udisc[x]&\eqdef  \tde\udisc[x]+\tdme\udisc[x],\\
   \mbox{and }  \tde\udisc[x]&\eqdef\begin{cases}
  \udisc[x+\est]-\udisc[x] & \mbox{ if $x+\stsize e \in \domD$,}\\
  \sigtg(\est) & \mbox{otherwise.}
\end{cases}
\end{align*}
The rationale of that scheme 
comes from~\eqref{eq:emaOB} and~\eqref{eq:emaSUP}: for fixed $e\in\RR^2$, imposing $\tDe\udisc[x]=0$ is consistent with
\begin{align*}
  \dotp{D^2u(x) e}{e} &=0 \ \mbox{if $x\in \inte(\domD)\setminus \SC $},\\
  \dotp{\nabla u(x)}{e}&= \sigtg(e) \mbox{ if $x\in \partial \domD$ and $e$ points outwards $\domD$.}
\end{align*}

The formal consistency with  with~\eqref{eq:emaOB} and~\eqref{eq:emaSUP} is straightforward. 
In pratice, the same stencil  can be used for $V(x)$, i.e. discretization of the degenerate Monge-Amp\`ere operator 
and $\setdir$, i.e. the discretization of the target geometry.

\subsection{Gradient Approximation}
\label{sec:gradapprox}
Except when  $g$ is the constant density on $\TG$, one needs to discretize the gradient $\nabla u$ in order to discretize the Monge-Ampère equation~\eqref{eq:emaMA}.

In Section~\ref{sec:convergence}, we prove the convergence of the scheme as $n\to +\infty$. The main assumption to obtain this convergence 
is that the discrete gradient $\gradn \udisc$ satisfies the following uniform convergence
\begin{align}
  \label{eq:cvgrad}
  \lim_{n\to+\infty}\sup_{x\in \oSC\cap\gridn}\sup_{y\in\partial\utc(x)}\abs{\gradn\udisc[x]-y}=0,
\end{align}
where $\utc$ is the continuous interpolation of $\udisc$ defined in the next Section (Eq.~\eqref{eq:defutc})  and $\udisc$ are solutions of our discrete scheme  summarized with his properties 
 in  section \ref{ss}. 
Provided a solution $\udisc$ to the scheme exists, Theorem~\ref{thm:convergence} then ensures that its interpolation $\utc$ converges towards the minimal Brenier solution $\ue$ such that $\ue(0)=0$. \\

In particular, \revision{we prove in Appendix~\ref{sec:cvfindiff} that~\eqref{eq:cvgrad} holds for}
\begin{itemize}
\item 
the centered finite difference on the cartesian grid,
\begin{equation} 
\label{d1hC}
\gradn \udisc [x]  \eqdef \frac{1}{2\stsizen} \begin{pmatrix}
\delta^{\stsizen}_{(1,0)} \udisc [x] - \delta^{\stsizen}_{(-1,0)} \udisc [x]\\
  \delta^{\stsizen}_{(0,1)} \udisc [x] - \delta^{\stsizen}_{(0,-1)} \udisc [x]
\end{pmatrix}, 
\end{equation}
\item the forward and backward finite differences on the cartesian grid,
\begin{align} 
\gradn \udisc [x]  \eqdef \frac{1}{\stsizen} \begin{pmatrix}
\delta^{\stsizen}_{(1,0)} \udisc [x]\\
  \delta^{\stsizen}_{(0,1)} \udisc [x]
\end{pmatrix},\quad 
\gradn \udisc [x]  \eqdef \frac{1}{\stsizen} \begin{pmatrix}
-\delta^{\stsizen}_{(-1,0)} \udisc [x]\\
  -\delta^{\stsizen}_{(0,-1)} \udisc [x]
\end{pmatrix}.
\label{d1hFB}
\end{align}
\end{itemize}
\revision{Please note that the proof strongly depends on the specific properties of our construction (discrete-convexity, boundedness of the gradient, convergence to a $\Cder^{1}$ function\ldots), and that~\eqref{eq:cvgrad} should not hold in general for $\udisc$ an arbitrary sequence of discrete functions.}

\revision{ Another approach in the framework of Hamilton-Jacobi equations and conservation laws \cite{RL}  the simplest and classic correction is to use a Lax-Friedrich style regularisation. 
Setting $F(x,q_1,q_2) = \frac{f(x)}{g(q_1,q_2)}$  the first order term in the Monge-Amp\`ere equation is discretized as  
\begin{equation} 
\label{d1hLF}
F(x,\gradlf \udisc [x])   \eqdef     F(x, \gradnc \udisc [x]) + \alpha_x \,  \frac{1}{2} (\delta^{\stsizen}_{(0,1)} \udisc [x] + \delta^{\stsizen}_{(0,-1)} \udisc [x] )   + \beta_x \, 
\frac{1}{2} (\delta^{\stsizen}_{(1,0)} \udisc [x] + \delta^{\stsizen}_{(-1,0)} \udisc [x] ) 
\end{equation}
where $\alpha_x  =  \max_{q_1,q_2} \| \partial_{q_1} F(x,q_1,q_2) \| $ and $\beta_x  =  \max_{q_1,q_2} \| \partial_{q_2} F(x,q_1,q_2) \| $. \\
The construction ensures that the derivatives of the scheme with respect to the $\delta^{\stsizen}_e$ remain positive and hence preserve the degenerate 
ellipticity. This formulation allows to prove the convergence of the Non-linear Newton solver  the price to pay is the 
introduction of artificial diffusion wich has no meaning in our Monge-Amp\`ere equation.   }

\subsection{Summary of the scheme and property of the discrete system}
\label{ss} 

Finally, for fixed $n\in\NN$, we plan on computing $\udisc$ solution of 
\begin{equation}\label{eq:systemMAdisc}
  \forall x\in \gridn \cap \domD, \quad  
  \begin{cases}
    \MA(\udisc)[x] - \frac{\adsc[x]}{\dtg((\gradn\udisc)[x])} = 0 & \mbox{ if $x\in \SC$}      \\
    \tMAo(\udisc)[x]=0 & \mbox{otherwise}      \\
    \udisc[0]=0 &
  \end{cases}
\end{equation}
where $\adsc[x]\eqdef \frac{1}{\stsizen^2}\int_{[-\stsizen/2,\stsizen/2]^2}\dsc(x+t)\d t$ is a local average of the density $\dsc$.
The added scalar equation   $\udisc[0]=0$ fixes the constant. 

Now, we list several properties of the scheme which will be useful for the proof of convergence.

\begin{proposition}
  \label{prop:udisc}
  If $\udisc$ is a solution to~\eqref{eq:systemMAdisc}, then
  \begin{enumerate}
    \item For all $x\in \gridn \cap \SC$, and all $e$ irreducible such that $\{x+\est, x, x-\est\}\subset \domD$, $\De\udisc[x]> 0$.
    \item For all $x\in \gridn\cap (\domD\setminus\SC)$, and all $e\in\setdir$ such that $\{x+\est, x, x-\est\}\subset \domD$, $\De\udisc[x]\geq 0$.
    \item For all $x\in \gridn \cap \domD$, $e\in\setdir$ and $(k,\ell)\in \NN^2$, such that $k\leq \ell$,
      \begin{equation}\label{eq:disclip}
      \begin{aligned}
        -\sigtg(-\est)&\leq \udisc[x+(k+1)\est]-\udisc[x+k\est] \\
        &\leq  \udisc[x+(\ell+1)\est]-\udisc[x+\ell\est]\leq \sigtg(\est) 
      \end{aligned}
    \end{equation}
whenever $x+i\est\in \domD$ for $i\in \{k,k+1,\ell,\ell+1\}$.

\item If $x\in \domD$ and $e\in\setdir$ irreducible are such that $\udisc[x+\est]-\udisc[x]=\sigtg(\est)$, then $\udisc[x+k\est]=\udisc[x]+k\sigtg(\est)$ for all integer $k\geq 0$ such that $x+k\est\in \domD$.
\item There exists $C>0$ \revision{(independent of $n$)} such that for all \revision{$x,x'\in \gridn\cap\domD$}, 
  \begin{equation}\label{eq:lipu}
    \abs{\udisc[x]-\udisc[x']}\leq C\normu{x-x'}.
  \end{equation}
  \end{enumerate}
\end{proposition}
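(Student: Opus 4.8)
I would prove the five items in the order listed, since each rests on the previous ones; items 1--2 are local statements about the scheme at a single grid point, item 3 is the key propagation statement along a grid line in a direction of $\setdir$, and items 4--5 are short consequences of item 3.

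\emph{Items 1 and 2 (local discrete convexity).} Item 2 is immediate: at $x\in\gridn\cap(\domD\setminus\SC)$ the solution satisfies $\tMAo(\udisc)[x]=0$, and the minimum defining $\tMAo$ ranges over $V(x)\cup\setdir$, so $\tDe\udisc[x]\geq 0$ for every $e\in\setdir$; whenever moreover $\{x\pm\est,x\}\subset\domD$ the clamped differences $\tde,\tdme$ reduce to the usual ones, i.e. $\tDe\udisc[x]=\De\udisc[x]\geq 0$. For item 1, the first line of~\eqref{eq:systemMAdisc} together with $\lambda^{-1}\leq \dsc,\dtg\leq\lambda$ gives $\MA(\udisc)[x]\geq\lambda^{-2}>0$; a direct inspection of~\eqref{eqdef:h} shows that for $a,b,c\geq 0$ one has $\hMA(a,b,c)>0$ if and only if $a,b,c>0$, so every superbase occurring in the minimum~\eqref{eq:SB} already has all three of $\De\udisc[x],\Df\udisc[x],\Dg\udisc[x]$ strictly positive. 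I would then invoke the discrete‑convexity property of the MA‑LBR operator (\cite{malbr}, and Prop.~A.3 of~\cite{mirebeau2016adaptive}; cf.\ also Lemma~\ref{lem:subgradh2} and the convex envelope $\underbar u$ of $\udisc$): a strictly positive value of $\MA(\udisc)$ at an interior grid point forces $\udisc$ to be strictly discretely convex there, which upgrades the previous observation to $\De\udisc[x]>0$ for \emph{every} irreducible $e$ with $\{x\pm\est,x\}\subset\domD$, not only those lying in a stencil‑admissible superbase.

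\emph{Item 3 (monotone, bounded slopes along $\setdir$).} Fix $x$, $e\in\setdir$ and set $s_i\eqdef\udisc[x+(i+1)\est]-\udisc[x+i\est]$ whenever both endpoints lie in $\domD$. Since $\domD=[-1,1]^2$ is convex, $\{i\in\ZZ:x+i\est\in\domD\}$ is an integer interval $[m',m]$; by hypothesis $m'\leq k\leq\ell+1\leq m$, so $m'\leq k\leq\ell\leq m-1$. The middle inequality $s_k\leq s_\ell$ comes from monotonicity of $(s_i)$: for $m'<j<m$ the point $y=x+j\est$ lies in $\domD$ together with $y\pm\est$, and $\De\udisc[y]=s_j-s_{j-1}$ is $\geq 0$ by item 1 if $y\in\SC$, and by item 2 (using $e\in\setdir$) if $y\in\domD\setminus\SC$. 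For the two outer inequalities I would walk to the last lattice point $y=x+m\est$ of the ray inside $\domD$: then $y+\est\notin\domD$, and by assumption~\eqref{eq:vhsubsetD} ($\oSC+\stsizen\setdir\subset\domD$) this forces $y\notin\SC$, hence $0=\tMAo(\udisc)[y]\leq\tDe\udisc[y]=\sigtg(\est)+\bigl(\udisc[x+(m-1)\est]-\udisc[x+m\est]\bigr)=\sigtg(\est)-s_{m-1}$, so $s_\ell\leq s_{m-1}\leq\sigtg(\est)$. Symmetrically, at the first lattice point $y'=x+m'\est$ of the ray in $\domD$ one has $y'-\est\notin\domD$, again $y'\notin\SC$ by~\eqref{eq:vhsubsetD} (using that $\setdir$ may be taken symmetric, so $-e\in\setdir$), hence $0\leq\tDe\udisc[y']=s_{m'}+\sigtg(-\est)$, giving $-\sigtg(-\est)\leq s_{m'}\leq s_k$.

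\emph{Items 4 and 5 and the main obstacle.} Item 4 is a corollary of item 3: if $s_0=\udisc[x+\est]-\udisc[x]=\sigtg(\est)$ then applying item 3 with $\ell=k-1$ gives $\sigtg(\est)=s_0\leq s_{k-1}\leq\sigtg(\est)$ for every admissible $k$, so $s_{k-1}=\sigtg(\est)$ and summation yields $\udisc[x+k\est]=\udisc[x]+k\sigtg(\est)$. For item 5, I would join $x,x'\in\gridn\cap\domD$ by the ``L‑shaped'' grid path through $(x_1',x_2)$, whose vertices all remain in the box $\domD$ and which consists of $\normu{x-x'}/\stsizen$ unit steps $y\mapsto y\pm\stsizen e$ with $e\in\{(1,0),(0,1)\}\subset\setdir$; by item 3 (case $k=\ell=0$) each increment has modulus at most $\stsizen\max(\sigtg(e),\sigtg(-e))$, and summing gives~\eqref{eq:lipu} with $C\eqdef\max_{e\in\{\pm(1,0),\pm(0,1)\}}\sigtg(e)$, a constant depending only on $\oTG$ and hence independent of $n$. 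I expect the only genuinely delicate step to be item 1: passing from ``every stencil‑admissible superbase is non‑degenerate'' to ``$\De\udisc[x]>0$ for every admissible irreducible $e$'' cannot be done direction by direction, because a far direction need not lie in any superbase whose full stencil fits inside $\domD$, and it really relies on the discrete‑convexity machinery of~\cite{malbr} (most naturally phrased through the lower convex envelope). A secondary subtlety, used repeatedly in item 3, is that the extreme lattice point of a grid ray inside $\domD$ necessarily lies outside $\SC$ — which is precisely what assumption~\eqref{eq:vhsubsetD} was designed to guarantee.
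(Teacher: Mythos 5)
Your proposal follows the paper's own route for all five items: items 1--2 are read off the scheme pointwise, item 3 is proved by pushing to the extreme lattice points of the ray (where \eqref{eq:vhsubsetD} forces the point out of $\SC$ and the clamped difference $\tde$ produces the bound $\sigtg(\est)$) and filling in the middle with $\De\udisc\geq 0$, and items 4--5 are the same corollaries with the same constant $C$. The one place you diverge is the ``upgrade'' step in item 1. You are right that the paper's one-line argument only shows $\De\udisc[x]>0$ for directions $e$ that occur in some superbase whose whole stencil lies in $\domD$, and that this does not \emph{a priori} cover every irreducible $e$ with $\{x\pm\est,x\}\subset\domD$. But your proposed fix is the weak point: you invoke ``a strictly positive value of $\MA(\udisc)$ forces strict discrete convexity in every direction'' via the convex-envelope machinery, and the only quantitative statement of that kind available here, Lemma~\ref{lem:subgradh2}, is the inequality $\abs{\partial\underbar{u}(x)}\leq \MA(\udisc)[x]\Vn$ — it bounds the subgradient \emph{from above} by the operator, so positivity of $\MA(\udisc)[x]$ gives no information about $\partial\underbar{u}(x)$ and cannot deliver the claimed upgrade. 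The gap closes elementarily instead: since $\domD$ is an axis-aligned box, any irreducible $e$ (after changing signs of coordinates so that $e\geq 0$) has Stern--Brocot parents $e_1,e_2$ with $e=e_1+e_2$, $\abs{\det(e_1,e_2)}=1$ and $0\leq e_1,e_2\leq e$ componentwise, so $(-e,e_1,e_2)$ is a superbase whose six stencil points $x\pm\est$, $x\pm\stsizen e_1$, $x\pm\stsizen e_2$ all have coordinates between those of $x-\est$ and $x+\est$ and hence lie in $\domD$; applying your (correct) observation $\hMA(a,b,c)>0\Rightarrow a,b,c>0$ to that admissible superbase gives $\De\udisc[x]>0$ for every admissible irreducible $e$. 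With that substitution the proof is complete and coincides with the paper's.
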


\begin{proof}
  The first point follows from $\MA(\udisc)[x]=\frac{\adsc(x)}{\dtg((\gradn\udisc)[x])}>0$ and the inequality $\hMA(a,b,c)\leq \min\{ab, bc, ca\}$ (see~\cite{malbr}). As for the second, point it follows immediately from the definition of $\tDe\udisc$ and the scheme in $\domD\setminus\SC$.

  To prove the third point, let us first assume that $\ell\in \ZZ^2$ is such that $\{x+\ell\est,x+(\ell+1)\est\}\subset \domD$ but $x+(\ell+2)\est\notin\domD$. By~\eqref{eq:vhsubsetD} we deduce that $x+(\ell+1)\est\in D\setminus \oSC$, so that
  \begin{align*}
    0\leq \tDe\udisc[x+(\ell+1)\est]=\udisc[x+\ell\est]-\udisc[x+(\ell+1)\est]+\sigtg(\est),
  \end{align*}
  which yields $\udisc[x+(\ell+1)\est]-\udisc[x+\ell\est]\leq \sigtg(\est)$. Similarly $ -\sigtg(-\est)\leq \udisc[x+(k+1)\est]-\udisc[x+k\est]$ if $x+(k-1)\est\notin \domD$ but $\{x+k\est,x+(k+1)\est\}\subset \domD$. The other intermediate inequalities follow from $\De\udisc[x+i\est]\geq 0$.

  The fourth point is a consequence of~\eqref{eq:disclip}. 

  Now, we deal with the last point. Write $x-x'=k\stsize e_1+\ell \stsize e_2$ where $e_1\eqdef(1,0)\in\setdir$ and $e_2\eqdef(0,1)\in\setdir$. Applying~\eqref{eq:disclip}, we get
\begin{align*}
  \udisc[x]-\udisc[x']&\leq \stsize\left(k^+\sigtg(e_1)+(-k)^+\sigtg(-e_1)+\ell^+\sigtg(e_2)+(-\ell)^+\sigtg(-e_2)\right),\\
\mbox{and}\quad  \udisc[x']-\udisc[x]&\leq \stsize\left(k^+\sigtg(-e_1)+(-k)^+\sigtg(e_1)+\ell^+\sigtg(-e_2)+(-\ell)^+\sigtg(e_2)\right),
\end{align*}
where, as before, $k^+\eqdef \max(k,0)$. Hence~\eqref{eq:lipu} holds with $C=\max\{\sigtg(e_1),\sigtg(-e_1),\sigtg(e_2),\sigtg(-e_2)\}$.
\end{proof}

\paragraph{Degenerate Ellipticity. and well posedness}

In \cite{oberman2006convergent},  \revision{ Oberman develops  a framework to discretize degenerate Elliptic equations such as the Monge-Amp\`ere equation. 
The relevant finite difference scheme must satisfy a modified version of monotonicity called Degenerate Ellipticity (DE).
Consider an abstract scheme represented at each point $x\in\gridn\cap\domD$ by an equation of the form 
\begin{align}
  S(x,\udisc[x],\{\delta_{y-x}\udisc[x]\}_{y\in\gridn\cap\domD\setminus\{x\}})=0.
\end{align}
\begin{definition} 
\label{degell} 
The scheme $S$ is Degenerate Elliptic (DE) if for all $x \in \gridn\cap\domD$, $S(x,\cdot,\cdot)$ is nonincreasing in its first variable, and nondecreasing in any other variable.
\end{definition} 
This condition is in particular necessary to satisfy the general convergence result of  Barles and Souganidis \cite{BS} towards viscosity solutions as discussed in the introduction. 
To the best of our knowledge, there are only two instances of DE scheme for the Monge-Amp\`ere 
operator : the Wide-Stencil and MA-LBR schemes.  The BV2 discretization also satisfies this conditions. 

Another useful result in \cite{oberman2006convergent}  (Th. 8)  is  :  {\em A DE Lipschitz and proper scheme is well defined and has a unique solution.  }
These properties can be checked on (\ref{eq:systemMAdisc}) when $g$ is the uniform density.   Otherwise and 
even when $g$ is uniformly Lipschitz,  the inner gradient approximation destroys the  DE of the scheme. 
  A fix proposed by Froese and Oberman \cite{obf}  is to remark that  
a uniform positive lower bound on the Monge-Amp\`ere operator is sufficient to dominate the 
DE defect potentially induced by the gradient approximation. Unfortunately this bound is not 
straightforward to establish  (Froese and Oberman impose  it by truncating their scheme) and well-posedness for our scheme remains open in the general case. }

\paragraph{Newton solver.}

As in \cite{malbr, bfo, mirebeau3d, loeper, merigot, blevy} we use a damped Newton algorithm  to solve 
the system (\ref{eq:systemMAdisc}).  For the description of the algorithm and proof of its convergence we refer to 
the papers above.    The crucial ingredient is to prove global  convergence of Newton iterates 
is the invertibility of the Jacobian matrix of the system.  See for example \cite{mirebeau3d} where 
the case of Dirichlet boundary conditions is treated. \\

In our case  the invertibility of the Jacobian remains an open problem, regarding~: 
\begin{itemize} 
\item The proposed  discretization of the  BV2 condition. 
\item  The non constant $g$ density case 
\end{itemize} 

In practice and in the implementation, the system (\ref{eq:systemMAdisc}) is unchanged  but for the residual inversion 
in the Newton procedure  we use an inexact Jacobian \revision{which preserves diagonal dominance  as follows.}

For each line $x$ of the Jacobian~: 
set $dF_1[x] =   \partial_{q_1} F(x,D_C^h \udisc [x])  $  and $dF_2[x] =   \partial_{q_2} F(x,D_C^h \udisc [x])  $ 
and add  coefficients of an upwind type discretization on the classical five point stencil : 
\begin{equation} 
\label{rose}
\begin{array} {rl}
 \mbox{At column $x+(1,0)\,h$ : }  & Gw = -\max(0,dF_1[x]) \\
\mbox{At column $x+(-1,0)\,h$ : }  & Ge =  \min(0,dF_1[x]) \\
\mbox{At column $x+(0,1)\,h$ : }  & Gn = - \max(0,dF_2[x]) \\
\mbox{At column $x+(0,-1)\,h$ : }  & Gs = \min(0,dF_2[x]) \\
\mbox{On the diagonal : }  & Gn+Gs+Gw+Ge 
\end{array} 
\end{equation} 

Finally, $g$ needs to be defined globally in case the iterate generate gradients 
outside of the target $\TG$.

\section{Convergence}
\label{sec:convergence}

We now assume that for all 
discretization steps $(\stsizen)_{n\in\NN}$, $\stsizen>0$, $\stsizen\searrow 0^+$,
\revision{there exists a solution} $(\udisc[x])_{x\in \gridn\cap \domD}$, and we proceed to 
show that it converges   to the Brenier solution of the problem.

\revision{%
The proof is articulated along the following steps.
\begin{enumerate}
  \item We build functions $\utc$ which ``interpolate'' the values $(\udisc[x])_{x\in\gridn}$ and which converge (along a subsequence) towards some funtion $\vlim$ as $n\to +\infty$ (Prop.~\ref{prop:generalutc} and Lemma~\ref{lem:subgradh}).
  \item The function $\utc$ is an Aleksandrov solution for a semi-discrete OT problem between some measure discrete measure $\mun$ supported on the grid $\gridn$ and some absolutely continuous measure $\nun$ (Lemma~\ref{lem:vC1}). 

  \item The measures $\mun$ and $\nun$ respectively converge to absolutely continuous measures $\tmu$ and $\tnu$, and $\vlim$ is a \textit{minimal Brenier solution} for the transport between $\tmu$ and $\tnu$ (Lemma~\ref{lem:vC1}).

  \item As a result, $\vlim\in \Cder^{1}(\RR^2)$, and the gradient approximation $\gradn$ satisfies \eqref{eq:cvgrad} (see Section~\ref{sec:gradapprox}).
  \item This is used to show that $\tmu=\mu$, $\tnu=\nu$ and $\vlim=\ue$ (Lemma~\ref{lem:numu} and Theorem~\ref{thm:convergence}).
\end{enumerate}
}


 \subsection{Convex extension as an interpolation and its properties }
 Let us recall that we assume that $\dtg$ is continuous on $\oTG$, whereas $\dsc$ is only Lebesgue integrable.
We will assume that there exists $(\inff,\supf,\infg,\supg)\in (0,+\infty)^4$, such that
\begin{align}
  \forall x\in \oSC,\ \inff\leq f(x)\leq \supf,\label{eq:boundf}\\
  \forall y\in \oTG,\ \infg\leq g(y)\leq \supg. \label{eq:boundg}
\end{align}
In fact, possibly changing $g$ in $\RR^2\setminus \oTG$, it is not restrictive to assume that $g$ is continuous \revision{on a neighborhood of $\TG_{0}\supset \oTG$ and that~\eqref{eq:boundg} holds for all $y$ in that neighborhood.}

From the values $(\udisc[x])_{x\in \gridn\cap \domD}$ of the discrete problem (see Section~\ref{ss}), we build the following function $\utc:\RR^2\rightarrow \RR$, with
\begin{align}
\utc(x)&\eqdef\sup\enscond{L(x)}{L:\RR^2\rightarrow \RR \mbox{ is affine, $\nabla L\in\TGn$, and } \forall x'\in \gridn\cap \domD, L(x')\leq \udisc[x']}.
\end{align}
Equivalently, 
\begin{align}
  \forall x\in \RR^2, \quad   \utc(x)&\eqdef\sup_{y\in \TGn} \left(\dotp{x}{y}-\uast(y)\right)\label{eq:defutc}\\
  \mbox{where }\quad \uast(y)&\eqdef\sup_{x\in \gridn\cap \domD} \left(\dotp{y}{x}-\udisc[x] \right).
\end{align}

The following proposition gathers some properties which typically hold with such constructions (see also~\cite{carlier2015discretization}).

\begin{proposition}\label{prop:generalutc}The following properties hold.
  \begin{enumerate}
    \item   $\utc$ is convex, finite-valued, and by construction $\utc(x)\leq \udisc[x]$ for all $x\in \gridn\cap \domD$.
    \item $\partial \utc(\RR^2)=\partial \utc(\gridn\cap \domD)=\TGn$.
\item Let $x\in \gridn\cap\domD$. If $\utc(x)< \udisc[x]$, then $\abs{\partial\utc(x)}=0$.
  \end{enumerate}
\end{proposition}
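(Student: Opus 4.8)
The three statements are about the convex function $\utc$ built as a Legendre-Fenchel-type dual in~\eqref{eq:defutc}, so the natural strategy is to work entirely with convex-analytic identities, treating $\uast$ and the indicator $\chi_{\TGn}$ as the basic objects.

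\emph{First} I would establish that $\utc$ is finite and convex. Convexity is immediate since $\utc$ is a supremum of affine functions; lower semi-continuity too. For finiteness, note that $\uast$ is a supremum over the \emph{finite} set $\gridn\cap\domD$ of affine functions of $y$, hence finite-valued and continuous on $\RR^2$, and in particular finite on the compact set $\TGn$. Therefore $\utc(x)=\sup_{y\in\TGn}(\dotp{x}{y}-\uast(y))$ is a supremum of an upper semi-continuous function over a nonempty compact set, hence finite (and the sup is attained). The bound $\utc(x)\leq\udisc[x]$ for $x\in\gridn\cap\domD$ follows because the affine map $L=\dotp{\cdot}{y}-\uast(y)$ (for any fixed $y\in\TGn$) satisfies $L(x')=\dotp{x'}{y}-\uast(y)\leq\udisc[x']$ by definition of $\uast$, so $L$ is admissible in the first description of $\utc$; taking the sup over such $L$ at the point $x$ gives the claim. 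This disposes of item~1.

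\emph{Second}, for item~2 I would compute $\partial\utc$ via biconjugation. Writing $\psi\eqdef\uast+\chi_{\TGn}$, we have $\utc=\psi^*$, and since $\psi$ is proper convex l.s.c. (it is the sum of the finite continuous convex $\uast$ and the indicator of the nonempty compact convex $\TGn$), we get $\partial\utc(\RR^2)=\dom(\partial\psi^*)=\ran(\partial\psi)$. Because $\uast$ is finite and continuous everywhere, and $\TGn$ is a full-dimensional polytope (assumption~1 on $\setdir$ ensures it is compact with nonempty interior), the sum rule gives $\partial\psi(y)=\partial\uast(y)+\Nn_{\TGn}(y)$ at each $y\in\TGn$. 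Now $\partial\uast(\RR^2)=\Conv(\gridn\cap\domD)$ (the subdifferential of the conjugate of a finite sum of linear functionals indexed by the grid is the convex hull of those grid points); since $\domD$ is convex and contains $\gridn\cap\domD$, in particular $\partial\uast(y)$ is always nonempty. Taking the union over $y\in\TGn$ of $\partial\uast(y)+\Nn_{\TGn}(y)$ and using that every boundary direction of $\TGn$ is realized, one gets $\bigcup_{y\in\TGn}\bigl(\partial\uast(y)+\Nn_{\TGn}(y)\bigr)=\RR^2$; conversely each such set is a subset of $\partial\utc(\RR^2)$. So $\partial\utc(\RR^2)=\RR^2$-worth of points, but the \emph{values} taken, i.e. the $y$'s, range exactly over $\TGn$: indeed $p\in\partial\utc(x)$ iff $x\in\partial\psi(p)$ iff $p\in\TGn$ and $x\in\partial\uast(p)+\Nn_{\TGn}(p)$, which forces $p\in\TGn$ and is solvable for some $x$ for every $p\in\TGn$. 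Hence $\partial\utc(\RR^2)=\TGn$. The refinement $\partial\utc(\gridn\cap\domD)=\TGn$ follows because, for each $p\in\TGn$, one may choose the witness $x$ in $\partial\uast(p)=\Conv(\gridn\cap\domD)\subseteq\domD$; but we additionally want $x$ to be a grid point. This is where I would argue: the optimal $x$ in $\utc(x)=\dotp{x}{p}-\uast(p)$ can be taken at an exposed grid point of $\Conv(\gridn\cap\domD)$ when $p$ lies in the normal fan in generic position, and for the remaining $p$ a small perturbation / limiting argument, together with $\TGn$ being the closure of such generic slopes, gives the equality of the \emph{images} $\partial\utc(\gridn\cap\domD)=\TGn$ even if individual $p$ are realized only as limits — more carefully, since $\uast(p)=\sup_{x\in\gridn\cap\domD}(\dotp{p}{x}-\udisc[x])$ and the set is finite, the sup is attained at some grid point $x_p$, and then $p\in\partial\uast(x_p)$... wait, that's the wrong direction; rather $x_p\in\partial\uast(p)$ and also $p\in\partial\udisc\text{-type relation}$, giving $p\in\partial\utc(x_p)$. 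So for every $p\in\TGn$ there is a grid point $x_p$ with $p\in\partial\utc(x_p)$, which is exactly $\partial\utc(\gridn\cap\domD)\supseteq\TGn$; the reverse inclusion is part~2 first half. I expect this attainment argument — extracting a \emph{grid} witness rather than a convex-hull witness — to be the main obstacle, though it reduces to the finiteness of the index set.

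\emph{Third}, for item~3, suppose $x\in\gridn\cap\domD$ with $\utc(x)<\udisc[x]$, and let $p\in\partial\utc(x)$. Then $\utc(x)=\dotp{x}{p}-\uast(p)$ with $p\in\TGn$, i.e. $x$ attains the sup in the conjugate relation, so $x\in\partial\uast(p)=\Conv(\gridn\cap\domD)$. Equivalently $\uast(p)=\dotp{x}{p}-\utc(x)>\dotp{x}{p}-\udisc[x]$, which says that the grid point $x$ is \emph{not} active in the supremum defining $\uast(p)=\sup_{x'\in\gridn\cap\domD}(\dotp{p}{x'}-\udisc[x'])$; the active points are other grid points $x'\neq x$. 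Thus $x$ lies in the interior of a face of $\Conv(\gridn\cap\domD)$ whose vertices are among these other grid points, hence $x$ is a (nontrivial) convex combination $x=\sum\lambda_i x_i$ with $x_i\neq x$ and $p\in\partial\utc(x_i)$ for each active $x_i$. Since $\partial\utc$ is a monotone operator whose graph is closed and $p$ is shared, and since $x$ is a strict convex combination of the $x_i$'s, the standard fact that a convex function is affine on the segment between two points sharing a subgradient forces $\utc$ to be affine on $\Conv\{x_i\}\ni x$ with constant gradient $p$ there; therefore $\partial\utc(x)=\{p\}$ is a singleton, so $\abs{\partial\utc(x)}=0$. (Here $\abs{\cdot}$ denotes the Lebesgue measure/area of the subdifferential set, consistent with Lemma~\ref{lem:subgradh2}.) The only delicate point is ruling out the degenerate case where no $x'\neq x$ is active, i.e. that $x$ itself is active; but that would give $\uast(p)=\dotp{x}{p}-\udisc[x]$, i.e. $\utc(x)=\udisc[x]$, contradicting the hypothesis. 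I would present item~3 last since it reuses the representation $x\in\partial\uast(p)$ from the proof of item~2.
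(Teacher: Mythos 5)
Items 1 and 2 of your proposal are essentially sound and, once the conjugate-duality dressing is stripped away, coincide with the paper's argument: the inclusion $\partial\utc(\RR^2)\subseteq\TGn$ comes from $\dom(\utc^*)\subseteq\TGn$, and the reverse inclusion $\TGn\subseteq\partial\utc(\gridn\cap\domD)$ comes from the fact that the finite supremum defining $\uast(p)$ is attained at some grid point $x_p$, which then satisfies $p\in\partial\utc(x_p)$ (this is exactly the paper's affine minorant $L:x'\mapsto\dotp{p}{x'}+\alpha$ touching $\udisc$ at $x_p$). The mid-argument detour through $\partial\uast(\RR^2)$ is unnecessary and contains a wrong identity ($\partial\uast(p)$ is the convex hull of the \emph{active} grid points at $p$, not of all of $\gridn\cap\domD$), but your final attainment argument does not use it.

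Item 3, however, has a genuine gap. From $p\in\partial\utc(x)$ and the Fenchel equality you may only conclude $x\in\partial(\uast+\chi_{\TGn})(p)=\partial\uast(p)+\Nn_{\TGn}(p)$, \emph{not} $x\in\partial\uast(p)$. When $p\in\partial\TGn$ the normal cone is a nontrivial cone, and $x$ may be of the form $x_i+n$ with a single active grid point $x_i$ and $n\neq 0$; it is then not a convex combination of the active points, and your affinity argument says nothing. (Concretely: take $\udisc$ huge at every grid point except one, say $x_1$; then every other grid point $x$ has $\utc(x)=\dotp{x}{p}-\uast(p)$ only for $p$ on a face of $\TGn$, and $x\notin\Conv\{x_1\}$.) The step can be repaired within your framework: if $\abs{\partial\utc(x)}>0$ then $\partial\utc(x)$ has nonempty interior, and since $\partial\TGn$ has empty interior you may choose $p'\in\inte\bigl(\partial\utc(x)\bigr)\cap\inte(\TGn)$, for which $\Nn_{\TGn}(p')=\{0\}$ and your argument goes through — though note that being affine on a segment through $x$ only confines $\partial\utc(x)$ to a line (measure zero), not to a singleton as you claim, which is all you need anyway. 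The paper's proof avoids all of this: from a ball $B(y_0,r)\subseteq\partial\utc(x)$ it gets the strict growth $\utc(x')\geq\utc(x)+\dotp{y_0}{x'-x}+r\norm{x'-x}$, uses the grid separation $\norm{x'-x}\geq\stsize$ to lift the affine function $x'\mapsto\utc(x)+\dotp{y_0}{x'-x}$ by $\alpha r\stsize$ while staying below $\udisc$ at every grid point (below $\udisc[x]$ precisely because $\utc(x)<\udisc[x]$), and contradicts the maximality of $\utc$. You should either adopt that direct route or insert the interior-selection step above.
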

\begin{proof}
  The first point is left to the reader. To prove the second one, consider any slope $y\in \TGn$. There is some $x\in \gridn\cap \domD$ which minimizes $x'\mapsto\udisc[x']-\dotp{y}{x'}$ over $\gridn\cap \domD$ (let $\alpha$ denote the corresponding minimum). Then the affine function $L:x'\mapsto \dotp{y}{x'}+\alpha$ satisfies $L(x')\leq \udisc[x']$ for all $x'\in\gridn\cap \domD$, and $L(x)=\udisc(x)$. Hence, by construction of $\utc$, $L(x')\leq \utc(x')$ for all $x'\in \RR^2$, and $L(x)=\utc(x)$, which means that $y\in \partial\utc(x)$.
  As a result $\TGn\subset \partial \utc(\gridn\cap \domD)$. The fact that $\partial\utc(\RR^2)\subset \TGn$ follows from $\partial\utc(\RR^2)\subset \dom(\utc^*)=\dom(\uast+\chi_{\TGn})$.

  Now, let us prove the third point. Assume by contradiction that $\abs{\partial\utc(x)}>0$. Since ${\partial\utc(x)}$ is convex, it must have nonempty interior, hence there is some $y_0\in \TGn$, $r>0$ such that $B(y_0,r)\subset \partial \utc(x)$. For all $x'\in \RR^2$, all $p\in B(y_0,r)$, $\utc(x')\geq \utc(x) +\dotp{p}{x'-x}$
hence 
  \begin{align*} 
    \utc(x')\geq \utc(x) +\dotp{y_0}{x'-x}+r\norm{x'-x}. 
  \end{align*}
For $x'\in \gridn\cap \domD \setminus\{x\}$, $\udisc[x']\geq \utc(x')$ and $\norm{x'-x}\geq \stsize$. Hence, for $\alpha\in (0,1]$ small enough, the affine function $L:x'\mapsto \utc(x) +\dotp{y_0}{x'-x}+\alpha r\stsize$ satisfies $L(x')\leq \udisc[x']$ for all $x'\in \gridn\cap \domD$, and $\utc(x)<L(x)$ which contradicts the definition of $\utc$.
\end{proof}

The next Lemma describes more specific properties of $\utc$ which follow from the construction of $\udisc$.

\begin{lemma}\label{lem:subgradh}
  The family $\{\utc\}_{n\in\NN}\subset \Cder{}(\RR^2)$ is relatively compact for the topology of uniform convergence on compact sets.
  Moreover, \revision{with $\Vn \eqdef \stsizen^2$,}
  \begin{align}
    \forall x\in \gridn\cap \SC,\quad   \abs{\partial \utc(x)}&\leq \MA(\udisc)[x]\revision{\Vn},\label{ineq:domD}\\
    \forall x\in \gridn\cap (\domD\setminus\SC),\quad   \abs{\partial \utc(x)}&=0.\label{eq:complement}
  \end{align}
\end{lemma}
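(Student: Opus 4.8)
The plan is to prove the three assertions separately, using Proposition~\ref{prop:generalutc}, Lemma~\ref{lem:subgradh2} and Proposition~\ref{prop:udisc}. For \emph{relative compactness}, observe that each $\utc$ is convex and finite-valued (Proposition~\ref{prop:generalutc}, first item) with $\partial\utc(\RR^2)=\TGn$ (second item); since the sets $\TGn$ decrease and $\oTG$ is bounded, they lie in a fixed bounded set, so every $\utc$ is Lipschitz with a common constant $C_0\eqdef\sup_n\sup_{y\in\TGn}\abs{y}$. Moreover $\utc(0)\leq\udisc[0]=0$ by construction, while $\utc(0)=-\inf_{y\in\TGn}\uast(y)$ with $\uast(y)=\sup_{x\in\gridn\cap\domD}(\dotp{y}{x}-\udisc[x])$ bounded above uniformly in $n$, thanks to the uniform estimate~\eqref{eq:lipu} (and $\udisc[0]=0$) together with the boundedness of $\domD$ and of the $\TGn$. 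Thus $\utc(0)$ stays in a fixed compact interval; combining this with the common Lipschitz bound, $\{\utc\}_n$ is equi-Lipschitz and locally uniformly bounded, and Arzel\`a--Ascoli gives relative compactness for uniform convergence on compact sets.

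For the bound~\eqref{ineq:domD}, fix $x\in\gridn\cap\SC$. If $\utc(x)<\udisc[x]$, then $\abs{\partial\utc(x)}=0$ by Proposition~\ref{prop:generalutc} (third item), and~\eqref{ineq:domD} holds trivially since $\MA(\udisc)[x]\geq 0$. Otherwise $\utc(x)=\udisc[x]$; let $\underbar{u}$ be the largest convex l.s.c.\ minorant of $(\udisc[x'])_{x'\in\gridn\cap\domD}$ from Lemma~\ref{lem:subgradh2}. Every affine competitor in the definition of $\utc$ also competes in that of $\underbar{u}$, so $\utc\leq\underbar{u}$ on $\RR^2$; together with $\underbar{u}(x)\leq\udisc[x]=\utc(x)$ this forces $\underbar{u}(x)=\utc(x)$, whence any $p\in\partial\utc(x)$ satisfies $\underbar{u}(\cdot)\geq\utc(\cdot)\geq\utc(x)+\dotp{p}{\cdot-x}=\underbar{u}(x)+\dotp{p}{\cdot-x}$, i.e.\ $p\in\partial\underbar{u}(x)$. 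Hence $\abs{\partial\utc(x)}\leq\abs{\partial\underbar{u}(x)}\leq\MA(\udisc)[x]\,\Vn$ by Lemma~\ref{lem:subgradh2}.

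For the vanishing~\eqref{eq:complement}, fix $x\in\gridn\cap(\domD\setminus\SC)$; again we may assume $\utc(x)=\udisc[x]$, otherwise $\abs{\partial\utc(x)}=0$ directly. Since $\tMAo(\udisc)[x]=0$, all the terms in the minimum are $\geq 0$ and one of them vanishes: there is an irreducible $e^*\in V(x)\cup\setdir$ with $\tilde\Delta^{e^*}\udisc[x]=0$. Were both $x\pm\stsizen e^*$ outside $\domD$, this value would be $\sigtg(\stsizen e^*)+\sigtg(-\stsizen e^*)$, the strictly positive scaled width of $\oTG$ in direction $e^*$ --- impossible --- so at least one neighbour lies in $\domD$, and using $\tilde\Delta^{e^*}=\tilde\Delta^{-e^*}$ together with the symmetry of $\setdir$ we may assume $x+\stsizen e^*\in\domD$. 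If also $x-\stsizen e^*\in\domD$, then $\tilde\Delta^{e^*}\udisc[x]=0$ reads $\udisc[x+\stsizen e^*]+\udisc[x-\stsizen e^*]=2\udisc[x]$; since $\utc(x\pm\stsizen e^*)\leq\udisc[x\pm\stsizen e^*]$ the midpoint inequality $\utc(x)\leq\tfrac12(\utc(x+\stsizen e^*)+\utc(x-\stsizen e^*))$ is an equality, which for the convex function $\utc$ forces it to be affine on $[x-\stsizen e^*,x+\stsizen e^*]$, so $\dotp{p}{e^*}$ is the same for every $p\in\partial\utc(x)$ and $\partial\utc(x)$ lies in a line. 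If instead $x-\stsizen e^*\notin\domD$, then $e^*\in\setdir$ and the relation is $\udisc[x+\stsizen e^*]-\udisc[x]=-\sigtg(-\stsizen e^*)$; using $\utc(x)=\udisc[x]$, $\utc(x+\stsizen e^*)\leq\udisc[x+\stsizen e^*]$ and the subgradient inequality at $x$ we get $\dotp{p}{-e^*}\geq\sigtg(-e^*)$ for every $p\in\partial\utc(x)$, while $p\in\partial\utc(\RR^2)=\TGn$ and $-e^*\in\setdir$ give the opposite inequality; hence $\partial\utc(x)\subseteq\{p:\dotp{p}{-e^*}=\sigtg(-e^*)\}$ is contained in a line. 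In either case $\abs{\partial\utc(x)}=0$.

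The delicate part is the last claim~\eqref{eq:complement}: one must track the modified differences $\tilde\delta$ carefully near $\partial\domD$, discard the degenerate configuration where both neighbours of $x$ leave $\domD$, and exploit the symmetry $\tilde\Delta^{e^*}=\tilde\Delta^{-e^*}$ (and $\setdir=-\setdir$) so that the boundary direction appearing in the estimate is one of the constraint directions of $\TGn$. By contrast, \eqref{ineq:domD} --- which is the real engine of the convergence proof --- reduces, once $\utc$ is compared with the unconstrained convex minorant $\underbar{u}$, to the appendix estimate of Lemma~\ref{lem:subgradh2}, and the compactness statement is routine.
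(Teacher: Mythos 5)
Your proof is correct and follows essentially the same route as the paper's: equi-Lipschitz bounds from $\partial\utc(\RR^2)=\TGn\subset\TG_0$ plus a bound at the origin for Ascoli--Arzel\`a, the comparison $\utc\leq\underbar{u}$ with equality at $x$ to transfer Lemma~\ref{lem:subgradh2} into~\eqref{ineq:domD}, and the dichotomy on whether both points $x\pm\stsizen e^*$ lie in $\domD$ for the direction achieving $\tMAo(\udisc)[x]=0$. The only (cosmetic) difference is in~\eqref{eq:complement}, where you show directly that $\partial\utc(x)$ is contained in a line, whereas the paper argues by contradiction from a ball $B(y_0,r)\subset\partial\utc(x)$; your version also makes explicit the exclusion of the configuration where both neighbours leave $\domD$, which the paper asserts implicitly.
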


\begin{proof}
  We observe that $(\utc)_{n\in\NN}$ is uniformly equicontinuous by Proposition~\ref{prop:generalutc} and the fact that $\TGn\subset \TG_{0}$. Moreover, from Proposition~\ref{prop:udisc} and $\udisc(0)=0$, we deduce that 
  \begin{equation*}
  \forall n\in\NN, \forall x\in \gridn\cap \domD,\quad \abs{\udisc[x]}\leq C\sup_{x'\in\domD}\normu{x'}.
  \end{equation*}
  As a result, ${\utc(0)}\in [-C\sup_{x'\in\domD}\normu{x'},0]$. We deduce the claimed compactness by applying the Ascoli-Arzel\`a theorem. 

  The inequality~\eqref{ineq:domD} follows from Lemma~\ref{lem:subgradh2}. Indeed, let $\underbar{u}:\RR^2\rightarrow \RR\cup\{+\infty\}$ be the largest convex l.s.c.\ function which minorizes $\udisc[x]$ at all points $x\in \gridn\cap \domD$. If $\utc(x)<\udisc[x]$, then $\abs{\partial \utc(x)}=0$ and there is nothing to prove. Otherwise, $\utc(x)=\underbar{u}(x)=\udisc[x]$ and $\utc\leq\underbar{u}$ imply that $\partial \utc(x)\subset \partial \underbar{u}(x)$. As a result,
  \begin{align*}
    \forall x\in \gridn\cap \SC,\quad   \abs{\partial \utc(x)}&\leq \abs{\partial\underbar{u}(x)}\leq \MA(\udisc)[x]\revision{\Vn}.
  \end{align*}

 Now, we prove~\eqref{eq:complement}. Let $x\in  \gridn\cap (\domD\setminus\SC)$. Again, if $\utc(x)< \udisc[x]$, then $\abs{\partial \utc(x)}=0$, so we assume that $\utc(x)= \udisc[x]$.
 Assuming by contradiction that $\abs{\partial \utc(x)}>0$ there must exist again some $y_0\in \TGn$, $r>0$ such that $B(y_0,r)\subset \TG$, and $\utc(x')\geq \utc(x) +\dotp{y_0}{x'-x}+r\norm{x'-x}$.

 Since $\min_{e\in\setdir}\tDelt_e \udisc[x]=0$, there exists $e\in \ZZ^2\setminus\{0\}$ such that $\tDelt_e \udisc[x]=0$.
  If $\{x-\stsizen e,x,x+\stsizen e\}\subset \domD$, then 
  \begin{align*}
    0=(\udisc[x+\stsizen e]-\udisc[x])+(\udisc[x-\stsizen e]-\udisc[x])
    &\geq (\utc(x+\stsizen e)-\utc(x))+(\utc(x-\stsizen e)-\utc(x))\\
    &\geq 2r\stsizen \norm{e}>0,
  \end{align*}
  which is impossible. On the other hand, if $x+\stsizen e\notin \domD$ (the case $x-\stsizen e\notin \domD$ is similar), then $x-\stsizen e\in \domD$ and $\udisc[x-\stsizen e]-\udisc[x]=-\stsizen\sigtg(e)$.
  The slope of $\utc$ is monotone in the direction $e$, and it cannot exceed $\sigtg(e/\abs{e})$, hence $\utc(x+te)=\utc(x)+t\sigtg(e)$ for $t\geq -\stsizen$. 
  But this contradicts the inequality
  \begin{align*}
    \utc(x+te)\geq \utc(x) +t\dotp{y_0}{e}+r\abs{t}\norm{e}.
  \end{align*}
We conclude that $\abs{\partial \utc(x)}=0$.
\end{proof}

\subsection{A semi-discrete optimal transport problem}
Let us define the following measures
\begin{equation}
  \mun\eqdef \sum_{x\in \gridn\cap \domD}\Fn[x]\delta_x, \quad \nun\eqdef \sum_{x\in \gridn\cap \domD}g(\gradn\udisc[x])\bun_{\partial \utc(x)}\Ll^2,
\end{equation}
where $\Fn[x]$ is defined for all $x\in \gridn\cap \domD$ as 
\begin{equation}
  \Fn[x]  \eqdef g(\gradn\udisc[x])\abs{\partial\utc(x)}\leq \dtg(\gradn\udisc[x])\MA(\udisc)[x]\Vn=\adsc(x)\Vn,
\end{equation}
$\adsc$ is the grid discretization of $f$ defined in section 4.5  and $\Vn=\stsizen^2$ is the area of one cell of the grid.

Let us note that from Proposition~\ref{prop:generalutc} and Lemma~\ref{lem:subgradh}, the Monge-Ampère measure (see~\cite{gutierrez}) associated with $\utc$ satisfies
\begin{align}
  \label{eq:ma-utc}
  \det(D^2\utc)\eqdef \abs{\partial \utc}=\sum_{x\in \gridn\cap \SC}\frac{\Fn[x]}{g(\gradn\udisc[x])}\delta_x.
\end{align}

\begin{lemma}\label{lem:vC1}
  \revision{There exists $\vlim\in\Cder^{1}(\RR^2)$, and non-negative absolutely continuous measures $\tmu\in\Mm(\RR^2)$, $\tnu\in\Mm(\RR^2)$, such that, up to the extraction of a subsequence,
  \begin{itemize}
    \item the function $\utc$ converges uniformly on compacts sets towards $\vlim$,
    \item  the measures $\mun$ and $\nun$ respectively weakly converge to $\tmu$ and $\tnu$ as $\stsizen\to 0^+$,
    \item $\tmu(\RR^2)=\tnu(\RR^2)>0$, and $\vlim$ is the minimal Brenier solution for the optimal transport of $\tmu$ to $\tnu$.
\end{itemize} }
\end{lemma}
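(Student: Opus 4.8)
The plan is to produce $\vlim$, $\tmu$ and $\tnu$ along a common subsequence by combining the compactness of $\{\utc\}_n$ with elementary mass estimates, domination arguments for absolute continuity, and the stability of optimal transport plans under weak convergence. First, by Lemma~\ref{lem:subgradh} a subsequence of $\{\utc\}_n$ converges locally uniformly to some convex function $\vlim$. For the measures, the key point is that $\mun(\RR^2)=\nun(\RR^2)=\sum_{x\in\gridn\cap\domD}\Fn[x]$: indeed $\nun(\RR^2)=\sum_x g(\gradn\udisc[x])\abs{\partial\utc(x)}=\sum_x\Fn[x]$ because the sets $\partial\utc(x)$ cover $\TGn$ with pairwise $\Ll^2$-negligible overlaps (Proposition~\ref{prop:generalutc}). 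The bound $\Fn[x]\le\adsc(x)\Vn$ gives $\mun(\RR^2)\le\int f=1$, while $g\ge\infg$ gives $\nun(\RR^2)\ge\infg\abs{\TGn}\ge\infg\abs{\oTG}>0$. As $\mun$ is supported in $\domD$ and $\nun$ in $\TG_0$, Prokhorov's theorem yields a further subsequence along which $\mun\rightharpoonup\tmu$ and $\nun\rightharpoonup\tnu$, with $\tmu(\RR^2)=\tnu(\RR^2)=\lim_n\mun(\RR^2)\in[\infg\abs{\oTG},1]$, in particular $>0$.

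Absolute continuity follows by domination. Since the density of $\nun$ is $\le\supg$, one has $\nun\le\supg\,\Ll^2$; passing this to the limit through the Portmanteau theorem and outer regularity gives $\tnu\le\supg\,\Ll^2$, hence $\tnu\ll\Ll^2$. Likewise $\mun\le\bar\mu_n\eqdef\sum_x\bigl(\int_{x+[-\stsizen/2,\stsizen/2]^2}f\bigr)\delta_x$, and $\bar\mu_n\rightharpoonup f\,\Ll^2$, so $\tmu\le f\,\Ll^2\le\supf\,\Ll^2$ and $\tmu\ll\Ll^2$; moreover $\supp\tmu\subseteq\oSC$, and $\supp\tnu\subseteq\oTG$ because $\TGn\to\oTG$ in Hausdorff distance. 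For the reverse inclusion, the lower bound (the density of $\nun$ is $\ge\infg$ on $\TGn$) passes to the limit against nonnegative continuous functions supported in $\inte(\oTG)$ (where $\bun_{\TGn}\to 1$), so $\tnu\ge\infg\,\Ll^2$ there and therefore $\supp\tnu=\oTG$.

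It remains to identify $\vlim$ as the minimal Brenier solution for $(\tmu,\tnu)$. For each $n$, the plan $\pi_n\eqdef\sum_{x\in\gridn\cap\domD}\delta_x\otimes\bigl(g(\gradn\udisc[x])\,\bun_{\partial\utc(x)}\Ll^2\bigr)$ lies in $\Pi(\mun,\nun)$ (its first marginal is $\mun$ since $\nun(\partial\utc(x))=\Fn[x]$; its second marginal is $\nun$ since the sets $\partial\utc(x)$ tile $\TGn=\supp\nun$) and satisfies $\supp\pi_n\subseteq\graph(\partial\utc)$, so it is optimal by Theorem~\ref{thm:villani}(1). Extracting once more, $\pi_n\rightharpoonup\pi\in\Pi(\tmu,\tnu)$; for $(x,y)\in\supp\pi$, picking $(x_n,y_n)\in\supp\pi_n$ with $(x_n,y_n)\to(x,y)$ and passing to the limit in $\utc(\cdot)\ge\utc(x_n)+\dotp{y_n}{\cdot-x_n}$ (using local uniform convergence) shows $y\in\partial\vlim(x)$; hence $\supp\pi\subseteq\graph(\partial\vlim)$ and $\pi$ is optimal for $(\tmu,\tnu)$ by Theorem~\ref{thm:villani}(1). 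Since $\tmu\ll\Ll^2$, Theorem~\ref{thm:villani}(2) forces $\pi=(I\times\nabla\vlim)_\sharp\tmu$ (the canonical optimal potential coincides with $\vlim$ $\tmu$-a.e.\ because its gradient lies in $\partial\vlim$ and $\vlim$ is differentiable $\tmu$-a.e.), whence $(\nabla\vlim)_\sharp\tmu=\tnu$: $\vlim$ is a Brenier solution. To see it is the minimal one, note that $\udisc[0]=0$ gives $\uast\ge0$, so $\utc(x)\le\sup_{y\in\TGn}\dotp{x}{y}$ and, in the limit, $\vlim\le\sigtg$; hence $\vlim^*\ge\sigtg^*=\chi_{\oTG}$, i.e.\ $\partial\vlim(\RR^2)\subseteq\dom\vlim^*\subseteq\oTG$, while Remark~\ref{rem:subdiff} together with $\supp\tnu=\oTG$ gives $\oTG\subseteq\partial\vlim(\RR^2)$; thus $\partial\vlim(\RR^2)=\oTG$. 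Finally, $\tmu$ has density $\le\supf$, $\tnu$ has density $\ge\infg>0$ on the convex set $\oTG$, and $\partial\vlim(\RR^2)=\oTG$, so Theorem~\ref{thm:figallireg} gives $\vlim\in\Cder^1(\RR^2)$.

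The main obstacle is the identification step. One cannot work directly with $\nabla\utc$, which is undefined precisely at the grid points carrying the mass of $\mun$ (there $\abs{\partial\utc(x)}>0$), so the passage to the limit must be carried out at the level of the optimal plans, relying on the stability under weak convergence of the property ``support contained in a subdifferential''. One must also pin down $\partial\vlim(\RR^2)=\oTG$, which crucially uses the identification $\supp\tnu=\oTG$ obtained from the uniform lower bound on the discrete target densities.
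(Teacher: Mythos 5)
Your proof is correct and follows essentially the same route as the paper's: construct the discrete optimal plans supported on $\graph(\partial\utc)$, extract weak-* convergent subsequences, obtain absolute continuity and the mass bounds by domination ($\nun\leq\supg\,\Ll^2\llcorner\TGn$, $\mun\leq\sum_x(\int_{\text{cell}}f)\delta_x$) together with the lower bound $\nun\geq\infg\,\Ll^2\llcorner\TGn$, pass to the limit in the subdifferential inequality to get $\supp\pi\subseteq\graph(\partial\vlim)$, and conclude minimality and $\Cder^1$ regularity via Theorem~\ref{thm:figallireg}. The only (cosmetic) divergence is that you establish $\partial\vlim(\RR^2)\subseteq\oTG$ through the support-function bound $\vlim\leq\sigtg$ rather than by passing $\partial\utc(\RR^2)=\TGn$ directly to the limit, and you make explicit the step $\supp\tnu=\oTG\subseteq\partial\vlim(\RR^2)$ that the paper leaves implicit.
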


\begin{proof}

Let us consider the transport plan $\gamn\in \Mm(\RR^2\times \RR^2)$
\begin{equation*}
  \gamn\eqdef \sum_{x\in \gridn\cap \domD}g(\gradn\udisc[x])\delta_x\otimes \bun_{\partial \utc(x)}\Ll^2
\end{equation*}
We note that $\gamn$ is an optimal transport plan between $\mun$ and $\nun$ since its support is contained in the graph of $\partial \utc$ (see Theorem~\ref{thm:villani}).

Moreover, we observe that 
\begin{align}
  \mun&\leq \Vn\sum_{x\in\SC\cap \gridn} \adsc(x)\delta_x=\sum_{x\in\SC\cap \gridn}\left(\int_{[x-\stsizen/2,x+\stsizen/2]^2}f(x+t)\d t\right)\delta_x, 
\end{align}
where the right hand-side converges towards $\mu$ in the weak-* topology (\ie induced by compactly supported continuous test functions). Additionally, since
\begin{align}
  \nun&\leq \supg \Ll^2\llcorner \TGn,
\end{align}
we deduce that the supports of $\mun$, $\nun$, $\gamn$ are respectively contained in the compact sets $\oSC$, $\TGn$, $\oSC\times \TGn$, and their masses are uniformly bounded. Hence, there exist Radon measures $\tmu$, $\tnu\in \Mm(\RR^2)$, $\tgam\in \Mm(\RR^2\times \RR^2)$ such that,  up to the extraction of a (not relabeled) subsequence,  $\mun$, $\nun$ and $\gamn$ respectively converge to $\tmu$, $\tnu$, $\tgam$ in the weak-* topology.  We note that $\tgam$ has respective marginals $\tmu$ and $\tnu$, and $\tmu$, $\tnu$ have densities with respect to the Lebesgue measure $\tf$, $\tg$ which satisfy
\begin{equation*}
  \tf\leq f \quad \mbox{and}\quad \tg \leq \supg\bun_{\oTG}.
\end{equation*}

Since $g(\gradn\udisc[x])\geq \infg$, we get 
\begin{align*}
  \nun\geq \infg\sum_{x\in\gridn\cap \domD}(\bun_{\partial\utc(x)}\Ll^2)=\infg \Ll^2\llcorner \TGn,
\end{align*}
and in the limit we deduce $\tg\geq \infg\bun_{\oTG}$. As a result, $\tmu(\RR^2)=\tnu(\RR^2)\geq \infg\abs{\TG}>0$.

 As for $\utc$, we already know from Lemma~\ref{lem:subgradh} that we may extract an additional subsequence so that $\utc$ converges uniformly on compact sets to some (convex function) $\vlim\in\Cder{}(\RR^2)$. Since any element of $\supp \tgam$ is the limit of  (a subsequence of) elements $(x_n,y_n$) of $\supp \gamn\subset \partial \utc$, passing to the limit in the subdifferential inequality 
 \begin{align*}
   \forall x'\in \RR^2,\quad \utc(x')\geq \utc(x_n) + \dotp{y_n}{x'-x_n},
 \end{align*}
 we obtain
\begin{equation*}
  \supp \tgam \subset \graph\partial \vlim.
\end{equation*}
As a result, and since $\tmu$ is absolutely continuous with respect to the Lebesgue measure $\Ll^2$, $\tgam=(I\times \nabla \vlim)_\sharp \tmu$ and $\nabla\vlim$ is the optimal transport map between $\tmu$ and $\tnu$.

Additionally, \revision{since $\partial \utc(\RR^2)=\oTGn$, we deduce that $\partial \vlim(\RR^2)\subseteq \oTG=\bigcap_{n\in\NN}\oTGn$. As a result, $\vlim$ is the minimal Brenier solution to the Monge-Amp\`ere problem from $\tmu$ to $\tnu$, hence  $\vlim\in \Cder^{1}(\RR^2)$.}

\end{proof}

\begin{lemma}\label{lem:numu}
  Assume that~\eqref{eq:cvgrad} holds. Then, with the notations of Lemma~\ref{lem:vC1},
$\tmu=\mu$ and $\tnu=\nu$.
\end{lemma}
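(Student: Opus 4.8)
The plan is to show that the inequalities $\tmu\leq\mu$ and $\tnu\leq\supg\bun_{\oTG}$ obtained in the proof of Lemma~\ref{lem:vC1} are in fact equalities, and that $\tnu=\nu$, by a mass-conservation argument combined with the identification of the density of $\tnu$. First I would recall that $\nabla\vlim$ is the optimal transport map from $\tmu$ to $\tnu$ (Lemma~\ref{lem:vC1}) and that, since $\vlim\in\Cder^1(\RR^2)$, the push-forward $(\nabla\vlim)_\sharp\tmu=\tnu$ holds in the strong sense. The key observation is that~\eqref{eq:cvgrad} lets us pass to the limit in the relation $\Fn[x]=g(\gradn\udisc[x])\abs{\partial\utc(x)}$ defining $\mun$: since $\gradn\udisc[x]$ is uniformly close to the set $\partial\utc(x)$ (which for grid points in $\SC$ concentrates, in the limit, around $\nabla\vlim(x)$), and since $g$ is continuous near $\oTG$, the ``reaction term'' $g(\gradn\udisc[x])$ appearing in both $\mun$ and $\nun$ converges, along the subsequence, to $g(\nabla\vlim(\cdot))$.

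The main steps, in order, are as follows. (1) Use~\eqref{eq:cvgrad}, the uniform convergence $\utc\to\vlim$, and the $\Cder^1$ regularity of $\vlim$ to show that for any continuous test function $\varphi$,
\[
  \int \varphi\, \d\tmu = \lim_n \sum_{x\in\gridn\cap\SC}\varphi(x)\,g(\gradn\udisc[x])\,\abs{\partial\utc(x)}
  = \lim_n \sum_{x\in\gridn\cap\SC}\varphi(x)\,g(\nabla\vlim(x))\,\abs{\partial\utc(x)} + o(1),
\]
the correction being controlled by the modulus of continuity of $g$ and~\eqref{eq:cvgrad}. (2) Compare this with the weak limit of the Monge--Ampère measure $\det(D^2\utc)=\sum_{x\in\gridn\cap\SC}\frac{\Fn[x]}{g(\gradn\udisc[x])}\delta_x$ from~\eqref{eq:ma-utc}: since $\vlim\in\Cder^1$ and $\utc\to\vlim$ uniformly on compacts, a stability result for Monge--Ampère measures (e.g.~\cite{gutierrez}) gives $\det(D^2\utc)\rightharpoonup \det(D^2\vlim)$; but $\frac{\Fn[x]}{g(\gradn\udisc[x])}=\adsc(x)\Vn$ by construction of the scheme in $\SC$, and $\sum_x\adsc(x)\Vn\,\delta_x\rightharpoonup\mu$. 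Hence $\det(D^2\vlim)=\mu\llcorner\SC$ in the sense of Monge--Ampère measures, so $\abs{\partial\vlim(E)}$-type estimates tie $\tmu$ to $\mu$. (3) Combining (1) with the identity $\tnu=(\nabla\vlim)_\sharp\tmu$ and the change of variables formula for the $\Cder^1$ Brenier solution, deduce that $\tnu$ has density $g$ on $\partial\vlim(\SC)$; then use $\partial\vlim(\RR^2)=\oTG$ together with $\tmu(\RR^2)=\tnu(\RR^2)$ and the already-known bound $\tg\geq\infg\bun_{\oTG}$ to force $\tnu=g\,\Ll^2\llcorner\oTG=\nu$ and correspondingly $\tmu=\mu$. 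A clean way to organize (2)--(3) is: show $\det(D^2\vlim)$ has the \emph{same} total mass as $\mu$ and is dominated by $\mu$ (from $\tf\leq f$ and the $\SC$-supported structure), whence equality; then transport this equality through $\nabla\vlim$.

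The step I expect to be the main obstacle is making rigorous the passage from ``$\gradn\udisc[x]$ close to $\partial\utc(x)$'' to ``$g(\gradn\udisc[x])$ replaceable by $g(\nabla\vlim(x))$'' \emph{uniformly enough to control the sum}, because $\partial\utc(x)$ need not be a singleton and the grid point $x$ need not be a point of differentiability of $\vlim$ before passing to the limit. The resolution is that~\eqref{eq:cvgrad} is stated precisely as $\sup_{x\in\oSC\cap\gridn}\sup_{y\in\partial\utc(x)}\abs{\gradn\udisc[x]-y}\to 0$, so $\gradn\udisc[x]$ is within $o(1)$ of \emph{every} element of $\partial\utc(x)$; combined with the fact that, for the limit, $\abs{\partial\utc(x)}$-weighted sums localize onto the graph of the continuous map $\nabla\vlim$ (by the argument $\supp\tgam\subset\graph\partial\vlim$ already used), the substitution error is bounded by $\omega_g\!\big(\sup_x\sup_{y\in\partial\utc(x)}\abs{\gradn\udisc[x]-y}\big)\cdot\mun(\RR^2)\to 0$, where $\omega_g$ is the modulus of continuity of $g$. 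Once this substitution is justified, the remaining identifications are bookkeeping with weak convergence and the uniqueness/regularity already established in Lemma~\ref{lem:vC1} and Theorem~\ref{thm:figallireg}.
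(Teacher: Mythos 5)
Your plan hinges, in step (2), on the identity $\frac{\Fn[x]}{g(\gradn\udisc[x])}=\adsc(x)\Vn$, i.e.\ $\abs{\partial\utc(x)}=\MA(\udisc)[x]\Vn$ for $x\in\gridn\cap\SC$. This is not available: Lemma~\ref{lem:subgradh2} says that MA-LBR \emph{overestimates} the subgradient, so one only knows $\abs{\partial\utc(x)}\leq \MA(\udisc)[x]\Vn$, and this is precisely why Lemma~\ref{lem:vC1} only yields the one-sided bounds $\mun\leq \Vn\sum_x\adsc(x)\delta_x$ and $\tf\leq f$. Consequently the conclusion of your step (2), that $\det(D^2\vlim)$ coincides with $\mu$ on $\SC$, does not follow (one only gets an inequality), and step (3), which feeds this equality through the change of variables to identify $\tg$ with $g$, collapses; the fallback you mention ($\tg\geq\infg\bun_{\oTG}$ together with equality of total masses) cannot force $\tg=g$ either. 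As written, the argument does not close.

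The fix --- and the paper's actual route --- is to run your $\omega_g$-substitution on the \emph{target} side rather than on $\mun$. For a.e.\ $y\in\oTG\subset\TGn$ there is a unique $x\in\gridn\cap\domD$ with $y\in\partial\utc(x)$, and by Lemma~\ref{lem:subgradh} that $x$ lies in $\oSC$; the density of $\nun$ at such a $y$ is exactly $g(\gradn\udisc[x])$, and since $y$ itself belongs to $\partial\utc(x)$, hypothesis~\eqref{eq:cvgrad} gives
\begin{equation*}
  \abs{g(\gradn\udisc[x])-g(y)}\leq\omega_g\left(\sup_{x'\in\oSC\cap\gridn}\ \sup_{y'\in\partial\utc(x')}\abs{\gradn\udisc[x']-y'}\right)\longrightarrow 0
\end{equation*}
uniformly in $y$. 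Hence the density of $\nun$ converges to $g$ in $L^\infty(\oTG)$, which together with the Hausdorff convergence $\TGn\to\oTG$ gives $\tnu=\nu$ directly. Only then does mass conservation, $\tmu(\RR^2)=\tnu(\RR^2)=\nu(\RR^2)=\mu(\RR^2)$, combined with the one-sided bound $\tf\leq f$, give $\tmu=\mu$. Your closing paragraph contains exactly the right estimate, but applies it to the wrong measure: no stability of Monge-Amp\`ere measures and no identification of $\det(D^2\vlim)$ is needed, only the fact that $\gradn\udisc[x]$ is uniformly close to every element of $\partial\utc(x)$, in particular to the point $y$ at which the density of $\nun$ is being evaluated.
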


\begin{proof} 
  For all $y\in \oTG\subset\TGn$, there exists $x\in \gridn\cap \domD$ such that $y\in \partial\utc(x)$. Except for a set of $y$ with zero Lebesgue measure, that $x$ is unique~\cite[Lemma 1.1.12]{gutierrez}, and $x\in\oSC$ (see Lemma~\ref{lem:subgradh}).
  Then, denoting by $\omega_g$ the modulus of continuity of $g$ over \revision{some neighborhood of} $\TG_{0}$, we get
\begin{align*}
  \abs{g(\gradn \udisc[x])-g(y)}\leq \omega_g\left(\max_{x'\in\gridn\cap \oSC}\max_{y'\in\partial \utc(x_n')}\abs{\gradn \udisc[x'])-y'} \right)\longrightarrow 0
\end{align*}
as $n\to+\infty$, where the convergence of the right-hand side follows from~\eqref{eq:cvgrad}.

As a result the following convergence holds
\begin{equation}\label{eq:gcvu}
    \lim_{n\to+\infty}\norm{\sum_{x\in \gridn\cap \domD}g(\gradn\udisc[x])\bun_{\partial \utc(x)}-g}_{L^\infty(\oTG)}=0.
  \end{equation}

  We deduce that $\tg\equiv g$ on $\oTG$ and from~\eqref{eq:gcvu} and~\eqref{eq:hausdorff} we obtain
\begin{align*}
  \int_{\RR^2}\tf(x)\d x&= \lim_{n\to+\infty}\mun(\RR^2)\\
                        &=\lim_{n\to+\infty}\nun(\RR^2)\\
                        &= \lim_{n\to+\infty} \int_{\TGn}\left(\sum_{x\in \gridn\cap \domD}g(\gradn\udisc[x])\bun_{\partial \utc(x)}(y)\right)\d y\\
                        &= \int_{\oTG}g(y)\d y =\int_{\RR^2} f(x)\d x.
\end{align*}
Since, $\tf\leq f$, we get $f=\tf$.
\end{proof}

To sum up, we are now in position to prove the following result.

\begin{theorem}\label{thm:convergence}
  As $n\to+\infty$, the function $\utc$ converges uniformly on compact sets towards the unique minimal Brenier solution $\ue$ which satisfies $\ue(0)=0$.
\end{theorem}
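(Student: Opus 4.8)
The plan is to argue by compactness and identification of the limit, then to conclude with the subsequence principle. Since $\Cder{}(\RR^2)$ with the topology of uniform convergence on compact sets is metrizable, it suffices to show that every subsequence of $(\utc)_{n\in\NN}$ admits a further subsequence converging locally uniformly to \emph{one and the same} function, namely the minimal Brenier solution $\ue$ normalized by $\ue(0)=0$.

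First I would fix an arbitrary subsequence and apply Lemmas~\ref{lem:subgradh} and~\ref{lem:vC1} to extract a further, non-relabelled, subsequence along which $\utc\to\vlim$ locally uniformly, the measures $\mun$ and $\nun$ converge weakly-$*$ to absolutely continuous measures $\tmu$ and $\tnu$ with $\tmu(\RR^2)=\tnu(\RR^2)>0$, and $\vlim$ is the minimal Brenier solution for the transport of $\tmu$ to $\tnu$. The key point, which is what keeps the whole argument from being circular, is that Lemma~\ref{lem:vC1} already yields $\vlim\in\Cder^1(\RR^2)$ \emph{irrespective of} the precise identity of $\tmu$ and $\tnu$, because those limiting densities satisfy the hypotheses of Theorem~\ref{thm:figallireg}. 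This $\Cder^1$ regularity is exactly what is needed for the gradient consistency discussed in Section~\ref{sec:gradapprox} and proved in Appendix~\ref{sec:cvfindiff}: along this subsequence, \eqref{eq:cvgrad} holds. Then Lemma~\ref{lem:numu} applies and gives $\tmu=\mu$ and $\tnu=\nu$, so $\vlim$ is a minimal Brenier solution for the transport of $\mu$ to $\nu$; by Proposition~\ref{prop:defue} and Remark~\ref{rem:unique}, it coincides with $\ue$ up to an additive constant, $\vlim=\ue+c$.

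The remaining step I would carry out is to pin down the constant, that is, to show $\vlim(0)=0$ (here the assumption $0\in\SC$ is used). Because $\vlim$ is a Brenier solution between $\mu$ and $\nu$ and $\TG$ is convex, it is also an Aleksandrov solution, so for every Borel set $E\subset\SC$ one has $\det(D^2\vlim)(E)=\abs{\partial\vlim(E)}\geq \frac{1}{\supg}\,\nu(\partial\vlim(E))=\frac{1}{\supg}\,\mu(E)\geq \frac{\inff}{\supg}\,\abs{E}$; in particular $\det(D^2\vlim)(B)>0$ for any ball $B=B(0,\varepsilon)$ with $\varepsilon$ small enough that $B\subset\SC$. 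Since $\utc\to\vlim$ locally uniformly with all the functions convex, the Monge-Amp\`ere measures converge weakly-$*$ (see~\cite{gutierrez}), hence $\liminf_n \det(D^2\utc)(B)\geq \det(D^2\vlim)(B)>0$; by~\eqref{eq:ma-utc}, for $n$ large there is a grid point $x_n\in B\cap\gridn$ with $\abs{\partial\utc(x_n)}>0$, and therefore $\utc(x_n)=\udisc[x_n]$ by the third item of Proposition~\ref{prop:generalutc}. Using $\udisc[0]=0$ and the uniform Lipschitz estimate~\eqref{eq:lipu} one gets $\abs{\udisc[x_n]}\leq C\normu{x_n}\leq C\sqrt 2\,\varepsilon$, and from the bound $\abs{\vlim(0)}\leq\abs{\vlim(0)-\vlim(x_n)}+\norm{\vlim-\utc}_{L^\infty(B(0,1))}+\abs{\udisc[x_n]}$, letting $n\to\infty$ with $\varepsilon$ fixed yields $\abs{\vlim(0)}\leq \sup_{\abs{z}\leq\varepsilon}\abs{\vlim(0)-\vlim(z)}+C\sqrt 2\,\varepsilon$, and finally letting $\varepsilon\to 0$ gives $\vlim(0)=0$, so $\vlim=\ue$. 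Since every subsequence of $(\utc)_{n\in\NN}$ has a further subsequence converging locally uniformly to this same $\ue$, the whole sequence converges to $\ue$, uniformly on compact sets.

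I expect the genuinely delicate ingredient to be the gradient consistency~\eqref{eq:cvgrad} invoked above: the identification $\tmu=\mu$ rests entirely on it, and its proof (carried out in the appendix) must exploit the discrete-convexity properties of the MA-LBR scheme, the uniform gradient bounds of Proposition~\ref{prop:udisc}, and the $\Cder^1$ regularity of the limit. Within the present theorem the only real subtlety is to verify that the implication chain ``$\vlim\in\Cder^1$, then~\eqref{eq:cvgrad}, then $\tmu=\mu$'' is not circular, which is precisely what the unconditional $\Cder^1$ conclusion of Lemma~\ref{lem:vC1} guarantees; the rest is bookkeeping with the Ascoli--Arzel\`a compactness, the uniqueness from Proposition~\ref{prop:defue}, and the normalization.
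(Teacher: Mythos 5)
Your proposal is correct and follows essentially the same route as the paper's proof: extraction of a subsequence via Lemmas~\ref{lem:vC1} and~\ref{lem:numu}, identification of the limit with $\ue$ up to a constant by Proposition~\ref{prop:defue}, and pinning down the constant by combining the lower bound on the Monge--Amp\`ere measure of a small ball around $0$ with the weak convergence of Monge--Amp\`ere measures, Proposition~\ref{prop:generalutc}, and the Lipschitz estimate~\eqref{eq:lipu}. Your explicit remarks on the subsequence principle and on why the chain ``$\Cder^1$ regularity $\Rightarrow$~\eqref{eq:cvgrad} $\Rightarrow$ $\tmu=\mu$'' is not circular are merely a more detailed rendering of what the paper leaves implicit.
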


\begin{proof}
  Up to the extraction of a sequence, we obtain from Lemma~\ref{lem:vC1} and~\ref{lem:numu} that $\utc$ converges uniformly on compact sets towards some convex function $\vlim\in\Cder^{1}(\RR^2)$ which satisfies in the Brenier sense
  \begin{align*}
    \det(D^2\vlim)= f(x)/g(\nabla v(x)), \quad \mbox{and}\quad \partial \vlim(\RR^2)=\oTG.
  \end{align*}
However, the function $v$ may depend on the choice of the subsequence. By Proposition~\ref{prop:defue} which states the uniqueness of the minimal Brenier solution up to an additive constant, we can prove that $v=\ue$  by proving that $v(0)=0$. Then we obtain that the full family $(\utc)_{n\in\NN}$ converges towards $\ue$ by uniqueness of the cluster point.

We know that $v$ is a solution to the Monge-Ampère equation, hence (in our setting) also in the Aleksandrov sense.
As a result, for all $r>0$, $\int_{B(0,r)}f\d \Ll^2= \int_{\partial v(B(0,r))}g\d\Ll^2$, and
\begin{align*}
  0<\inff\abs{B(0,r)} \leq \supg\abs{\partial v(B(0,r))}.
\end{align*}
But by weak-convergence of the Monge-Ampère measures~\cite[Lemma~1.2.2]{gutierrez},
\begin{align}
  \label{eq:}
  \abs{\partial v(B(0,r))}\leq \liminf_{n\to+\infty} \abs{\partial \utc\left(B(0,r)\right)}=\liminf_{n\to+\infty} \abs{\partial \utc\left(B(0,r)\cap \gridn\right)}
\end{align}
As a result, for all $n$ large enough, there exists $x_{n,r}\in B(0,r)\cap \gridn$ such that $\abs{\partial \utc(x_{n,r})}>0$, and thus, by Proposition~\ref{prop:generalutc}, $\utc(x_{n,r})=\udisc[x_{n,r}]$. By a diagonal argument we construct a sequence $x_{n}^*$ such that $x_{n}^*\to 0$ and $\utc(x_{n}^*)=\udisc[x_{n}^*]$. By Proposition~\ref{prop:udisc}
\begin{align}
  \abs{\utc(x_{n}^*)}=\abs{\udisc[x_{n}^*]} \leq C\normu{x_{n}^*}.
\end{align}
Passing to the limit $n\to+\infty$, we get $v(0)=0$. As a result $v=\ue$ and the full family converges towards $\ue$.
\end{proof}

\section{Numerical study } 
\label{sec:numerics}

The numerical method, as we implemented it, depends on two main parameters~: 
\begin{itemize} 
\item $h$ the step size of the  cartesian grid discretizing the square $D$ wich contains the support of $f$. 
\item The stencil width, i.e. the maximum norm of the vectors on the grid $\ZZ^2$  which will be used 
in the variational schemes.  More precisely we use superbases with 
vector with maximum norm given by the stencil width in (\ref{eq:SB}), we use the same set of vectors for 
$V(x)$ in (\ref{eq:SB}) and finally we also use the same vectors to define $\sten$ in (\ref{eq:discrTG}) for 
the discretization of the target. 
\end{itemize} 

There are also parameters linked to the precision of the damped Newton algorithm, but they have 
a very limited impact on the efficiency of the method. \\

We run the Newton method until the residual is stationary in $\ell^\infty$ norm which usually takes a few dozen interations. The value of the residual 
depends on the discretization parameter but in all of our experiments was always between 1e-10 and 1e-15. \\

The target density $g$ is defined in all $\R^2$ using a constant extension.  In practice 
we initialized with a potential $u$ which gradient maps inside the target $\TG$ but it cannot 
be excluded to hit out along the iterations.

\revision{%
  \subsection{Imposing the constraint $\udisc[0]=0$}
  A straightforward implementation of~\eqref{eq:systemMAdisc} yields a non-square nonlinear system. When performing the Newton iterations with a Moore-Penrose pseudo-inverse of the Jacobian, we have observed convergence towards a function which approximately solves the Monge-Amp\`ere equation in $\SC$, but yields a stronger error in $\domD\setminus\SC$. In other words, 
  \begin{align}
    \label{eq:error}
    \mathcal{E}[x]\eqdef  \begin{cases}
    \MA(\udisc)[x] - \frac{\adsc[x]}{\dtg((\gradn\udisc)[x])} = 0 & \mbox{ if $x\in \SC$}      \\
    \tMAo(\udisc)[x]=0 & \mbox{ if  $x\in D\setminus\SC$}      \\
  \end{cases}
  \end{align}
  is small in $\SC$, but may be large in some parts of $\domD\setminus \SC$ (see Figure~\ref{error}, left). 

  An alternative way to impose the condition  $\udisc[0]=0$ is to rely on the mass conservation property. 
  Instead of trying to cancel $(\mathcal{E}[x])_{x\in \gridn\cap \domD}$ and $\udisc[0]$ separately, one tries to cancel $(\mathcal{E}[x]+\udisc[0])_{x\in \gridn\cap \domD}$. In principle, the mass conservation then implies that $\udisc[0]=0$. However, at the discrete level, the mass conservation is only approximate, since the densities and the target $\oTG$ are discretized. As a result $\udisc[x_0]$ is not exactly $0$ in practice.

  Numerically, we have noticed that this alternative approach has better convergence properties, and yields an error which is smaller and uniformly spread (see Figure~\ref{error}, right). In fact, up to some small residual, the error $\mathcal{E}[X]$ is uniformly equal to $\udisc[0]$ which is slightly different from zero because of the inexact mass conservation.
  Although the straightforward approach has similar consistancy properties as the alternative one when $\stsize\to 0$, in the following we describe the numerical results of the alternative approach, as it shows better numerical properties.
  We refer to the quantity $\left(\mathcal{E}[x]+\udisc[0]\right)_{x\in \gridn\cap \domD}$ as the residual error \emph{Res}.
  
\begin{figure}
\centering
\includegraphics[width=0.45\linewidth]{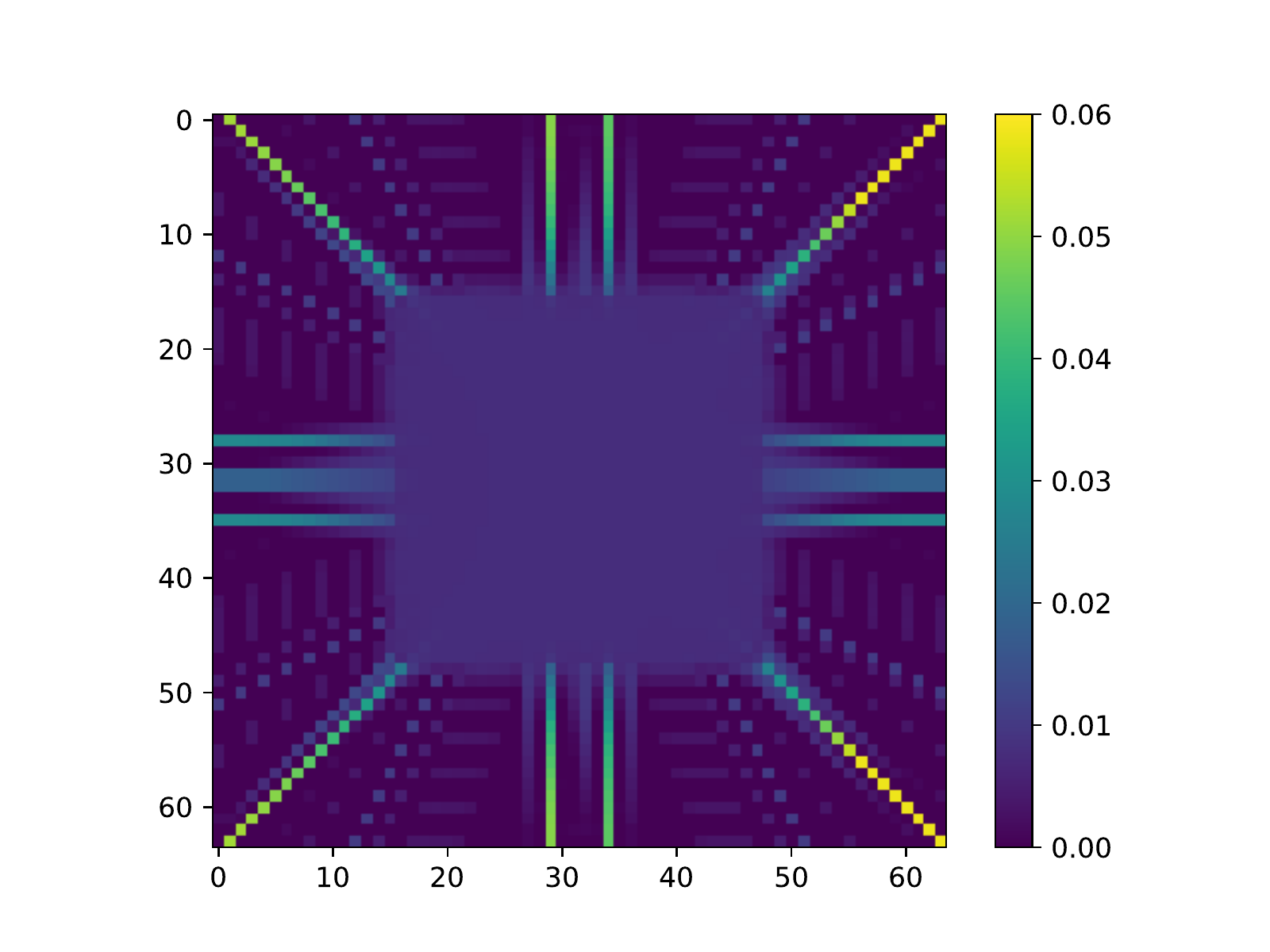}
\includegraphics[width=0.45\linewidth]{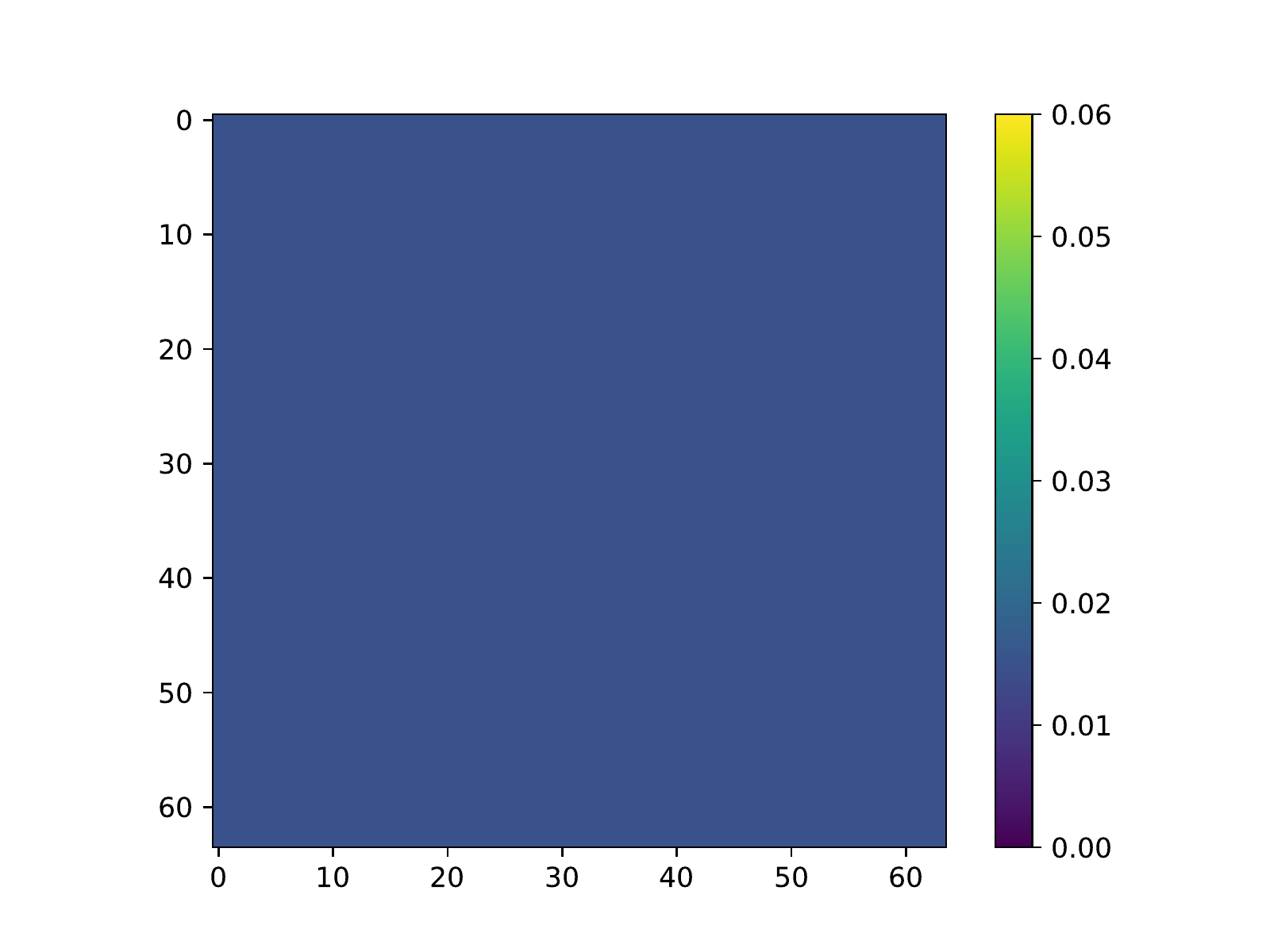} 
\caption{Error $\mathcal{E}[x]$ when mapping a square to a disc. The plain implementation of~\eqref{eq:systemMAdisc} yields some high error in $\domD\setminus \oSC$ (left), whereas the alternative approach yields a uniform small error.  }
\label{error} 
\end{figure}

 }

\subsection{ Experiment with $h$ } 
\label{sec:exph}
A standard test is to optimally map a uniform density square to a uniform density circle. 
We use here a  stencil width of $5$ (\ie{} we use vectors on the grid $\ZZ^2$ of maximum $\ell^\infty$ norm $5$). \\


$\SC$ is a square $[0.2,0.8]^2 $ immersed in a bigger square $D = [0, 1]^2$. \\

Table \ref{tbl1} shows, for different values of $h$, the number of iterations 
the norms of the reached residuals and the value of $\udisc[0]$. \\


Figure~\ref{sq2ball} shows the deformation of the computational grid under the gradient 
map. Notice that the grid volume is well preserved and that all grid points in $D\setminus \SC$ 
are collapsed onto the boundary of $\TG$ which is approximated as a polygon (see next section). 

\begin{table}[h]
  \begin{center}
    \renewcommand{\arraystretch}{1.2}
\begin{tabular}{ | c | c | c | c | c | }
\hline    $N = \frac{1}{h} $   &  \# It.  &   $\| Res. \|_{\infty}$  & $\| Res. \|_{2}$  & $\udisc[0]$   \\
\hline                 128             &     32       &         4e-12                         &      7e-13                      &    1.4e-03     \\
\hline                 256           &      72    &         3e-12                         &      3e-11                      &    4.5e-02     \\
\hline                 512             &     103      &         1e-10                         &      1e-11                     &    1.7e-02     \\
\hline                 1024             &     209      &         3e-10                         &      5e-11                      &    1.1e-02     \\
\hline          
\end{tabular} 
\end{center}
\caption{ Number of iterations / Norms of the reached residuals / Value of forced constant } 
\label{tbl1} 
\end{table} 
 
On a standard Laptop a non optimized Julia implementation needs less than 5 minutes to solve the \revision{$512 \times 512$} case and 
 1.5 hours for the $1024 \times 1024$ case. 
 
\begin{figure}
\centering
\includegraphics[width=0.45\linewidth]{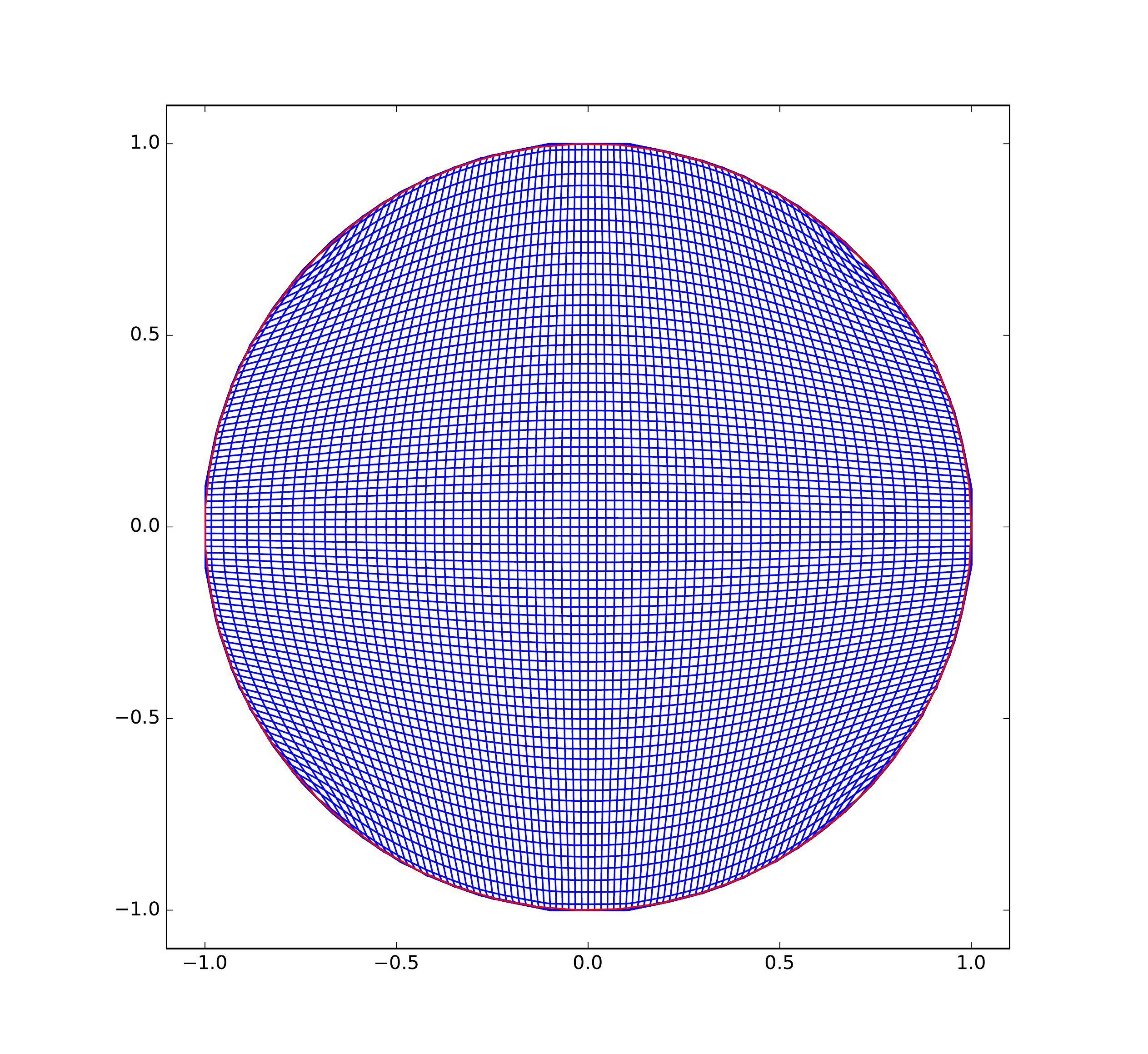}
\includegraphics[width=0.45\linewidth]{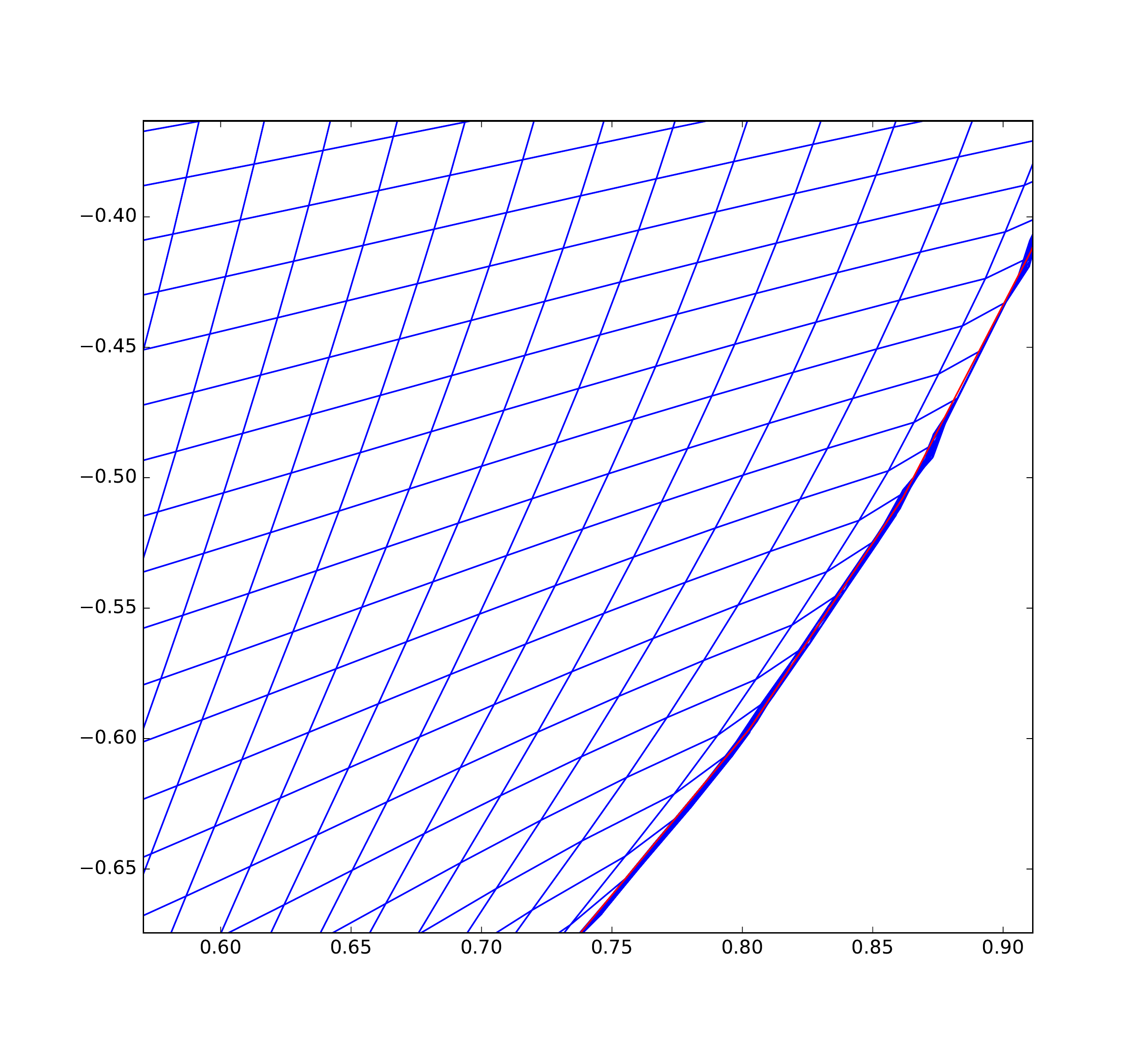} 
\caption{Left: Map deformation of the cartesian grid for square to ball test case. Right: Zoom of the Map deformation of the cartesian grid (as expected, $D\setminus \SC$ is mapped to $\partial \TG$). }
\label{sq2ball} 
\end{figure}

 \subsection{ Experiments with stencil width} 
 \label{sec:stencil}
In all this section $N = 128$, and we vary the stencil width, meaning the number of vector on the  
grid used in scheme. The target is an heptagon whose normals directions 
 are not necessarily vectors in the stencil.  \\

Table \ref{tbl2} shows, for increasing values of the stencil width , the number of iterations 
the norms of the reached residuals and the value of the $\udisc[0]$, which 
decreases as the accuracy of the discretization of  $\TG$ improves. \\

Figure \ref{sq2hpt2} to   \ref{sq2hpt16}  shows the deformation of the computational grid under the gradient 
map and zooms. They show how the geometry of the computed target improves with 
the domain discretization which also depends on the stencil width.  Notice again 
that grid volume is well preserved and that all grid points in $D\setminus \SC$ 
are collapsed onto the boudary of $\TG$.

In Section~\ref{sec:nonconvex} below, we provide another experiment which shows that the MA-LBR schemes uses very small stencil widths.

\begin{table}[h]
  \begin{center}
\begin{tabular}{ | c | c | c | c | c | }
\hline    S. width   &  \# It.  &   $\| Res. \|_{\infty}$  & $\| Res. \|_{2}$  & $\udisc[0]$   \\
\hline                 2           &     32       &         9e-12                         &      1.5e-12                      &    1.5e-01     \\
\hline                 4           &      42    &         9e-12                         &      6e-12                      &    6.4e-02     \\
\hline                 8             &     39    &         8e-12                        &      4e-12                     &    3.9e-02     \\
\hline                 16             &     42     &         4e-12                         &      7e-13                      &    2.2e-02     \\
\hline          
\end{tabular} 
\end{center}
\caption{  Stationary residual reached in $\ell^\infty$ and $\ell^2$ norms / Number of iterations  to reach stationary minimal residual   / Value of forced constant}
\label{tbl2} 
\end{table}

\begin{figure}[hp]
\centering
\includegraphics[width=0.45\linewidth]{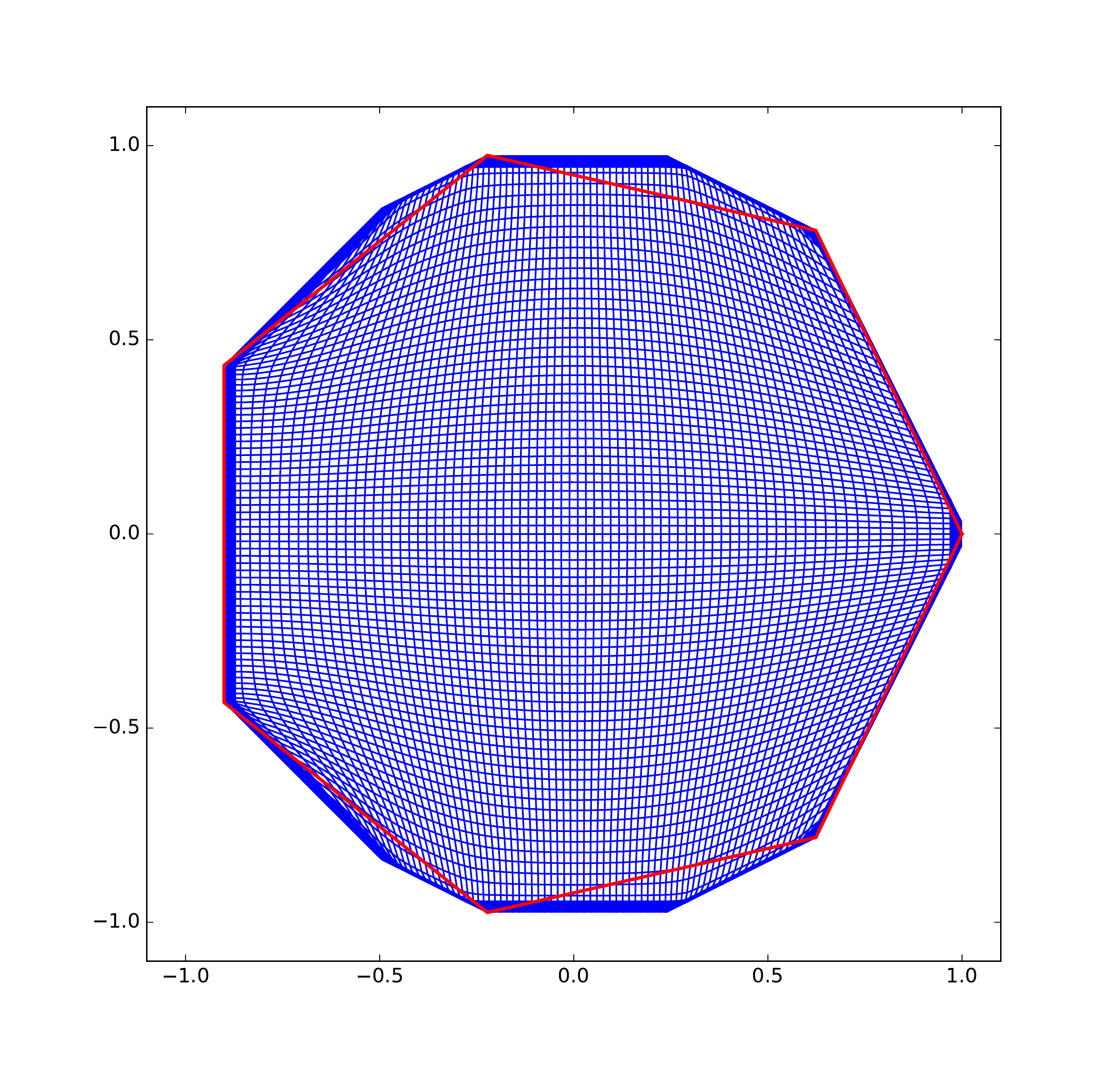} 
\includegraphics[width=0.45\linewidth]{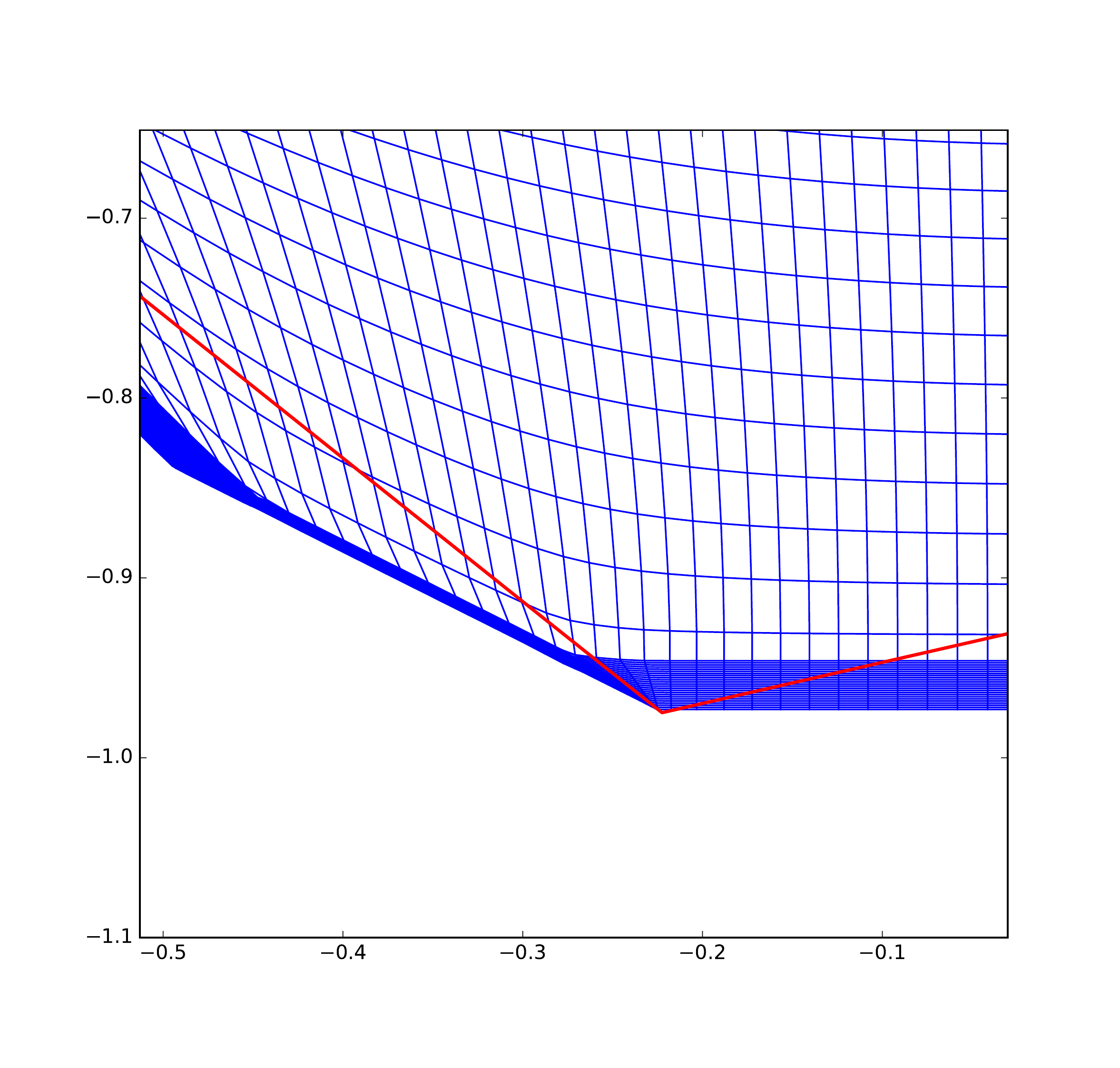} 
\caption{Left: Mapping a square to heptagon, Stencil width = 2. Right: zoom. }
\label{sq2hpt2} 
\end{figure}

\begin{figure}[hp]
\centering
\includegraphics[width=0.45\linewidth]{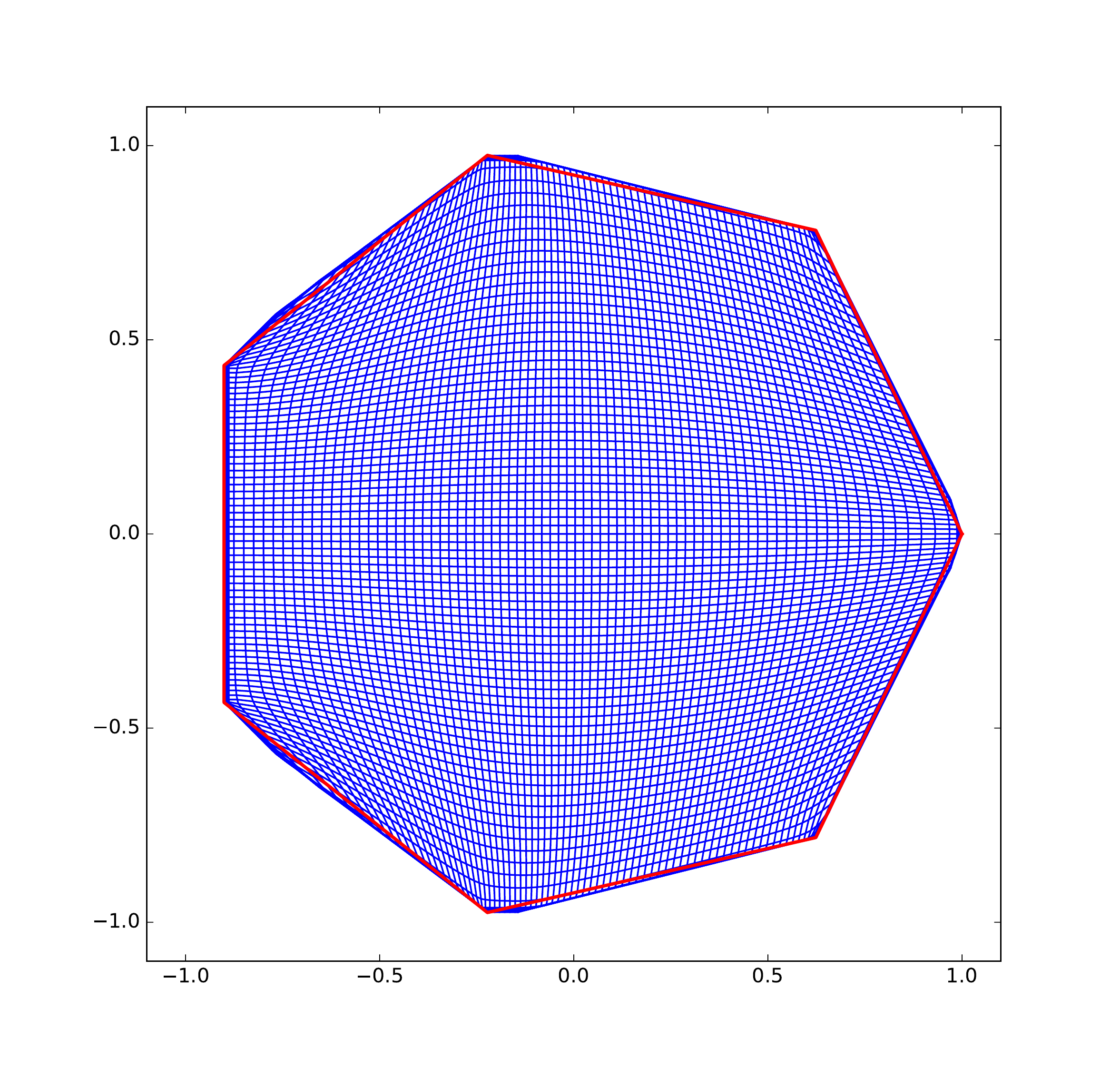} 
\includegraphics[width=0.45\linewidth]{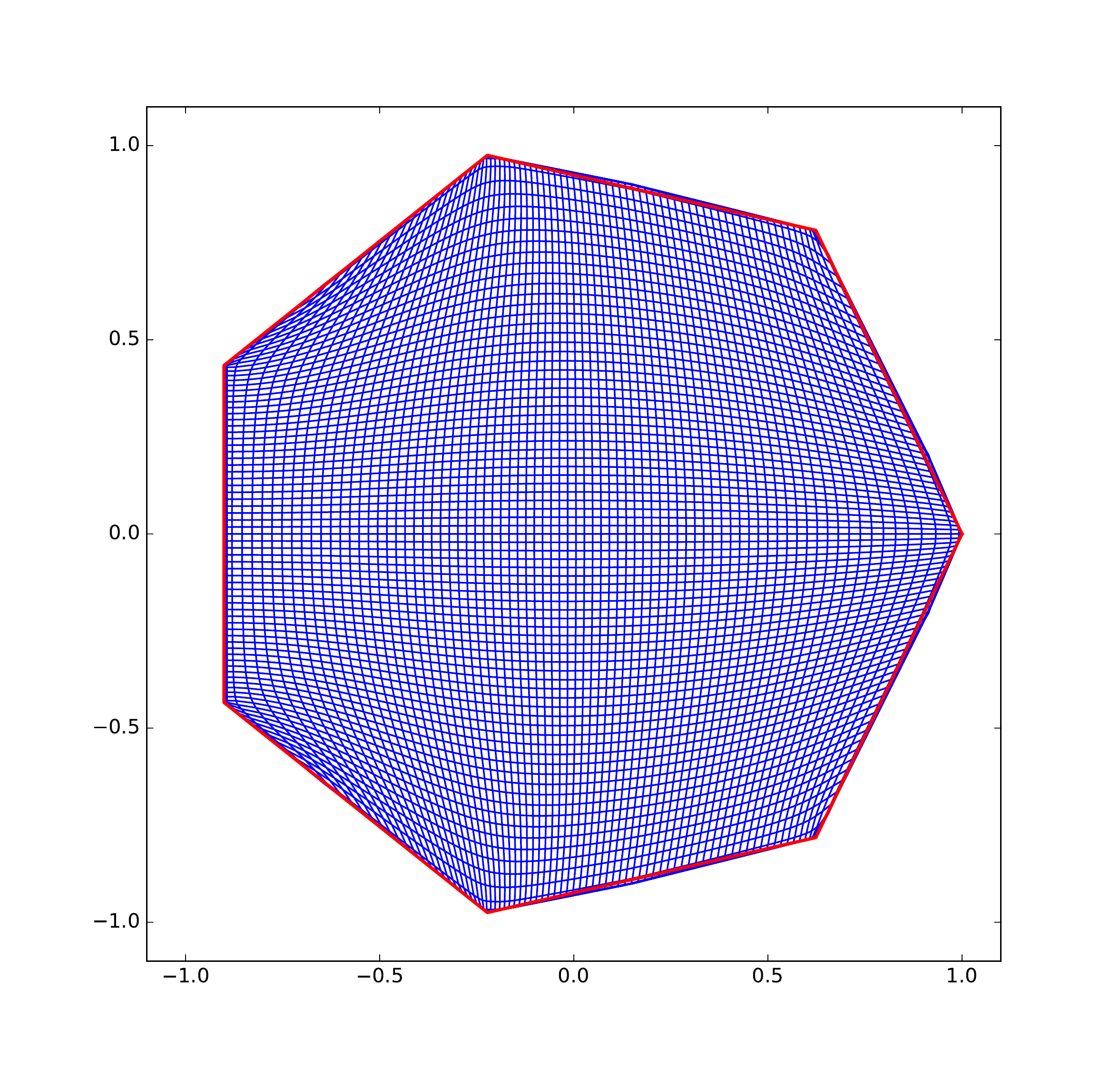} 
\caption{Mapping a square to  heptagon. Left: stencil width = 4. Right: stencil width = 8. }
\label{sq2hpt4} 
\end{figure}

\begin{figure}[hp]
\centering
\includegraphics[width=0.45\linewidth]{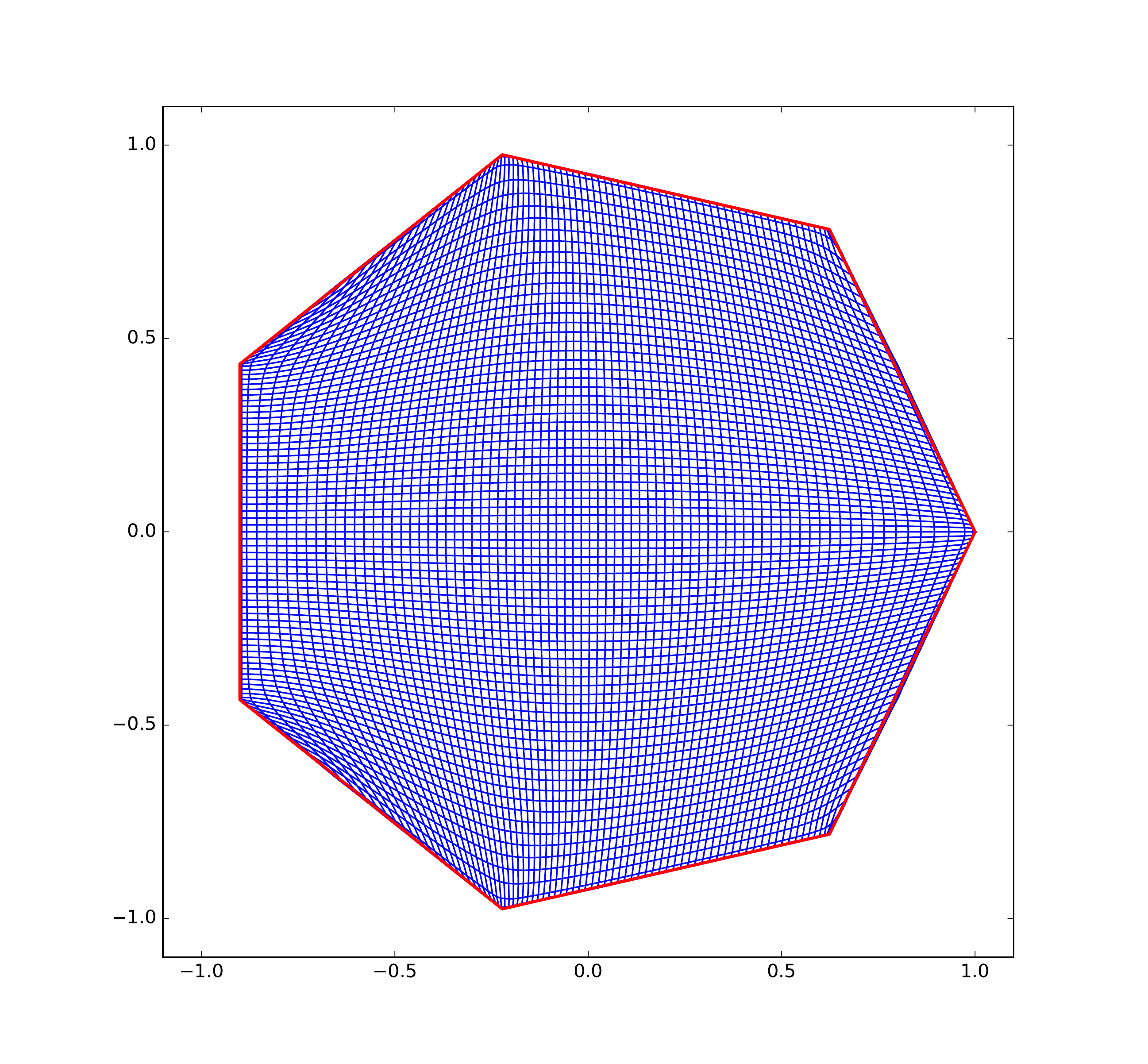} 
\includegraphics[width=0.45\linewidth]{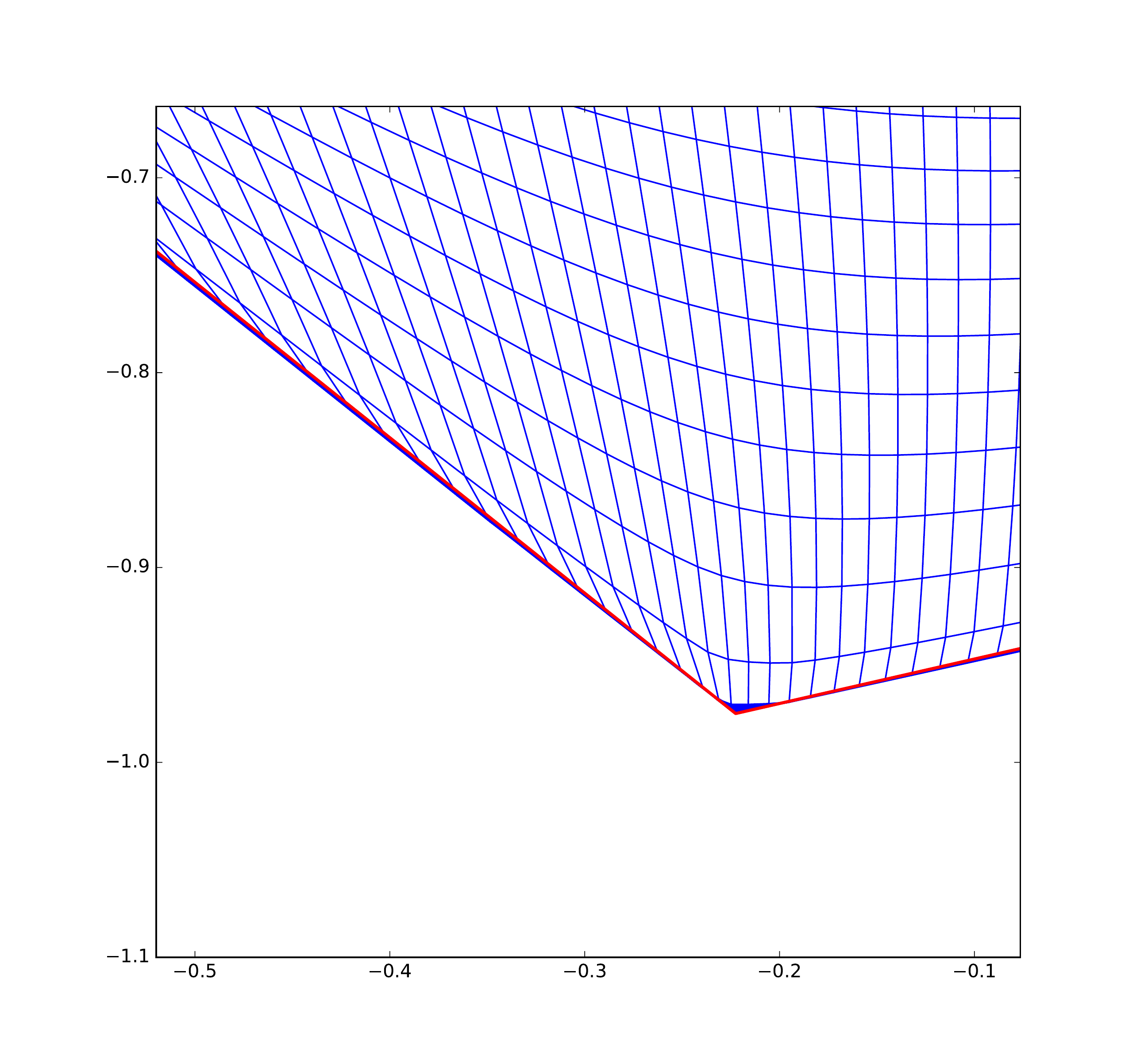} 
\caption{Left: Mapping a square to  heptagon, Stencil width = 16. Right: zoom. }
\label{sq2hpt16} 
\end{figure}

\clearpage

 \subsection{Experiments with inhomogeneous source density $f$ } 
 \label{sec:inhomo}

 \revision{ 
We start with a test case taken from \cite{bfo} and for which the anaytical solutions is given. \\
Set 
\[
q(z) = \left( - \frac{1}{8\pi} z^2 + \frac{1}{256 \pi^3} + \frac{1}{32 \pi}\right) \cos(8 \pi z) + \frac{1}{32 \pi^2} z \sin(8 \pi z) 
\]
The map from the density
  \[
    f(x_1,x_2) = 1+4 \, \left(q''(x_1) q(x_2) + q''(x_2) q(x_1)\right) + 16 \, \left( q(x_1) q(x_2) q''(x_2) q''(x_1)  -  q'(x_2)^2 q'(x_1)^2 \right)
  \]
  defined on the square $(-0.5, 0.5)^2$ onto a uniform density in the same
square  has the exact solution
  \[
  u_{x_1}(x_1,x_2) = x_1 +4 \, q^{'}(x_1) q(x_2)  \quad \quad  u_{x_2}(x_1,x_2) = x_2 +4 \, q^{'}(x_2) q(x_1) 
  \]

In table \ref{tblb} we give the error in relative $L^1$ nom on the gradient for different values of $N$, the space discretization and Stencil width. 
Convergence is set for a residual at least of $1e-10$.  Two main observations can me made:
First the results clearly indicate a first order accuracy on the gradient of the solution. Second and as already discussed in Section~\ref{sec:stencil} and 
\cite{malbr}, if the condition numbers of the Hessian of the solution are bounded, then the MA-LBR accuracy only depends on $h$. In this test case 
the directions contained in the stencil of length $2$ are sufficient to represent the  positive definite Hessians of the exact  solution. 

\begin{table}[h]
  \begin{center}
\begin{tabular}{ | c | c | c | c | }
\hline \diagbox[width=10em]{N}{S. Width}    &  2   &   4 &   6    \\
\hline                      64           &       1.6425e-2  &  1.6425e-2 &        1.6425e-2                       \\
\hline                      128           &      0.8045e-2  &   0.8045e-2 &       0.8045e-2                           \\
\hline                      256          &     0.3966e-2  &  0.3966e-2   &           0.3966e-2                      \\
\hline                      512         &      0.1968e-2  &   0.1968e-2&        0.1968e-2                      \\
\hline          
\end{tabular} 
\end{center}
\caption{  Error in relative $L^1$ norm on the gradient for different values of $N$, the space discretization and Stencil width}
\label{tblb} 
\end{table} 
}

 In the next example, the  heterogeneous densities $f$ mapped to a constant 
 density ball $\TG$, $N = 128$ and stencil width is $5$ .  
 
 Figures~\ref{inhomo1} and \ref{inhomo2} show heterogeneous sources and the corresponding deformation of the cartesian 
 grid.

 \begin{figure}[h]
\centering
\includegraphics[width=0.45\linewidth]{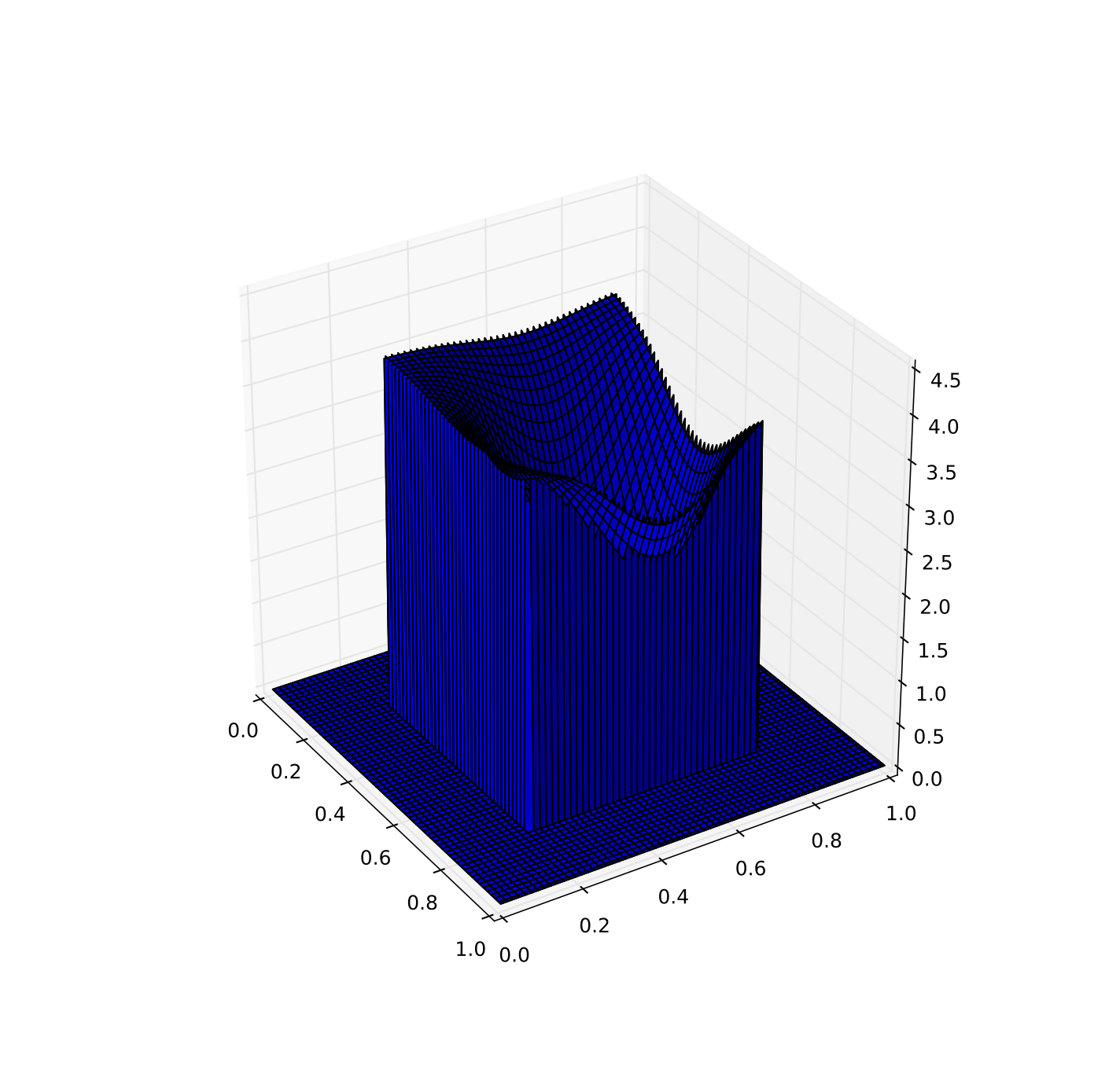} 
\includegraphics[width=0.45\linewidth]{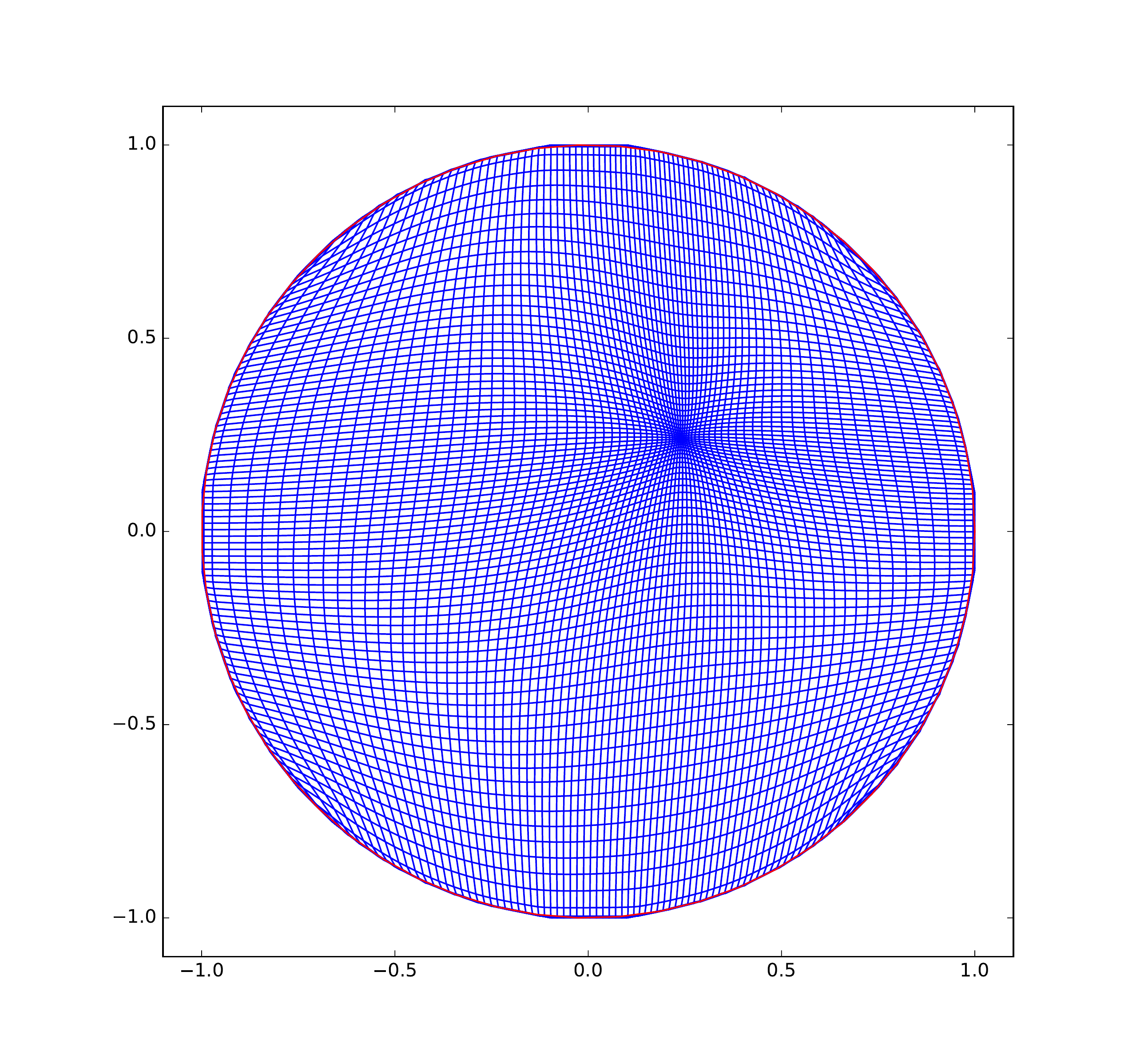}\\ 
\includegraphics[width=0.45\linewidth]{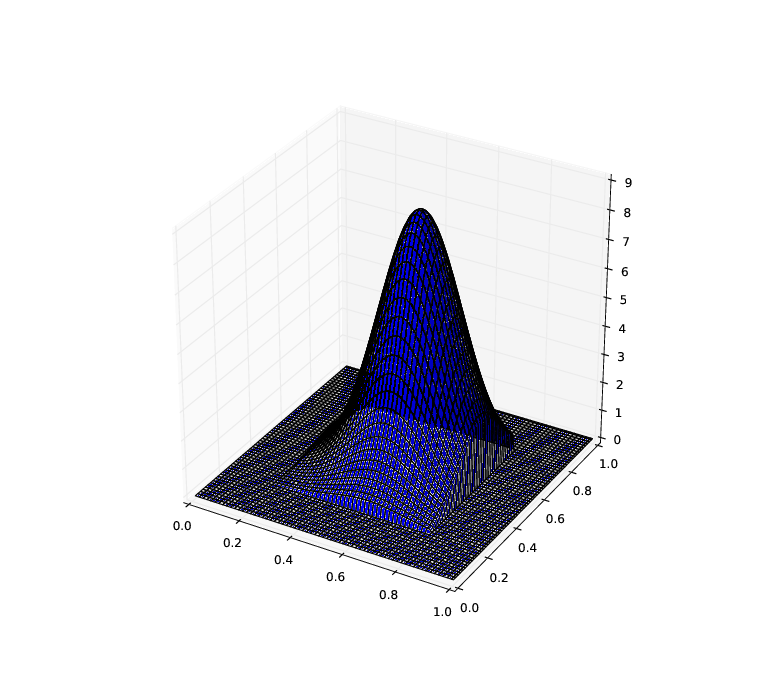}
\includegraphics[width=0.45\linewidth]{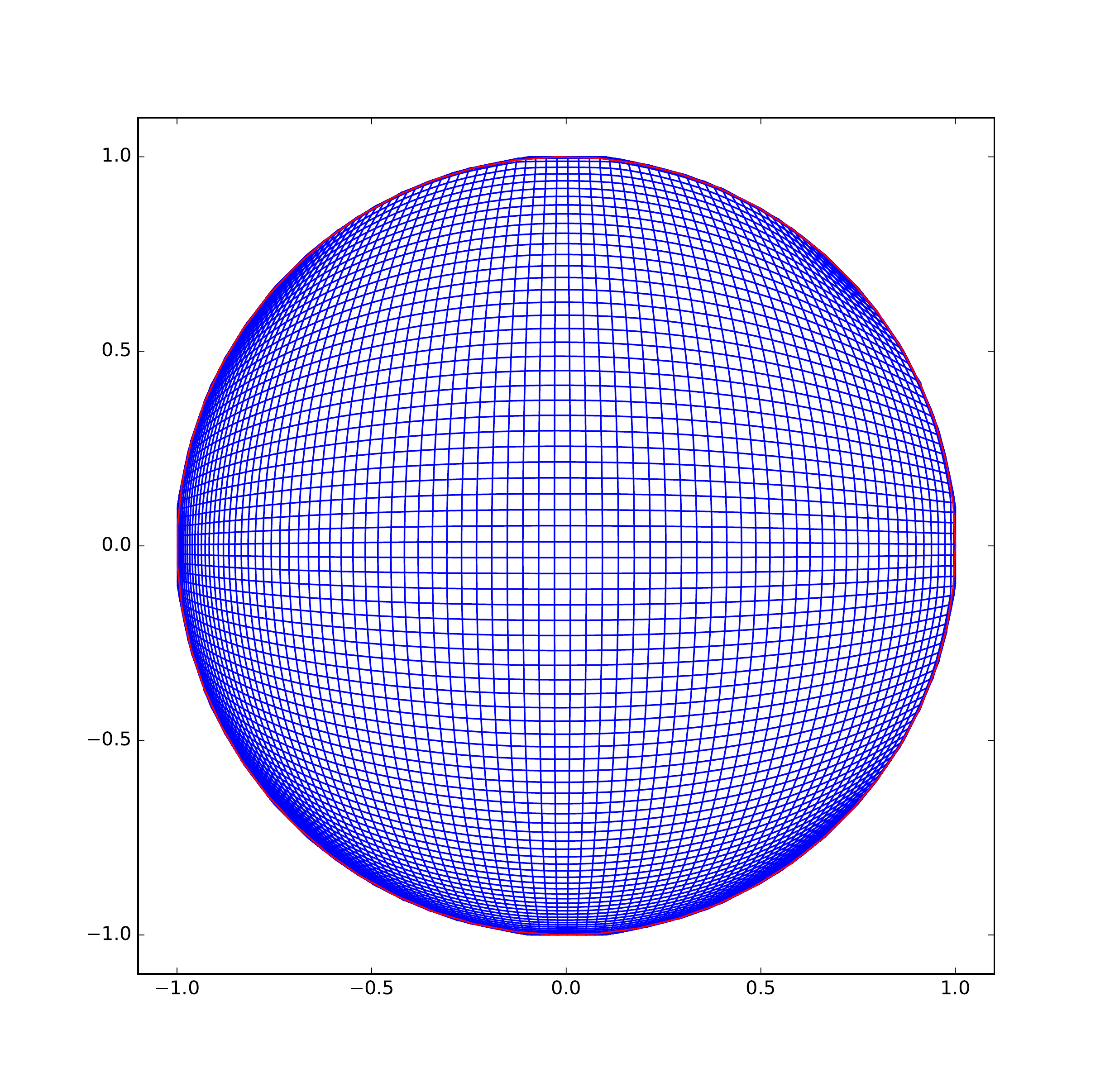} 
\caption{Left column: Source density $f$. Right column: Corresponding optimal map deformation of the grid.}
\label{inhomo1}
\end{figure}

In  Figure~\ref{inhomo2} the source density is random on the square. This test case demonstrates the applicability of the 
method to Lebesgue integrable densities.  

 \begin{figure}[h!]
\centering
\includegraphics[width=0.45\linewidth]{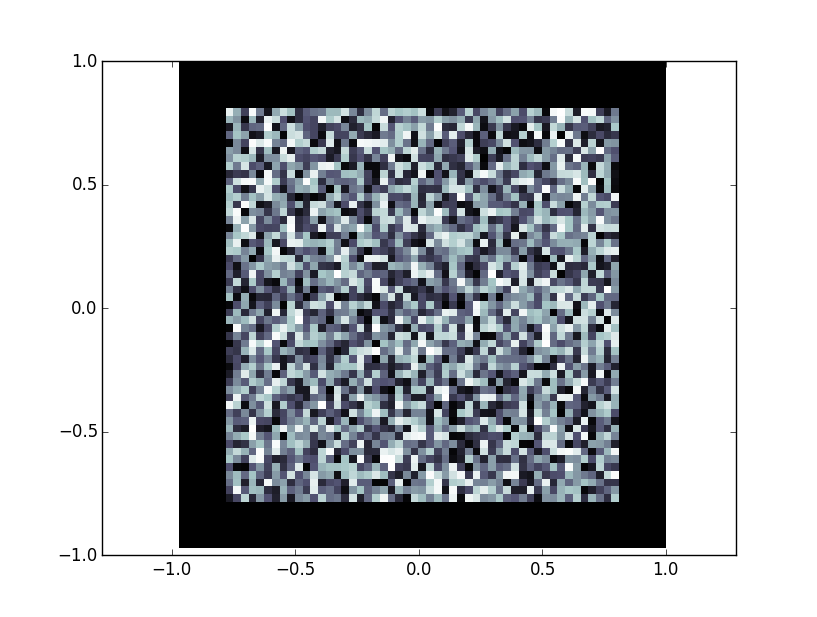}
\includegraphics[width=0.45\linewidth]{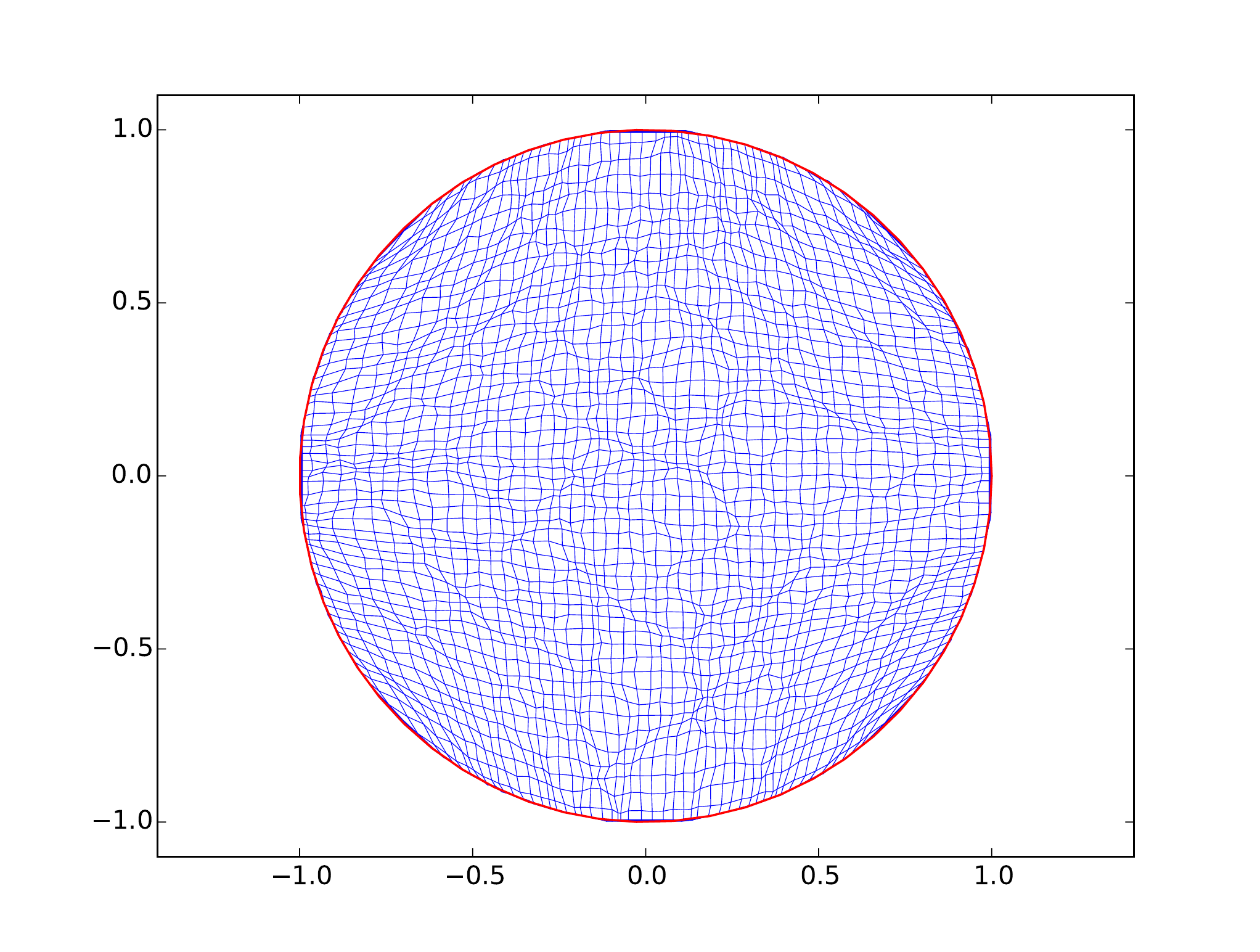}
\caption{Left: Source density $f$. Right: Corresponding optimal map deformation of the grid.}
\label{inhomo2}
\end{figure}

\subsection{ Experiments with inhomogeneous target densities $g$  }

 In this section we show the results obtained with  a constant 
 density square  $\SC$ (not represented in the figures) mapped to heterogeneous densities $g$ defined on a ball $\TG$, $N = 128$ and stencil width is $5$.  
 Figure~\ref{inhomotg1} shows the case of one Gaussian and a mixture 
 of Gaussians. The boundary of the target is the red circle.  The density $g$ and its gradient need to be defined in the numerical code 
 as functions which can be evaluated anywhere in the ambient space of $\TG$ as the map 
 can hit anywhere including  in $\R^2\setminus \TG$. If $g$ is given analytically (our case) this is easy 
 we just extend it by a constant out of $\TG$, otherwise one has to resort to interpolation.

 \begin{figure}[h!]
\centering
\includegraphics[width=0.45\linewidth]{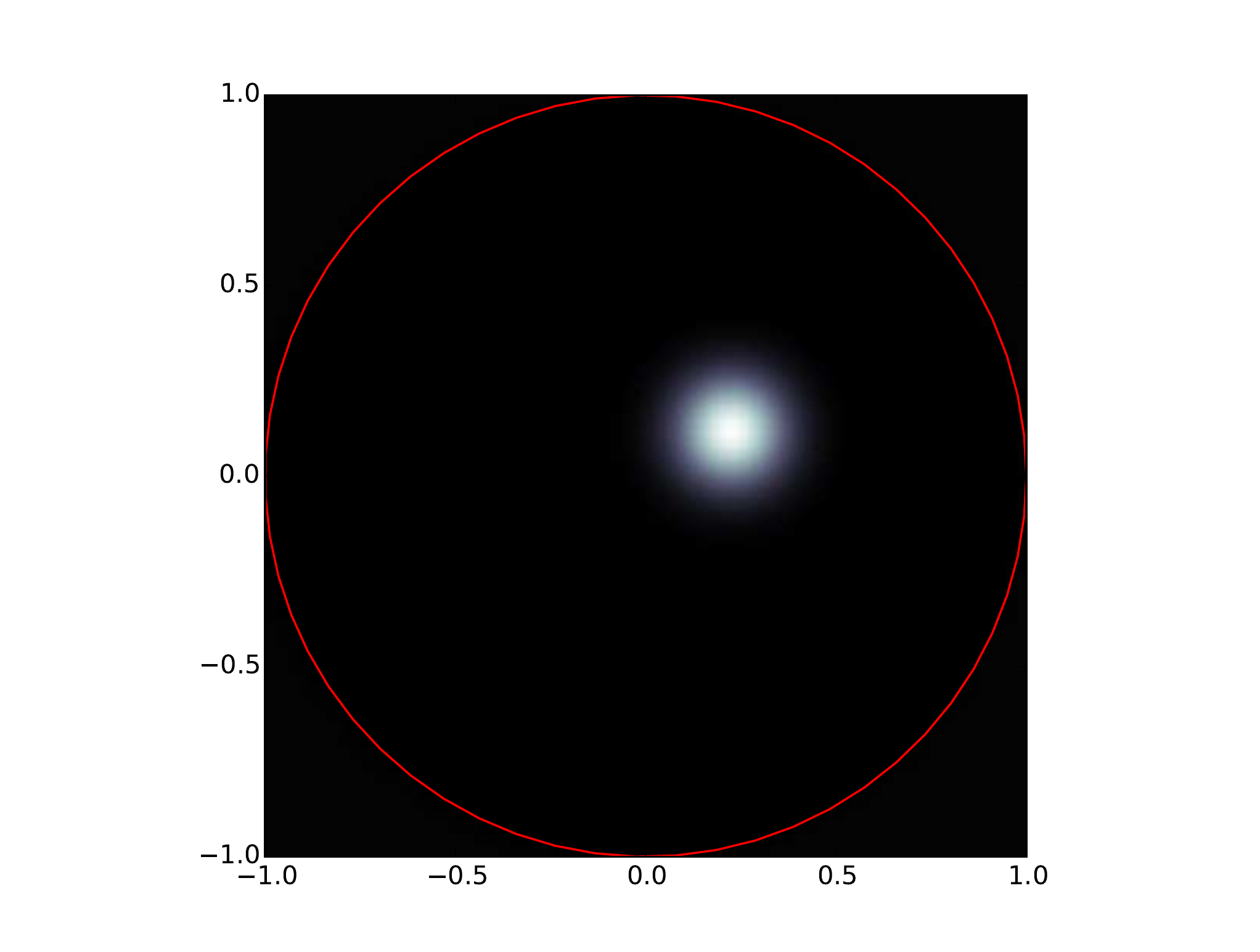}
\includegraphics[width=0.45\linewidth]{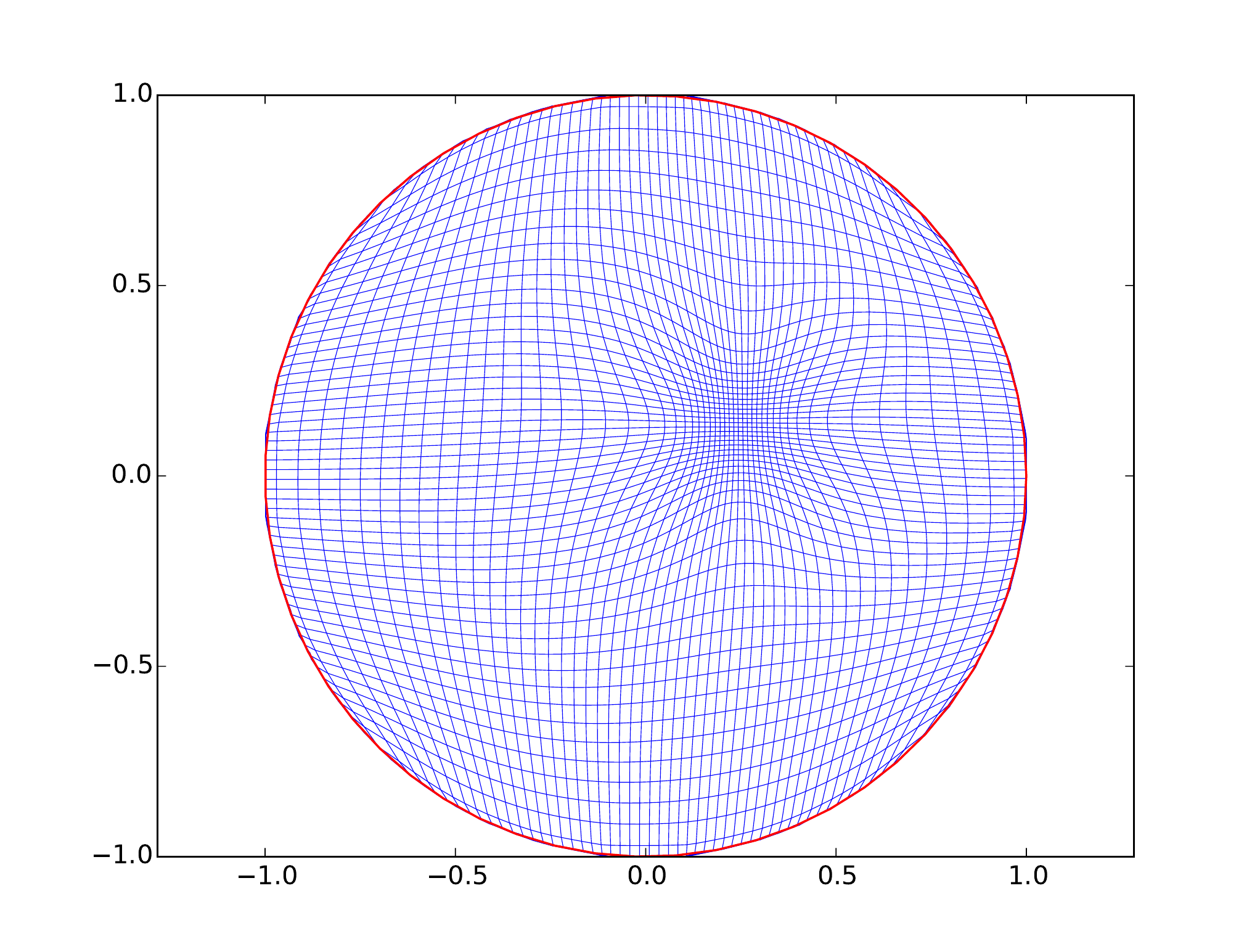}
\includegraphics[width=0.45\linewidth]{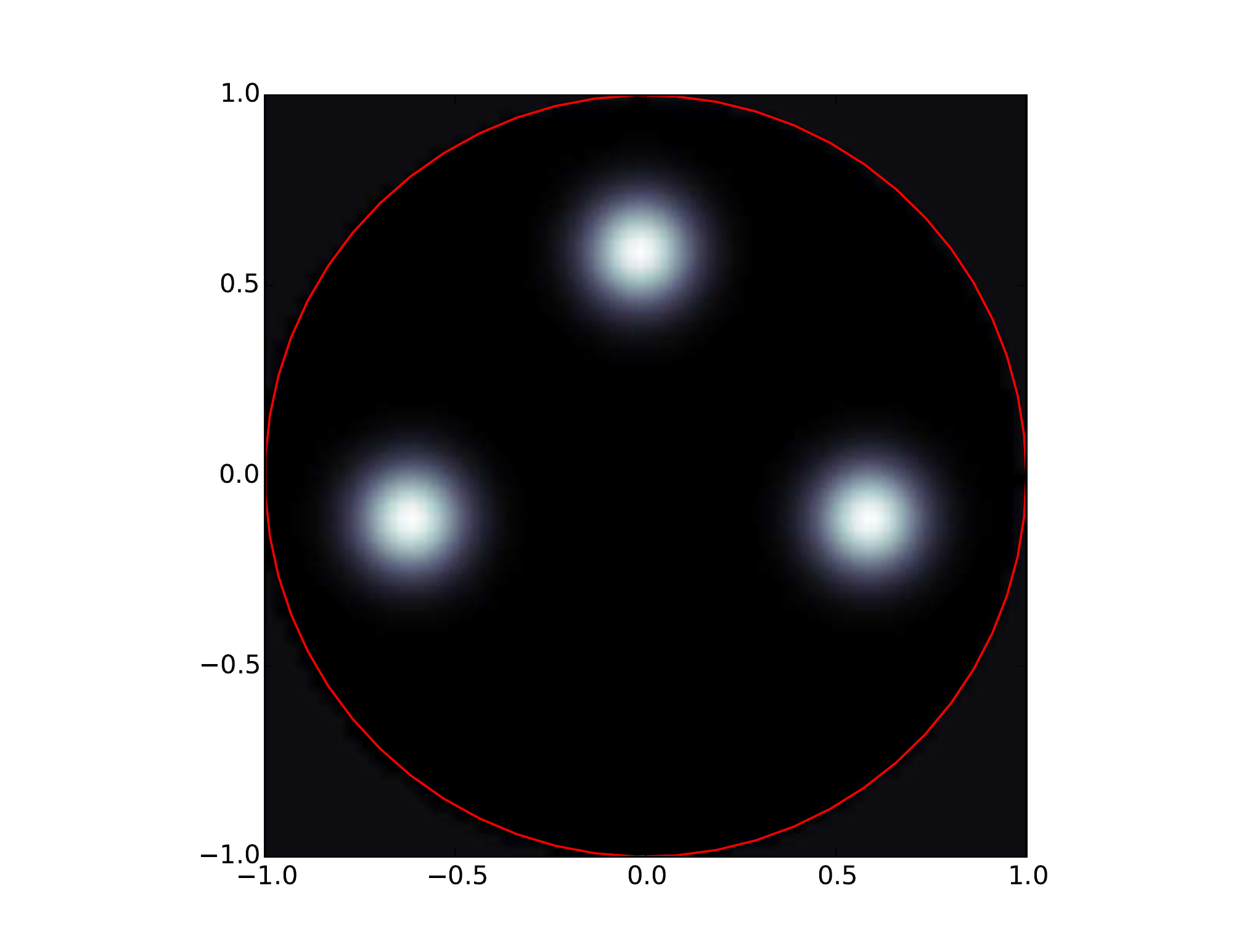}
\includegraphics[width=0.45\linewidth]{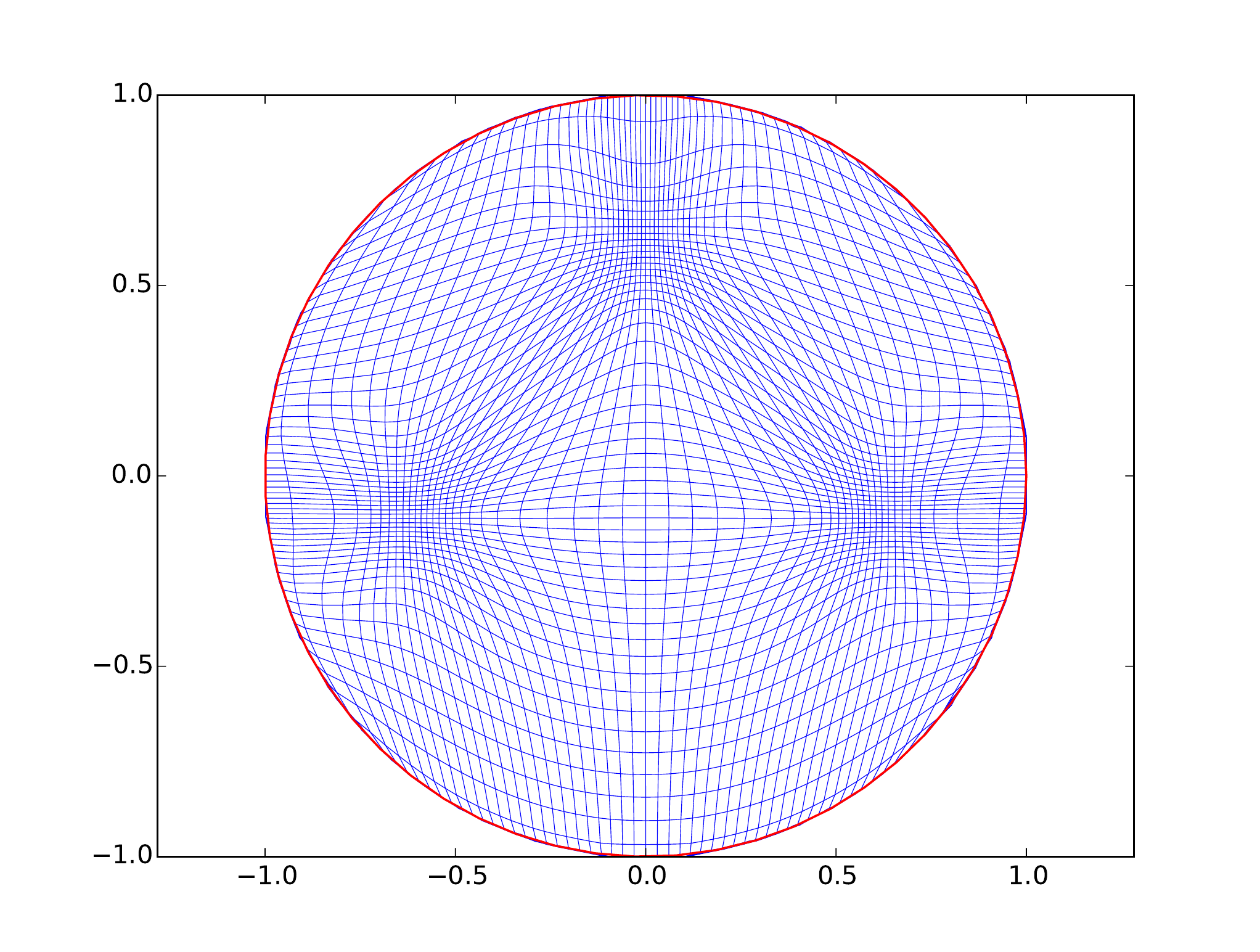}
\caption{Left: Target density $g$.  The boundary of the target is the red circle. Right: Corresponding optimal map deformation of the grid.}
\label{inhomotg1}
\end{figure}


\subsection{ Experiments with non convex and non connected sources} 
\label{sec:nonconvex}
In this section we map constant source densities $f$ with non convex and 
non connected supports to a constant density ball $\TG$.  The solution is 
still $C^1$ but is is know that the inverse mapping, i.e. the Legendre-Fenchel 
transform of the potential, has gradient singularities.  This has been analyzed 
by Figalli  in~\cite{figalli}.   Figures \ref{hole1} to \ref{hole5zoom} show different 
densities an the associated map deformations. The zero densities inclusions 
created singular structures in the target which correspond to gradient 
singularities of the dual map. These structures are consistent with those 
predicted by Figalli.  
We use $N = 128$ and the stencil width is $5$ . 

\begin{figure}[h]
\centering
\includegraphics[width=0.45\linewidth]{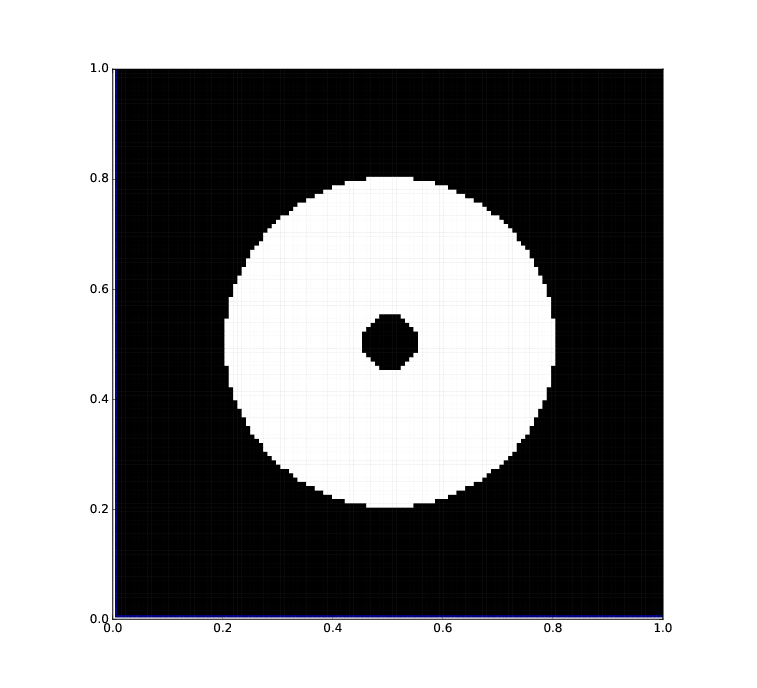} 
\includegraphics[width=0.45\linewidth]{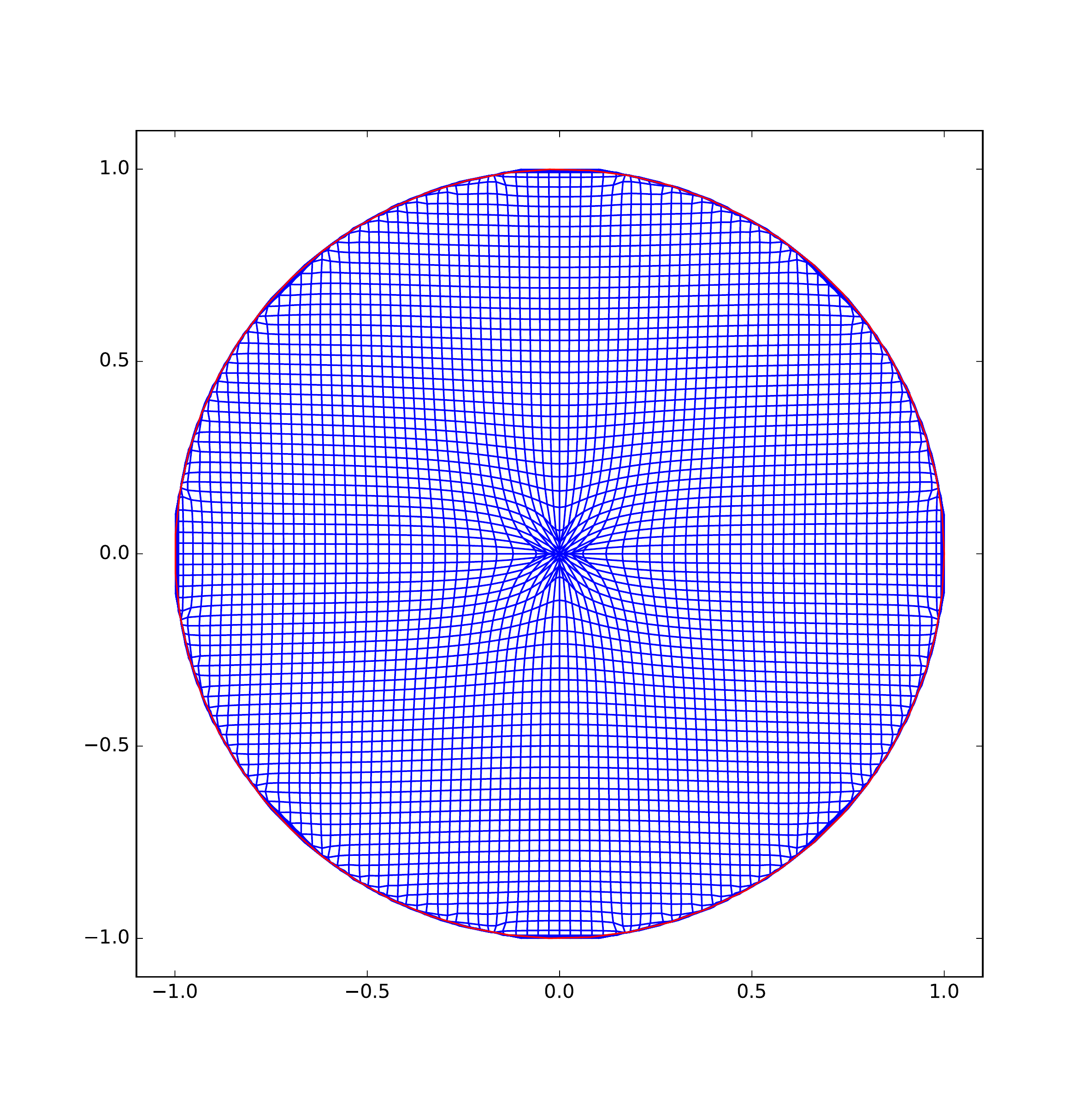}\\ 
\includegraphics[width=0.45\linewidth]{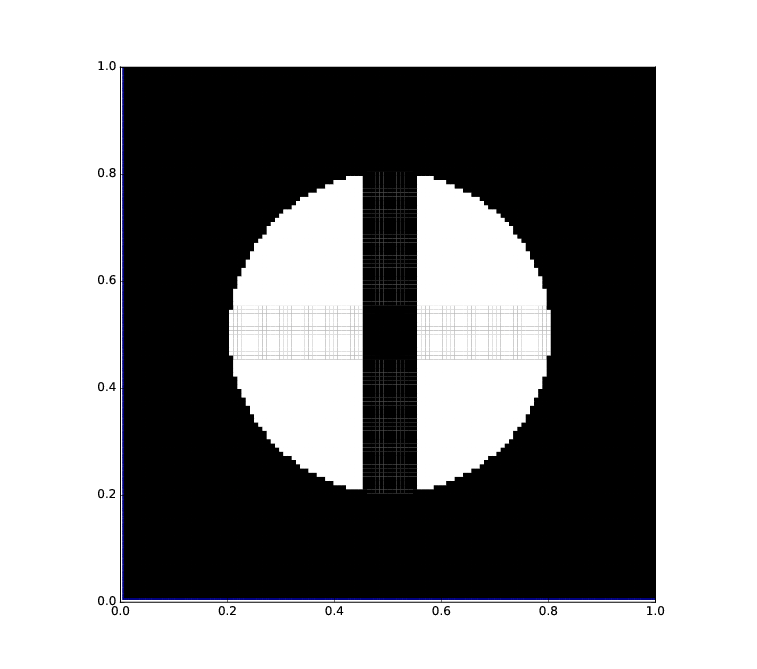} 
\includegraphics[width=0.45\linewidth]{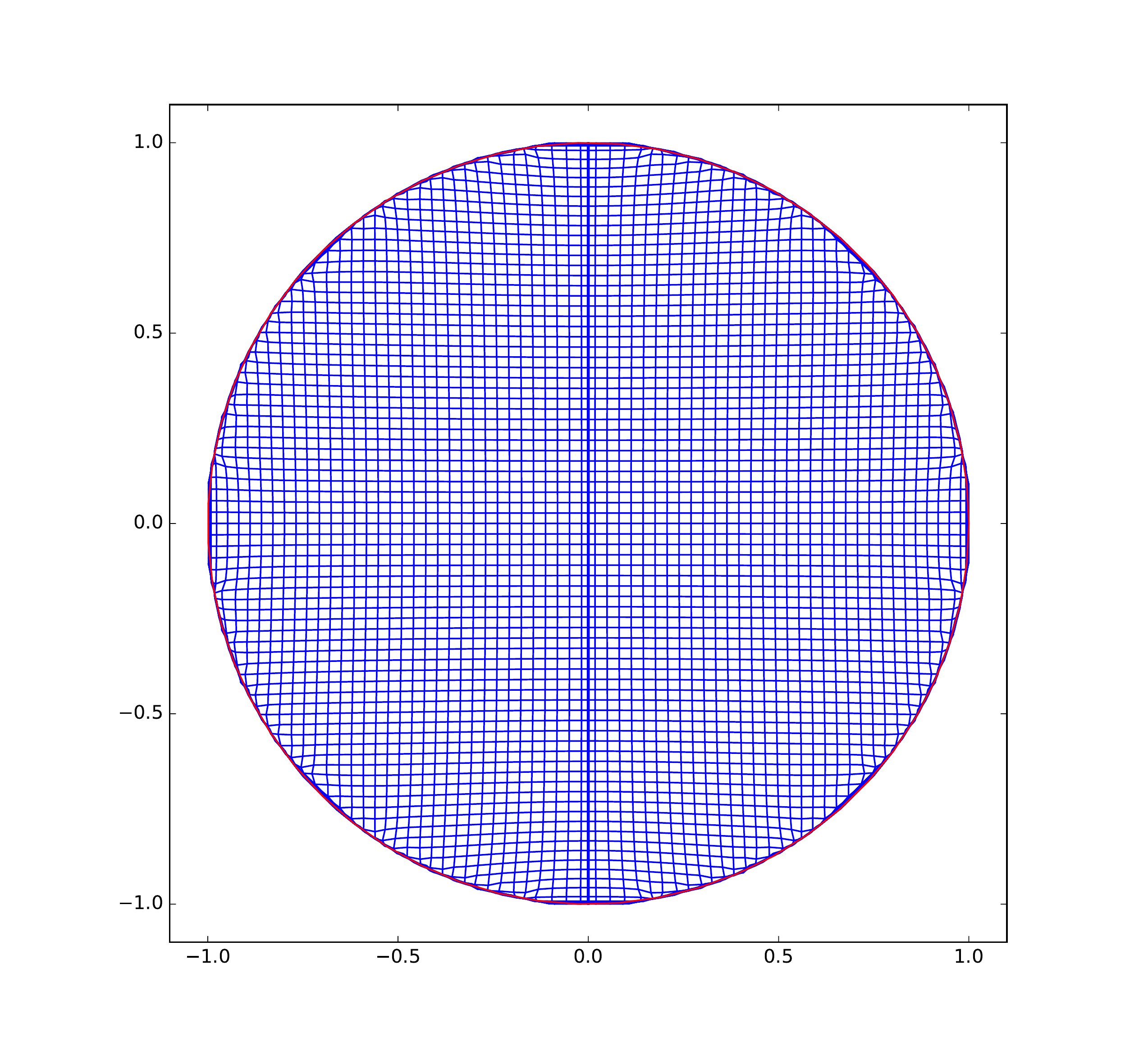} 
\caption{Left column: Source density $f$. Right column: Corresponding optimal map deformation of the grid.}
\label{hole1} 
\end{figure}

\begin{figure}[h]
\centering
\includegraphics[width=0.45\linewidth]{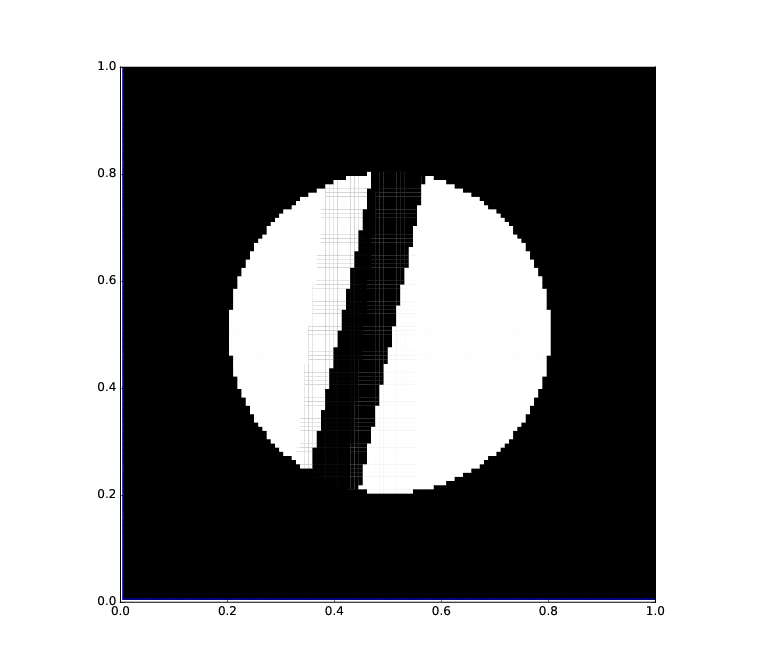} 
\includegraphics[width=0.45\linewidth]{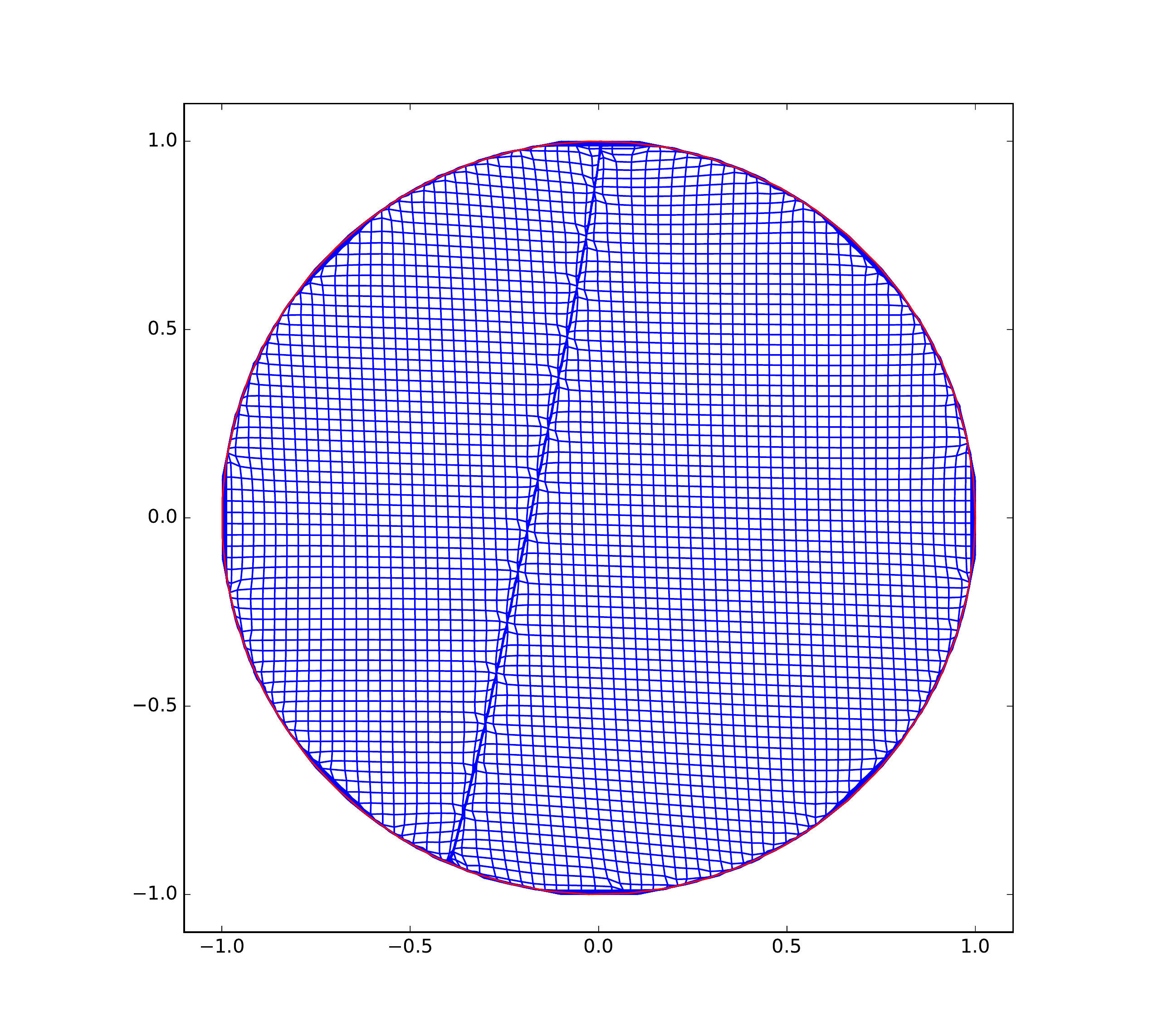}\\ 
\includegraphics[width=0.45\linewidth]{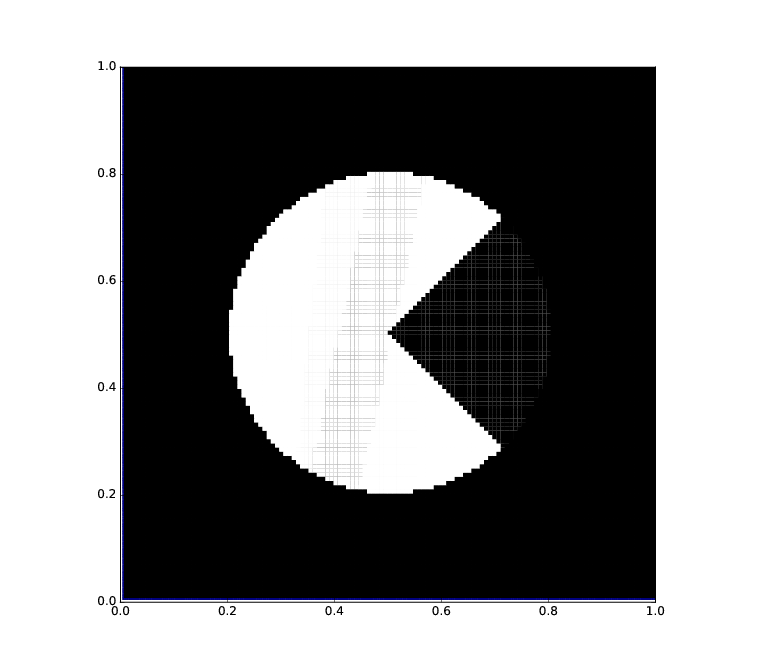} 
\includegraphics[width=0.45\linewidth]{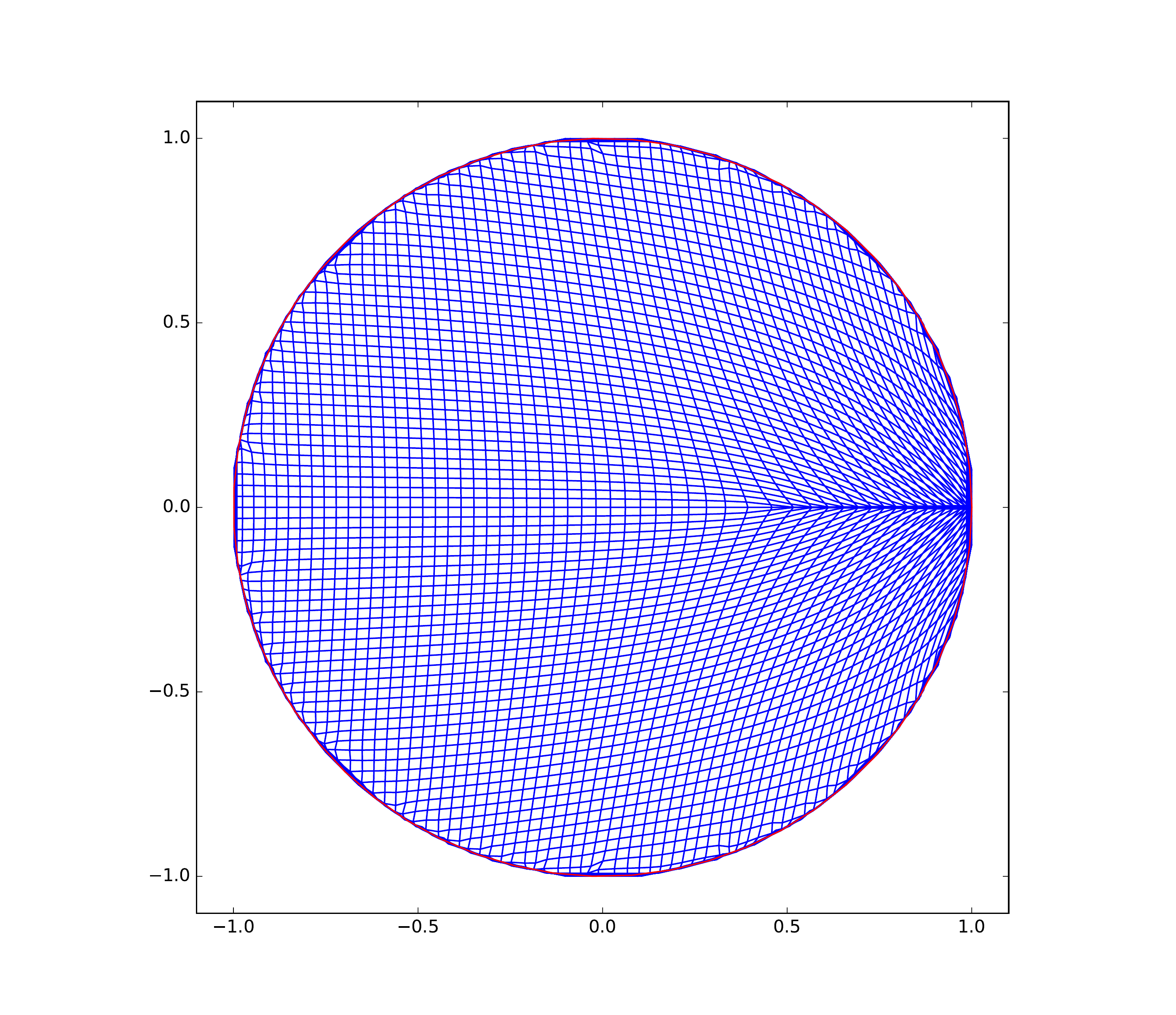} 
\caption{Left column: Source density $f$. Right column: Corresponding optimal map deformation of the grid.}
\label{hole3} 
\end{figure}

\begin{figure}[h]
\centering
\includegraphics[width=0.45\linewidth]{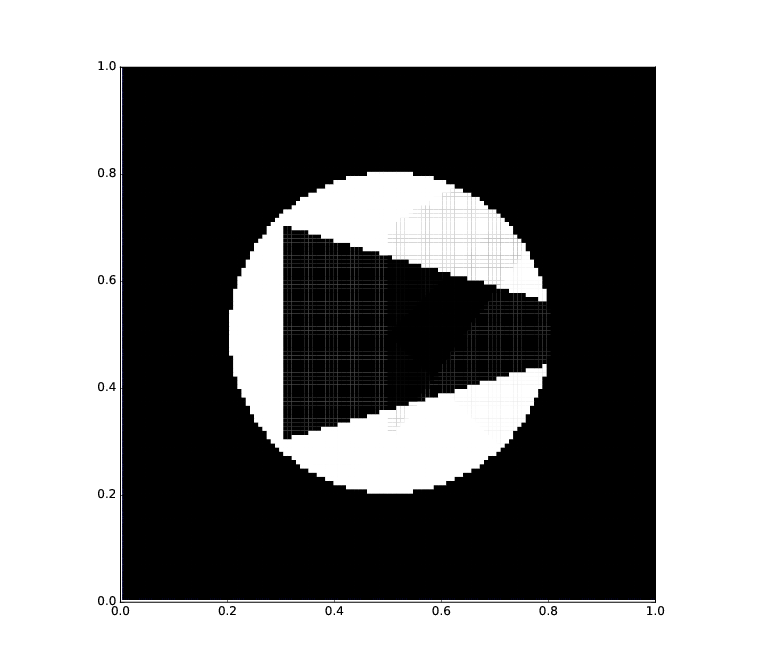} 
\includegraphics[width=0.45\linewidth]{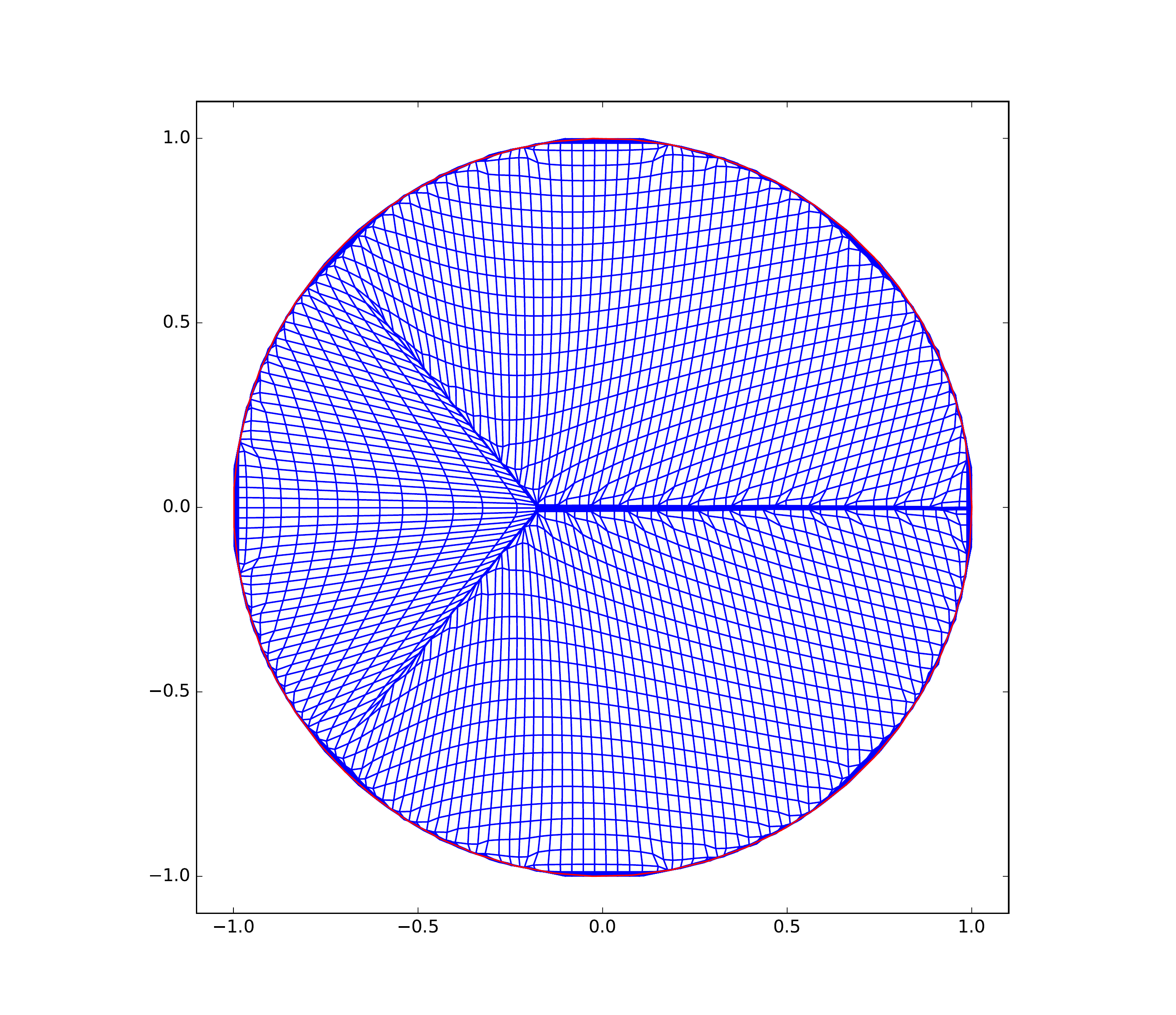} 
\caption{Left: Source density $f$. Right: Corresponding optimal map deformation of the grid.}
\label{hole5} 
\end{figure}

\begin{figure}[h]
\centering
\includegraphics[width=0.45\linewidth]{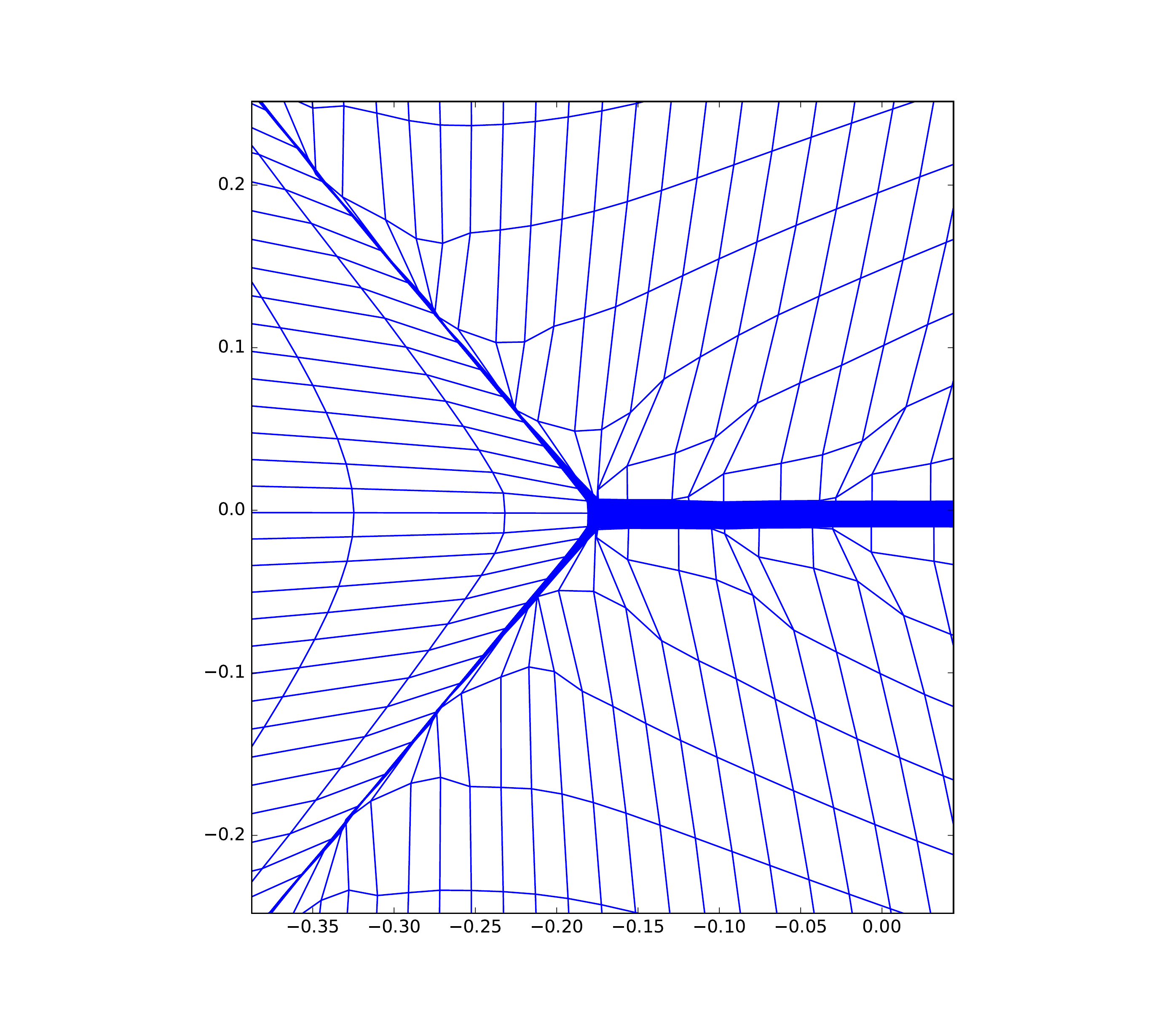} 
\caption{Zoom of Figure~\ref{hole5}, Right.}
\label{hole5zoom} 
\end{figure}

\revision{Eventually, in Figure~\ref{mapS}, we display an experiment which shows the $\ell^\infty$-norm of $e$ for the optimal superbase chosen by $\MA$ or the optimal vector chosen by $\tMAo$. This illustrates the fact that the MA-LBR operator uses very compact stencils (in that case the superbase $\{(1,1),(0,1),(1,0)\}$ or its rotation is used everywhere in $\SC$), whereas the extension outside $\SC$ uses more elongated vectors depending on where the points are mapped in $\oTG$. 
  
\begin{figure}[h]
\centering
\includegraphics[width=0.45\linewidth]{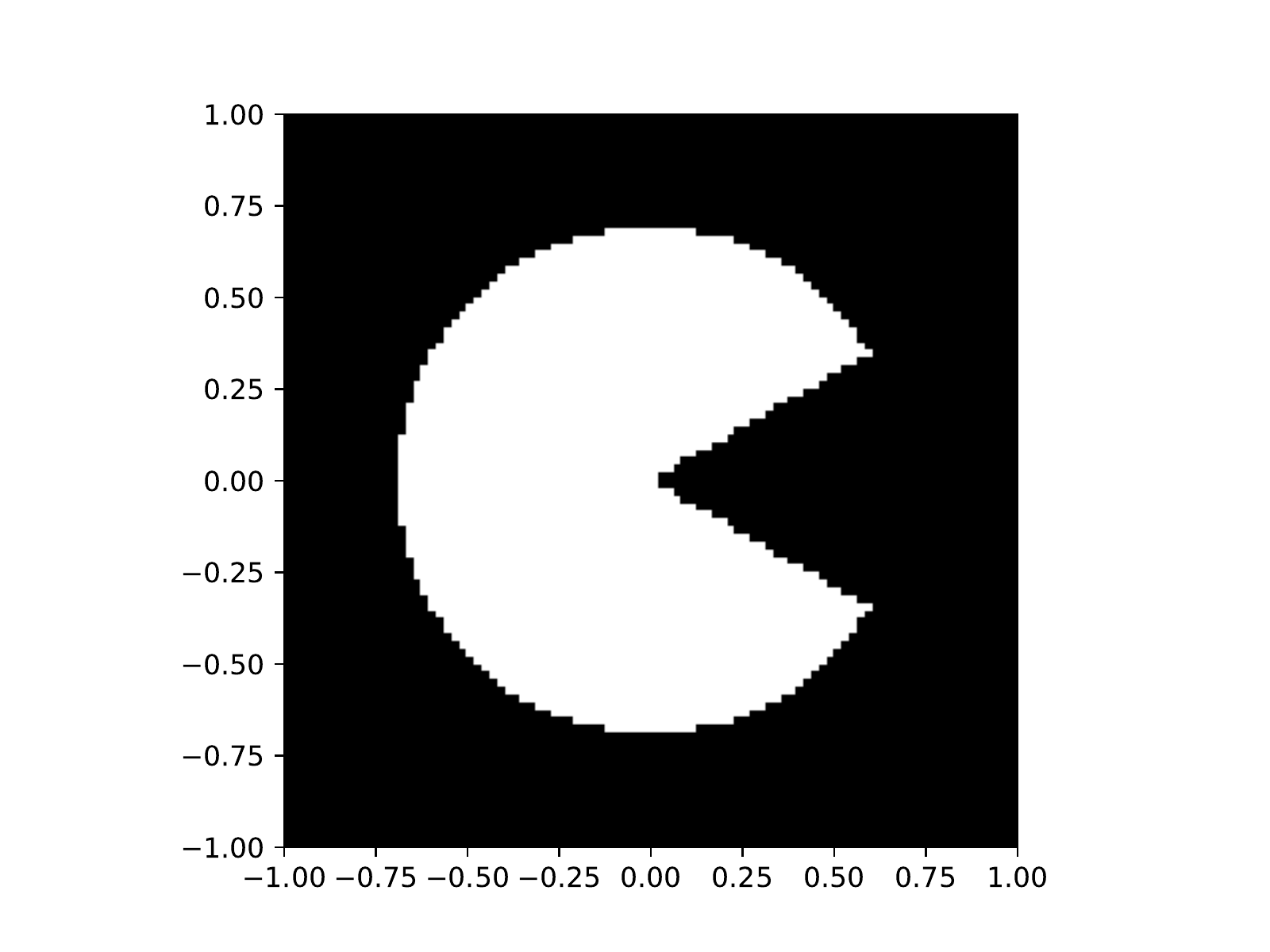} 
\includegraphics[width=0.45\linewidth]{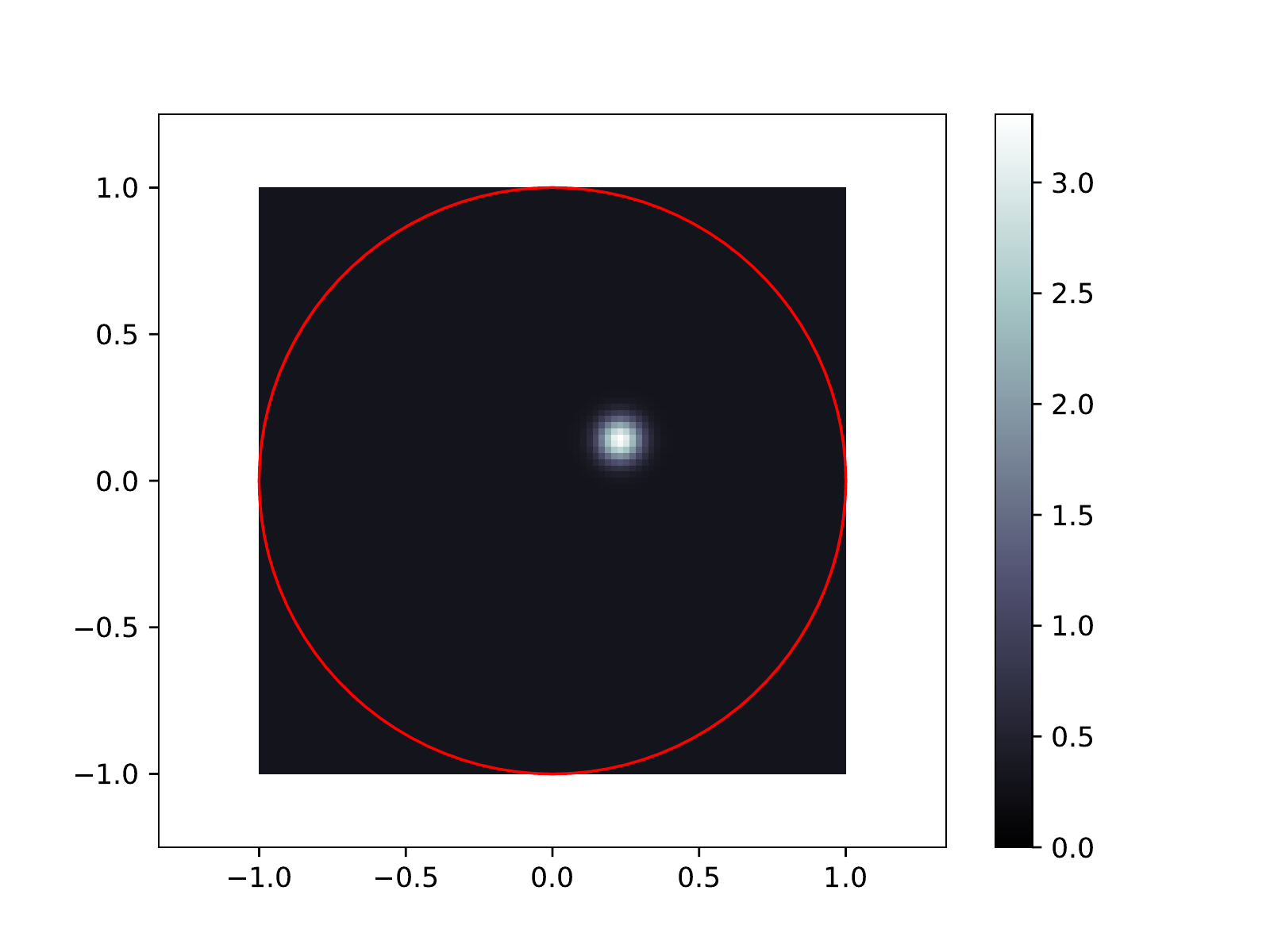} 
\includegraphics[width=0.47\linewidth,clip,trim=0.6cm 0 0 0]{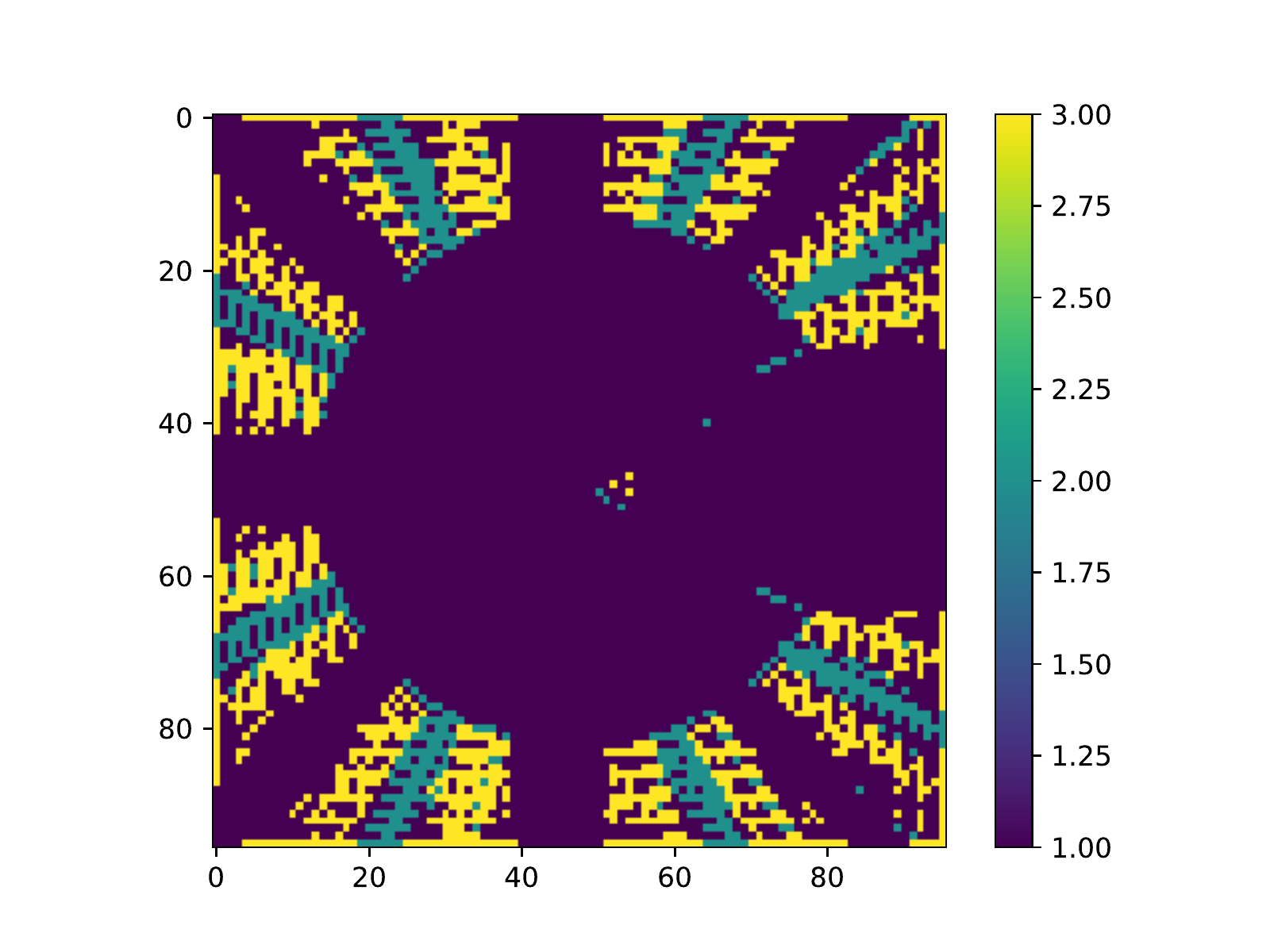} 
\includegraphics[width=0.36\linewidth]{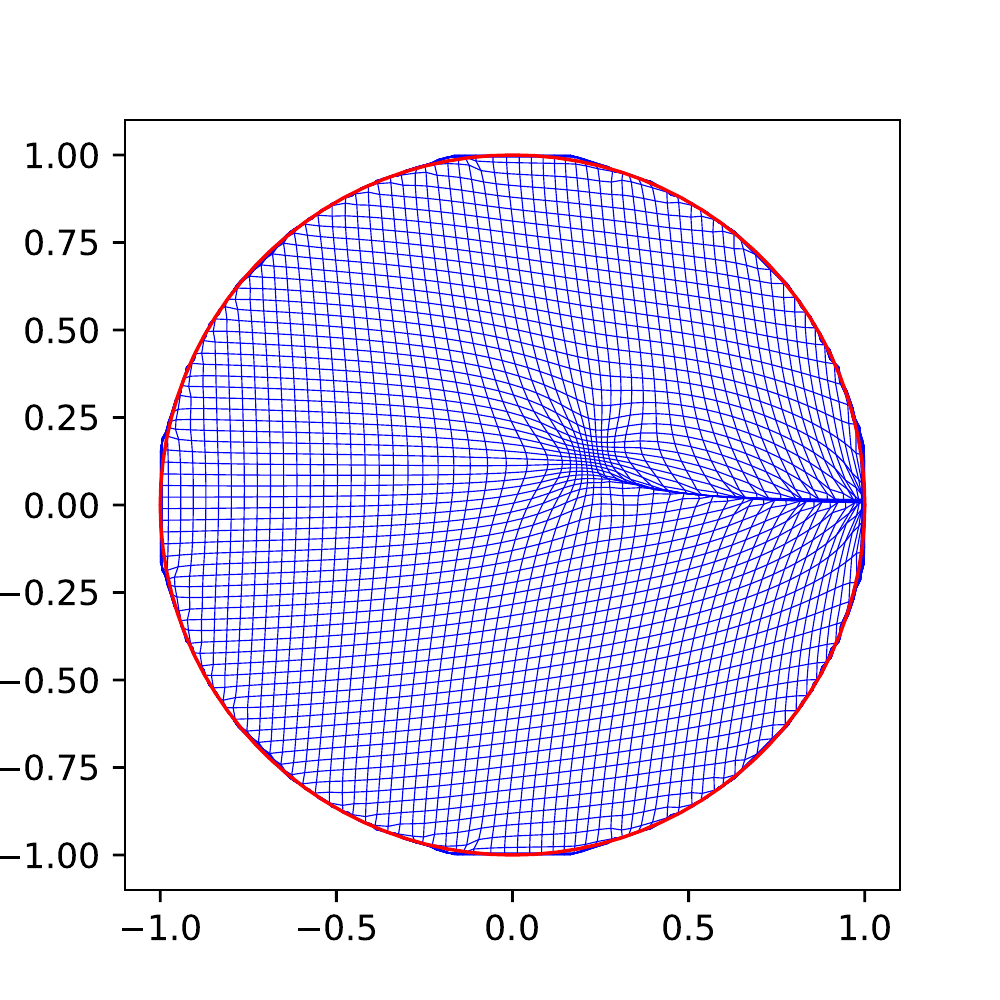} 
\caption{From left to right, top to bottom: Source density, target density and set $\oTG$, $\|e\|_{\infty}$ for the $e$ vector of the optimal superbase, optimal map deformation.}
\label{mapS} 
\end{figure}

}


\section{Conclusion} 
In this work, we have proposed an novel way to impose the BV2 boundary condition for the Monge-Amp\`ere equation in schemes such as MA-LBR.
 The idea consists in slightly extending the domain of the solution so as to capture the behavior of ``minimal'' Brenier solutions to the optimal transport problem. Our proof of convergence as the grid stepsize goes to zero does not appeal to the theory of viscosity solutions, but rather to simple arguments combined with standard optimal transport results.
The numerical experiments faithfully reproduce the typical behavior of optimal transport solutions. 

 Although numerically we have not encountered any particular difficulty with the resolution of the scheme, the existence of a solution to the discrete problem remains an open problem that we leave for future work.
 \nopagebreak
 \section*{Acknowledgments}
 The authors warmly thank Jean-Marie Mirebeau for suggesting the proof of Lemma~\ref{lem:subgradh2}.

\appendix
\section{Convergence of the finite difference schemes}\label{sec:cvfindiff}
The aim of this section is to prove that~\eqref{eq:cvgrad} holds for the centered, forward and backward finite difference schemes, respectively $\gradnc$ (see~\eqref{d1hC}), $\gradnf$ and $\gradnb$ (see~\eqref{d1hFB}).

We have defined in Section~\ref{sec:discretize}, the values $\left(\udisc[x]\right)_{x\in \domD\cap \gridn}$, and in Section~\ref{sec:convergence} their ``interpolation'' $\utc$ using~\eqref{eq:defutc} as a convex function. In Lemma~\ref{lem:vC1}, we have shown that this interpolation converges (along some sequence) towards some function $v\in\Cder^{1}(\RR^2)$. We wish to prove that the discrete gradients $\gradnc \udisc$, $\gradnf\udisc$ and $\gradnb\udisc$ converge in some sense to the gradient $\nabla v$.

\subsection{Convergence of the gradient of convex functions}
We begin with some general results on convex functions. The following lemma is standard, and mainly follows from the results in~\cite[Section VI.6.2]{hiriart-lemarechal-1996}.
However, we provide a proof below, as this result is not explicitly stated there.

\begin{lemma}{\label{lem:cvugradcvx}}
Let $\Omega\subset \RR^2$ be a nonempty open convex set, $C>0$, and $\{v_n\}_{n\in\NN}$ be a sequence of finite-valued convex functions on $\Omega$ which converges pointwise on $\Omega$ towards some function $v:\Omega\rightarrow \RR$ as $n\to+\infty$.

  If $v\in \Cder^{1}(\Omega)$, then for all compact set $K\subset \Omega$, 
  \begin{align}
    \lim_{n\to+\infty}\sup_{x\in K}\sup_{\abs{x'-x}\leq C\stsizen}\sup_{y\in \partial v_n(x')}\abs{\nabla v(x)-y}.
  \end{align}

\end{lemma}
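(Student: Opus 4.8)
The plan is to argue by contradiction, relying on two classical facts about convex functions: (a) pointwise convergence of a sequence of finite-valued convex functions on an open convex set upgrades automatically to uniform convergence on every compact subset (see~\cite[Section~VI.6.2]{hiriart-lemarechal-1996} or~\cite[Theorem~10.8]{rockafellar1986convex}); and (b) the subgradients of a locally uniformly bounded family of convex functions are locally uniformly bounded on the interior of their common domain. Granting these, the statement (which should read ``$\dots=0$'') reduces to a closed-graph argument for subdifferentials combined with a compactness extraction.

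First I would fix the compact set $K\subset\Omega$ and, since $\Omega$ is open, choose $r>0$ so that $K_{2r}\eqdef\enscond{x\in\RR^2}{d(x,K)\leq 2r}$ is a compact subset of $\Omega$. By fact~(a), $v_n\to v$ uniformly on $K_{2r}$, so there is $M>0$ with $\sup_n\sup_{K_{2r}}\abs{v_n}\leq M$. Next I would establish the uniform subgradient bound: for $n$ large enough that $C\stsizen\leq r$, any $x\in K$, any $x'$ with $\abs{x'-x}\leq C\stsizen$ (so that $x'\in K_r$), and any $y\in\partial v_n(x')$, the subgradient inequality evaluated at $x'+re$ (which lies in $K_{2r}\subset\Omega$) gives $\dotp{y}{e}\leq \tfrac1r\big(v_n(x'+re)-v_n(x')\big)\leq \tfrac{2M}{r}$ for every unit vector $e$; choosing $e=y/\abs{y}$ yields $\abs{y}\leq 2M/r\eqdef L$, a bound independent of $n$, $x$, $x'$ and $y$.

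Now suppose the conclusion fails. Then there are $\varepsilon>0$ and, along a subsequence, points $x_n\in K$, $x_n'$ with $\abs{x_n'-x_n}\leq C\stsizen$, and $y_n\in\partial v_n(x_n')$ with $\abs{\nabla v(x_n)-y_n}\geq \varepsilon$. By compactness of $K$ extract a further subsequence with $x_n\to x_*\in K$; since $\stsizen\to 0^+$, also $x_n'\to x_*$. By the bound above $(y_n)$ is bounded, so after one more extraction $y_n\to y_*$. For every $z\in\Omega$ and $n$ large, $v_n(z)\geq v_n(x_n')+\dotp{y_n}{z-x_n'}$; passing to the limit using the pointwise convergence $v_n\to v$ (and continuity of $v$, $x_n'\to x_*$, $y_n\to y_*$) gives $v(z)\geq v(x_*)+\dotp{y_*}{z-x_*}$ for all $z\in\Omega$, i.e.\ $y_*\in\partial v(x_*)$. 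Since $v\in\Cder^{1}(\Omega)$, $\partial v(x_*)=\{\nabla v(x_*)\}$, so $y_*=\nabla v(x_*)$; and $\nabla v$ being continuous, $\nabla v(x_n)\to\nabla v(x_*)=y_*$. Hence $\abs{\nabla v(x_n)-y_n}\to 0$, contradicting $\abs{\nabla v(x_n)-y_n}\geq\varepsilon$.

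The only non-elementary ingredient is fact~(a), the upgrade from pointwise to locally uniform convergence of convex functions, which is why I cite~\cite{hiriart-lemarechal-1996, rockafellar1986convex}; the remaining steps are a routine closed-graph/compactness argument. The one point requiring mild care is keeping all the difference-quotient evaluation points inside $\Omega$, which is the reason for interposing the two layers $K\subset K_r\subset K_{2r}\subset\Omega$ and only working for $n$ large enough that $C\stsizen\leq r$.
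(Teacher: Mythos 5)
Your proof is correct and follows essentially the same route as the paper's: uniform convergence on compacts, a uniform subgradient bound obtained from the subgradient inequality evaluated at a point displaced toward the subgradient direction, and a contradiction/compactness argument passing to the limit in the subgradient inequality and invoking the $\Cder^{1}$ regularity of $v$. Your version is slightly more explicit about the nested compact sets and the use of continuity of $\nabla v$ at the end, but the argument is the same.
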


\begin{proof}
  Let us recall that the convexity of $v_n$ implies that $v$ is convex, and $v_n$ converges uniformly on the compact sets of $\Omega$ towards $v$. Let $\rho>0$ such that $\rho\leq \frac{1}{4}\dist(K,\RR^2\setminus \Omega$), and let $K'\eqdef\enscond{x'\in \RR^2}{\dist(x',K)\leq \rho}\subset \Omega$.

  Observe that $\partial v_n(K')$ is bounded independently of $n$. Indeed, for any $x'\in K'$, any $y_n\in \partial v_n(x')$, if $y\neq 0$, 
  \begin{align}
    v_n\left(x'+\rho\frac{y_n}{\abs{y_n}}\right) &\geq v_n(x')+\rho\abs{y_n},\\
    \mbox{hence } \abs{y_n}&\leq \frac{1}{\rho}\left(v_n\left(x'+\rho\frac{y_n}{\abs{y_n}}\right)-v_n(x')\right),
  \end{align}
  and the right-hand side is uniformly bounded by uniform convergence of $v_n$ on $\enscond{x''\in\Omega}{\dist(x,K)\leq 2\rho}$.

  Now, assume by contradiction that there is some $\varepsilon>0$,  some (not relabeled) subsequence $x_{n}\in K$, $x'_{n}\in B(x_{n},C\stsize_n)$, and $y_{n}\in \partial v_{n}(x'_{n})$ with $\abs{y_{n}-\nabla v(x_{n})}\geq \varepsilon$.

  By compactness, there exists $y\in \RR^2$, $x\in K$, such that (up to an additional extraction) $y_{n}\to y$ and $x_{n}\to x$. Passing to the limit in the subgradient inequality
  \begin{align}
    \forall x''\in \Omega,\quad    v_{n}(x'')&\geq v_{n}(x_{n}')+\dotp{y_{n}}{x''-x'_{n}},\\
  \mbox{we obtain}\quad\quad\quad \forall x''\in \Omega,\quad   v(x'')&\geq v(x)+\dotp{y}{x''-x}.
\end{align}
Hence $y=\nabla v(x)$ which yields a contradiction with $\abs{y_{n}-\nabla v(x_{n})}\geq \varepsilon$ for $n$ large enough. This yields the claimed result.
\end{proof}

Now, we consider the finite difference scheme applied to a convex function.

\begin{lemma}\label{lem:cvfindisccvx}
  Let $\Omega\subset \RR^2$ be a nonempty open convex set, and $\{v_n\}_{n\in\NN}$ be a family of finite-valued convex functions on $\Omega$ which converges pointwise on $\Omega$ towards some function \revision{$v\in \Cder^{1}(\Omega)$}. 
Define $V_n[x]\eqdef v_{n}(x) $ for all $x\in \Omega\cap \gridn$.
Then, for all compact set $K\subset \Omega$,
\begin{align}
  \lim_{n\to+\infty}\sup_{x\in K\cap \gridn} \abs{\gradnc V_n[x]- \nabla v(x)}
\end{align}
 where the centered finite difference operator $\gradnc$ is defined in~\eqref{d1hC}.
\end{lemma}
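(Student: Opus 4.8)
The plan is to squeeze the centered difference quotient, coordinate by coordinate, between directional derivatives that can be read off from subgradients of $v_n$ at the three nearby grid points $x$ and $x\pm\stsizen e_i$ (with $e_1=(1,0)$, $e_2=(0,1)$), and then invoke Lemma~\ref{lem:cvugradcvx}.

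First I would fix a compact $K\subset \Omega$ and $\rho>0$ with $\{x':\dist(x',K)\le \rho\}\subset\Omega$; for $n$ large enough (so that $\stsizen<\rho$), the points $x\pm\stsizen e_1$ and $x\pm\stsizen e_2$ stay in $\Omega$ for every $x\in K\cap\gridn$, hence $\gradnc V_n[x]$ is well defined. Recall that a finite-valued convex function on an open convex set has a nonempty subdifferential at every point. Then, for $i\in\{1,2\}$, any $p\in \partial v_n(x)$ and any $q^{\pm}\in\partial v_n(x\pm\stsizen e_i)$, the subgradient inequalities applied at $x$ and at $x\pm\stsizen e_i$ yield the chain
\[
\dotp{q^-}{e_i}\ \le\ \frac{v_n(x)-v_n(x-\stsizen e_i)}{\stsizen}\ \le\ \dotp{p}{e_i}\ \le\ \frac{v_n(x+\stsizen e_i)-v_n(x)}{\stsizen}\ \le\ \dotp{q^+}{e_i}.
\]
Since the $i$-th component of $\gradnc V_n[x]$ is the average of the second and fourth terms above, it lies in the interval $\bigl[\tfrac12(\dotp{q^-}{e_i}+\dotp{p}{e_i}),\ \tfrac12(\dotp{p}{e_i}+\dotp{q^+}{e_i})\bigr]$, and therefore, using $|e_i|=1$,
\[
\bigl|(\gradnc V_n[x])_i-\dotp{\nabla v(x)}{e_i}\bigr|\ \le\ \max\bigl(|p-\nabla v(x)|,\ |q^+-\nabla v(x)|,\ |q^--\nabla v(x)|\bigr).
\]

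Summing over $i$ and taking the supremum over $x\in K\cap\gridn$ gives
\[
\sup_{x\in K\cap\gridn}\bigl|\gradnc V_n[x]-\nabla v(x)\bigr|\ \le\ \sqrt{2}\ \sup_{x\in K}\ \sup_{|x'-x|\le \stsizen}\ \sup_{y\in\partial v_n(x')}\bigl|\nabla v(x)-y\bigr|,
\]
and the right-hand side tends to $0$ by Lemma~\ref{lem:cvugradcvx} (applied with $C=1$, using that the $v_n$ are convex, converge pointwise to $v$, and $v\in\Cder^{1}(\Omega)$). This proves the claim; moreover, the same one-sided inequalities in the chain above immediately give the analogous convergence for the forward and backward difference operators $\gradnf$ and $\gradnb$ of~\eqref{d1hFB}.

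There is no genuine obstacle here; the only points requiring care are that the subgradients $q^{\pm}$ are evaluated at the grid neighbours $x\pm\stsizen e_i$, which need not belong to $K$ but do lie within distance $\stsizen$ of it — this is precisely why Lemma~\ref{lem:cvugradcvx} is stated with an inner supremum over a $C\stsizen$-neighbourhood of $K$ rather than over $K$ itself — and that one must restrict to $n$ large enough for those neighbours to stay inside $\Omega$.
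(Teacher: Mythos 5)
Your proof is correct and follows essentially the same route as the paper: both arguments reduce the claim to Lemma~\ref{lem:cvugradcvx} by controlling the difference quotients through subgradients of $v_n$ at points within distance $\stsizen$ of $K$. The only (immaterial) difference is that the paper writes each increment as $\stsizen\int_0^1 \max_{y\in\partial v_n(x+s\stsizen e_i)}\dotp{y}{e_i}\,ds$ via the integral representation of convex increments, whereas you sandwich the quotients with the two-sided subgradient inequality at the endpoints.
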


\begin{proof}
  By~\cite[Theorem VI.2.3.4]{hiriart-lemarechal-1996}, we see that 
\begin{align}
  \delta^{\stsizen}_{(1,0)} V_n [x] =V_{n}[x+\stsizen(1,0)]-V_{n}[x] = \stsizen\int_0^1 \max_{y\in \partial v_n(x+s\stsizen(1,0))}\dotp{y}{(1,0)}\d s,
\end{align}
so that by Lemma~\ref{lem:cvugradcvx}, the above quantity converges uniformly on $K$ towards $\frac{\partial v}{\partial x_1}(x)$.

Similarly, the quantities $\delta^{\stsizen}_{(-1,0)}V_n [x]$, $\delta^{\stsizen}_{(0,1)}V_n [x]$ and  $\delta^{\stsizen}_{(1,0)}V_n [x]$ respectively converge to $-\frac{\partial v}{\partial x_1}(x)$, $\frac{\partial v}{\partial x_2}(x)$ and $-\frac{\partial v}{\partial x_2}(x)$ uniformly.
\end{proof}

\subsection{Finite difference schemes for the subsampled sequences}
Now, let us turn to the values $\left(\udisc[x]\right)_{x\in \domD\cap \gridn}$ defined in Section~\ref{sec:discretize} and the function  $\utc$ defined in~\eqref{eq:defutc}. In the following, $v$ denotes the function constructed in Lemma~\ref{lem:vC1}, as the limit of $(\utc)_{n\in\NN}$. Unfortunately, we cannot directly apply Lemma~\ref{lem:cvfindisccvx} to $\utc$ as it is not really an interpolation of $\left(\udisc[x]\right)_{x\in \domD\cap \gridn}$: there could be some points $x\in \domD\cap \gridn$ such that $\utc(x)<\udisc[x]$ hence $\gradnc\udisc$ is not a priori the centered finite difference scheme applied to $\utc$.

However, the first two properties of Proposition~\ref{prop:udisc} express the fact that $\udisc$ is directionally convex, in the sense of~\cite[Appendix A]{Mirebeau2016}. In particular, defining the subsampled grids
\begin{align*}
  \gridUn&\eqdef \stsize(2\ZZ)\times (2\ZZ),\quad \gridDn\eqdef \stsize(2\ZZ+1)\times (2\ZZ),\\
  \quad \gridTn&\eqdef  \stsize(2\ZZ)\times (2\ZZ+1), \quad \gridQn=\stsize(2\ZZ+1)\times(2\ZZ+1),
\end{align*}
there exist convex lower semi-continuous functions $\vUn$, $\vDn$, $\vTn$, $\vQn : \RR^2\rightarrow \RR\cup\{+\infty\}$ such that 
\begin{align}
  \forall x\in \gridUn\cap \domD,\quad  \vUn(x)=\udisc[x],
\end{align}
and similarly, replacing $(I)$ with $(II)$, $(III)$, $(IV)$.

The following Lemma shows that those convex functions are not too far from our interpolation $\utc$, at least far from the boundary of $\domD$.
We define $\domDn\eqdef \enscond{x\in \domD}{x+2\stsizen\setdir\subset \domD}$.

\begin{proposition}\label{prop:boundcvx}
The following inequalities hold.
\begin{align}
  \forall x\in \gridUn\cap \domD,\quad   \vUn(x)&\geq \utc(x),\label{eq:boundcvx1}\\
  \forall x\in \gridUn\cap \domDn,\quad \utc(x)&\geq \vUn(x)-C\stsizen,\label{eq:boundcvx2}
\end{align}
where $C=4\max\enscond{\sigtg(e)}{e\in \{(\pm1,0),(0,\pm1)\revision{\}}}$.

The corresponding inequalities also hold for $\vDn$, $\vTn$, $\vQn$.
\end{proposition}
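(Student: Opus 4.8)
The plan is to prove the two inequalities \eqref{eq:boundcvx1} and \eqref{eq:boundcvx2} for $\vUn$; the cases of $\vDn$, $\vTn$, $\vQn$ are identical after translating the grid. The first inequality is the easy direction. Since $\utc$ is, by construction \eqref{eq:defutc}, the supremum of affine functions $L$ with $\nabla L\in\TGn$ and $L\leq \udisc$ on all of $\gridn\cap\domD$, and since $\vUn$ is a convex function that agrees with $\udisc$ on the subgrid $\gridUn\cap\domD$, any such $L$ satisfies $L\leq \udisc=\vUn$ on $\gridUn\cap\domD$. By convexity of $\vUn$ and the fact that affine minorants on a set extend to minorants on its convex hull, $L\leq \vUn$ on $\co(\gridUn\cap\domD)\supseteq \gridUn\cap\domD$; in particular $L(x)\leq \vUn(x)$ for $x\in\gridUn\cap\domD$. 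Taking the supremum over $L$ gives $\utc(x)\leq \vUn(x)$, which is \eqref{eq:boundcvx1}. (One must be slightly careful that $\vUn$ is built so that its values on $\gridUn\cap\domD$ really equal $\udisc$; this is exactly the directional-convexity statement invoked just before the proposition, i.e.\ points $1$ and $2$ of Proposition~\ref{prop:udisc} together with the construction in~\cite[Appendix A]{Mirebeau2016}.)

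For the reverse inequality \eqref{eq:boundcvx2}, the idea is to exhibit, for a fixed $x\in\gridUn\cap\domDn$, an affine function $L$ competing in the supremum defining $\utc(x)$ whose value at $x$ is at least $\vUn(x)-C\stsizen$. Since $\vUn$ is convex and finite near $x$ (here we use $x\in\domDn$, so a full $\stsizen$-neighbourhood of $x$ in the $\setdir$ directions lies in $\domD$ where $\vUn<+\infty$), pick $p\in\partial\vUn(x)$ and consider $L_0:x'\mapsto \vUn(x)+\dotp{p}{x'-x}$. This $L_0$ minorizes $\vUn$ everywhere and agrees with $\udisc$ at $x$, but its slope $p$ need not lie in $\TGn$, and it need not lie below $\udisc$ at the \emph{odd} grid points that are not in $\gridUn$. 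Both defects are controlled by the Lipschitz-type bounds of Proposition~\ref{prop:udisc}: the third point \eqref{eq:disclip} forces successive differences of $\udisc$ along each $e\in\setdir$ to lie in $[-\sigtg(-e),\sigtg(e)]$, which both confines $p$ to (a neighbourhood of) $\TGn$ and bounds how far $\udisc$ can drop over one grid step at the intermediate points. One then lowers the intercept of $L_0$ by at most $C\stsizen$ — where $C=4\max\{\sigtg(e):e\in\{(\pm1,0),(0,\pm1)\}\}$, accounting for the (at most two) coordinate directions and the single-step excursion to odd points in each — to obtain an admissible competitor $L$ with $L(x)\geq \vUn(x)-C\stsizen$, and possibly clip the slope into $\TGn$ (this only decreases $L$ at points away from $x$, using $\oTG\subset\TGn$ so the clipped slope still satisfies the constraint, and at worst costs another controlled constant absorbed into $C$). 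Taking the supremum yields $\utc(x)\geq \vUn(x)-C\stsizen$.

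The main obstacle is the bookkeeping in the second part: one must verify that perturbing the intercept of the subgradient-supporting plane of $\vUn$ by $O(\stsizen)$ simultaneously (a) restores the property $L\leq\udisc$ at the intermediate grid points lying between points of $\gridUn$, and (b) is compatible with having $\nabla L\in\TGn$, all with the explicit constant $C=4\max\{\sigtg(e):e\in\{(\pm1,0),(0,\pm1)\}\}$. The key facts that make this work are \eqref{eq:disclip} (monotone, uniformly bounded one-step increments of $\udisc$ along each direction), the inclusion $\oTG\subset\TGn$ (so that slopes in $\oTG$, and in particular subgradients that are limits of such, are legitimate), and the restriction to $x\in\domDn$, which guarantees that all the grid points involved in these estimates actually lie in $\domD$ where everything is finite. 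Once these are in place, the estimate \eqref{eq:boundcvx2} and hence the proposition follow, and the statements for $\vDn$, $\vTn$, $\vQn$ are obtained verbatim by shifting the base grid by $\stsizen(1,0)$, $\stsizen(0,1)$, $\stsizen(1,1)$ respectively.
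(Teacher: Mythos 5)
Your strategy for \eqref{eq:boundcvx2} is essentially the paper's: take $p\in\partial\vUn(x)$, form the supporting plane at $x$, and lower its intercept by $O(\stsizen)$ so that it becomes an admissible competitor in the supremum \eqref{eq:defutc} defining $\utc$. The first inequality is also fine, although your detour through convex hulls is both unnecessary and, as a general claim, false: an affine function lying below a convex function on a finite set need not lie below it on the convex hull (compare $t\mapsto t^2$ with $L\equiv 1$ on $\{-1,1\}$). All you need is $L(x)\leq \udisc[x]=\vUn(x)$ at the points of $\gridUn\cap\domD$ themselves, which is immediate.

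The genuine gap is in your treatment of the constraint $\nabla L\in\TGn$. You assert that \eqref{eq:disclip} only confines $p$ to ``a neighbourhood of'' $\TGn$ and propose to clip the slope into $\TGn$, absorbing the cost into $C$. As stated this does not work: replacing $p$ by some $p'\in\TGn$ changes the affine function at a grid point $x''$ by $\dotp{p'-p}{x''-x}$, which is of order $\abs{p'-p}\cdot\mathrm{diam}(\domD)$, not $O(\stsizen)$, unless you first prove $\abs{p'-p}=O(\stsizen)$ --- which you do not; and $C$ is an explicit constant in the statement, so it cannot silently absorb further unquantified contributions. The correct resolution is that no clipping is needed: this is exactly what the hypothesis $x\in\domDn$ buys. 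For $x\in\gridUn\cap\domDn$ and $e\in\setdir$ the point $x+2\est$ lies in $\domD$, so the subgradient inequality applied along the subgrid step $2\est$ gives
\begin{align*}
\dotp{p}{2\est}\leq \vUn(x+2\est)-\vUn(x)=\udisc[x+2\est]-\udisc[x]\leq \sigtg(2\est),
\end{align*}
hence $\dotp{p}{e}\leq\sigtg(e)$ for every $e\in\setdir$, i.e.\ $p\in\TGn$ exactly. With that in hand, the only correction needed is the one you describe for the odd grid points, and the bookkeeping ($\abs{\dotp{p}{\est}}\leq 2M\stsizen$ from $p\in\TGn$ and $\sigtg(-\est)\leq 2M\stsizen$, with $M=\max\enscond{\sigtg(e)}{e\in\{(\pm1,0),(0,\pm1)\}}$) delivers precisely the announced constant $C=4M$.
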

\begin{proof}
The first inequality readily follows from the construction of $\utc$,
\begin{align*}
 \forall x\in \gridUn\cap \domD, \quad  \utc(x)\leq \udisc[x]=\vUn(x).
\end{align*}

Now we deal with \eqref{eq:boundcvx2}. Let $x\in \gridUn\cap \domDn$. We first prove that $\partial \vUn(x)\subset \oTGn$. For any $y\in \partial \vUn(x)$, all $e\in \setdir$, and using Proposition~\ref{prop:udisc},
\begin{align*}
  \dotp{y}{2\est}\leq \vUn(x+2\est) -\vUn(x)=\udisc[x+2\est]-\udisc[x]\leq\sigtg(2\est)
\end{align*}
Hence $\dotp{y}{e}\leq \sigtg(e)$ for all $e\in\setdir$, that is $y\in \oTGn$.
Moreover, by the subgradient inequality,
\begin{align}
  \forall x'\in \gridUn\cap \domD,\quad \udisc[x']=\vUn(x')\geq \vUn(x)+\dotp{y}{x'-x}. \label{eq:cvxsubgrad}
\end{align}
For all $x''\in \gridn\cap \domD$, there exists $x'\in \gridUn\cap \domD$ such that $\normi{x''-x'}\leq \stsizen$. Let $e\eqdef \frac{1}{\stsizen}({x''-x'})$ satisfying $\normi{e}\leq 1$ (hence $\abs{\dotp{y}{e}}\leq 2\max\enscond{\sigtg(e)}{e\in \{(\pm1,0),(0,\pm1)}$).
Using $ \udisc[x']\leq  \udisc(x'')+\sigtg(-\est)$ in~\eqref{eq:cvxsubgrad} we get
\begin{align*}
  \udisc(x'')+\sigtg(-\est)&\geq  \vUn(x)+\dotp{y}{x''-x} -\dotp{y}{\est},\\
  \mbox{hence}\quad \udisc(x'')&\geq \dotp{y}{x''-x}+\vUn(x)-C\stsize.
\end{align*}
As a result, the affine mapping $L:x''\mapsto \left(\dotp{y}{x''-x}+\vUn(x)-C\stsize\right)$ minorizes $\udisc[x'']$ at each $x''\in  \gridn\cap \domD$, and $\nabla L=y\in \oTGn$. We deduce that $\utc\geq L$. In particular, $\utc(x)\geq L(x)=\vUn(x)-C\stsizen$.
\end{proof}

The next proposition shows that, along sequences, $\utc$ and $\vUn$ share the same limit as $\stsize\to 0^+$.
\begin{proposition}\label{prop:cvcvx}
  Let $K\subset \inte(\domD)$ be a compact set. 
  
  There exists $\vlimo\in \Cder(K)$ such that for $n$ large enough $\vUn\in \Cder^{}(K)$ and up to a subsequence, \begin{align}
    \lim_{n\to+\infty} \norm{\vUn-\vlimo}_{L^\infty(K)}=\lim_{n\to+\infty} \norm{\vDn-\vlimo}_{L^\infty(K)}&=0,\\
    \lim_{n\to+\infty} \norm{\vTn-\vlimo}_{L^\infty(K)}=\lim_{n\to+\infty} \norm{\vQn-\vlimo}_{L^\infty(K)}&=0,\\
\lim_{n\to+\infty} \norm{\utcn-\vlimo}_{L^\infty(K)}&=0.
  \end{align}
\end{proposition}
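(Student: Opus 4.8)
The plan is to deduce this proposition from the compactness already established for $\utc$ (Lemma~\ref{lem:subgradh}) together with the uniform two-sided bounds of Proposition~\ref{prop:boundcvx}. First I would recall that by Lemma~\ref{lem:subgradh}, the family $\{\utcn\}_{n\in\NN}$ is relatively compact in $\Cder(\RR^2)$ for the topology of uniform convergence on compact sets; extract a subsequence along which $\utcn\to \vlimo$ uniformly on the given compact $K\subset \inte(\domD)$. It remains to show that the four convex functions $\vUn,\vDn,\vTn,\vQn$ converge to the same limit on $K$. Fix $K$; since $K$ is compact and contained in $\inte(\domD)$, for $n$ large enough we have $K\subset \domDn$ (because $\domDn$ exhausts $\inte(\domD)$ as $\stsizen\to 0^+$), so Proposition~\ref{prop:boundcvx} applies at every point of $\gridUn\cap K$.

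Next I would upgrade the pointwise (grid) inequalities of Proposition~\ref{prop:boundcvx} to genuine sup-norm estimates on $K$. The key point is that both $\utcn$ and the $\vUn$ (etc.) are convex and uniformly Lipschitz on a neighborhood of $K$: $\utcn$ is uniformly Lipschitz by Proposition~\ref{prop:generalutc} (its subdifferential lies in $\TG_0$), and $\vUn$ interpolates $\udisc$ whose increments are controlled by Proposition~\ref{prop:udisc}, so $\partial\vUn$ is likewise uniformly bounded. Hence the value of each of these functions at an arbitrary $x\in K$ differs from its value at the nearest point of the relevant subsampled grid by at most $C'\stsizen$. Combining this with~\eqref{eq:boundcvx1}--\eqref{eq:boundcvx2} gives, for $n$ large,
\begin{equation*}
  \sup_{x\in K}\abs{\vUn(x)-\utcn(x)}\leq C''\stsizen,
\end{equation*}
and the same for $\vDn,\vTn,\vQn$ (the roles of $(I),\dots,(IV)$ are symmetric and Proposition~\ref{prop:boundcvx} states the analogues explicitly). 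Therefore along the chosen subsequence each of $\vUn,\vDn,\vTn,\vQn$ converges uniformly on $K$ to the same limit $\vlimo$ as $\utcn$, and $\vlimo\in\Cder(K)$ as a uniform limit of continuous functions (indeed it is convex, hence automatically continuous on $\inte(\domD)$). Finally, for $n$ large, $\vUn$ is finite on a neighborhood of $K$ (it is minorized by $\utcn$, and majorized using the Lipschitz bound from a single finite value $\udisc[x_0]$), hence $\vUn\in\Cder(K)$, as claimed.

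The main obstacle is the passage from the grid-point inequalities in Proposition~\ref{prop:boundcvx} to sup-norm control over all of $K$: one must ensure the off-grid oscillation of each convex function is $O(\stsizen)$, which requires a uniform (in $n$) Lipschitz bound valid on a fixed neighborhood of $K$, and one must check the inclusion $K\subset\domDn$ for $n$ large so that~\eqref{eq:boundcvx2} is actually available at the relevant grid points. Both are routine given Propositions~\ref{prop:generalutc} and~\ref{prop:udisc}, but they are the places where the argument could go wrong if $K$ were allowed to touch $\partial\domD$. A minor additional point is that the subsequence extraction must be done once (for $\utcn$) and then the other three families inherit convergence for free from the $O(\stsizen)$ comparison, so no further diagonal extraction is needed.
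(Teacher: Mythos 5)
Your proof is correct and follows essentially the same route as the paper's: Ascoli--Arzel\`a compactness from the uniform Lipschitz bounds of Proposition~\ref{prop:udisc}, combined with the two-sided comparison of Proposition~\ref{prop:boundcvx} upgraded to a sup-norm estimate on $K$ via the uniform Lipschitz property. The only (immaterial) difference is that you extract the convergent subsequence from $\utcn$ first and transfer to $\vUn,\dots,\vQn$, whereas the paper extracts it from $\vUn$ first and then transfers to $\utcn$; your explicit $O(\stsizen)$ sup-norm bound is, if anything, a slightly cleaner way to phrase the paper's ``dense subset'' step.
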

\begin{proof}
  The first point follows from Proposition~\ref{prop:udisc} which ensures that $(\udisc[x])_{n\in\NN}$ is bounded uniformly in $x\in\gridn\cap \domD$. As a result, the convex function $\vUn$ is uniformly bounded (hence uniformly Lipschitz) in a neighborhood of $K$ and Ascoli-Arzel\`a's theorem ensures the convergence of $\vUn$ towards some $\vlimo$ up to a subsequence. 

  By Proposition~\ref{prop:boundcvx}, we see that $\utcn$ must converge (pointwise) towards $\vlimo$ on a dense subset of $K$. Since $K$ is compact and the functions are uniformly Lipschitz, the convergence is uniform on $K$.

  Since Proposition~\ref{prop:boundcvx} also holds for $\vDn$, $\vTn$, $\vQn$, we deduce that those convex functions also converge uniformly (along the same subsequence) towards $\vlimo$.
\end{proof}

In Lemma~\ref{lem:subgradh}, we show that $\{\utc\}_{n\in\NN}\subset \Cder{}(\RR^2)$ is precompact for the topology of uniform convergence on compact subsets of $\RR^2$. As a consequence of Proposition~\ref{prop:boundcvx}, if $\utcn$ converges along some subsequence towards some function $\vlim$, then $\vUn$, \ldots $\vQn$ converge uniformly towards $\vlim$ on compact subsets of $\inte(\domD)$, along the same subsequence.
Now, in Lemma~\ref{lem:vC1}, we show that $\vlim\in \Cder^{1}(\RR^2)$. We may now state the main result of this section.

\begin{proposition}
  Assume that (up to a subsequence) $\utcn$ converges uniformly on the compacts subsets of $\RR^2$ towards some function $\vlim\in \Cder^{1}(\RR^2)$. Then, for all $K\subset \inte(\domD)$ compact, 
\begin{align}
  \lim_{n\to+\infty}\max_{x\in K\cap \gridn}\abs{\gradnc\udisc[x]-\nabla v(x)}=0,\label{eq:unifgradc}\\
  \lim_{n\to+\infty}\max_{x\in K\cap \gridn}\abs{\gradnf\udisc[x]-\nabla v(x)}=0,\label{eq:unifgradf}\\
  \lim_{n\to+\infty}\max_{x\in K\cap \gridn}\abs{\gradnb\udisc[x]-\nabla v(x)}=0.\label{eq:unifgradb}
\end{align}
\end{proposition}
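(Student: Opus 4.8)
The plan is to transfer everything to the four convex functions $\vUn,\vDn,\vTn,\vQn$ associated with the directional convexity of $\udisc$ (Proposition~\ref{prop:udisc}), since the results of the two previous subsections apply to them directly, whereas they do not apply to $\udisc$ itself. Fix a compact set $K\subset\inte(\domD)$, choose an open convex set $\Omega$ with $K\subset\Omega$ and $\overline\Omega\subset\inte(\domD)$ compact, and a compact $K^{+}\subset\inte(\domD)$ containing an $\stsizen$-neighbourhood of $K$ for all $n$ large. Since $\utcn\to v$ uniformly on compacts, the remark following the proof of Proposition~\ref{prop:cvcvx} (which rests on Proposition~\ref{prop:boundcvx}) gives, along the same subsequence and for $n$ large, that each of $\vUn,\vDn,\vTn,\vQn$ is finite-valued and continuous on $\Omega$ and converges uniformly on $\overline\Omega$, hence pointwise on $\Omega$, towards $v$. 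As $v\in\Cder^{1}(\RR^2)$, Lemma~\ref{lem:cvugradcvx} (with constant $C=1$) then shows that, for each of these four convex functions $w_n$,
\[
  \sup_{x\in K^{+}}\ \sup_{\abs{x'-x}\leq\stsizen}\ \sup_{y\in\partial w_n(x')}\ \abs{\nabla v(x)-y}\ \longrightarrow\ 0\qquad\text{as }n\to+\infty .
\]

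Next I would handle the centered scheme. Writing $e_1\eqdef(1,0)$, $e_2\eqdef(0,1)$, the key point is that for any $x\in\gridn$ the two endpoints $x\pm\stsizen e_1$ always belong to the \emph{same} one of the four subsampled grids (the one obtained from the grid containing $x$ by toggling the parity of the first coordinate); likewise $x\pm\stsizen e_2$ lie on a common subsampled grid. Hence, if $w_n$ denotes the corresponding function among $\vUn,\dots,\vQn$, the first component of $\gradnc\udisc[x]$ equals $\tfrac1{2\stsizen}\bigl(w_n(x+\stsizen e_1)-w_n(x-\stsizen e_1)\bigr)$, i.e.\ a genuine centered difference of the convex function $w_n$ at $x$ (even though $x$ is not a node of its defining grid). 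By the integral representation of the increments of a finite-valued convex function (\cite[Theorem~VI.2.3.4]{hiriart-lemarechal-1996}, exactly as in the proof of Lemma~\ref{lem:cvfindisccvx}) this equals $\int_0^1\max_{y\in\partial w_n(x+(2t-1)\stsizen e_1)}\dotp{y}{e_1}\,\d t$, so it differs from $\dotp{\nabla v(x)}{e_1}$ by at most the supremum in the display above, which tends to $0$ uniformly for $x\in K$. The same argument in direction $e_2$ handles the second component, yielding~\eqref{eq:unifgradc}.

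Finally I would deduce the one-sided schemes from the centered one by a squeezing argument. Since $e_1,e_2\in\setdir$, Proposition~\ref{prop:udisc} (points~1 and~2) gives $\De\udisc[x]\geq 0$ for $e\in\{e_1,e_2\}$ at every grid point whose two neighbours in direction $e$ lie in $\domD$; equivalently, along each grid line parallel to $e_i$ the values of $\udisc$ form a convex sequence, so its one-sided increments are nondecreasing along the line. Combining this with the identity for the centered difference gives, for $x\in K\cap\gridn$ and $n$ large (so that all points below lie in $\inte(\domD)$),
\[
 \tfrac1{2\stsizen}\bigl(\udisc[x+\stsizen e_1]-\udisc[x-\stsizen e_1]\bigr)\ \leq\ \tfrac1{\stsizen}\bigl(\udisc[x+\stsizen e_1]-\udisc[x]\bigr)\ \leq\ \tfrac1{2\stsizen}\bigl(\udisc[x+2\stsizen e_1]-\udisc[x]\bigr),
\]
where the left member is the first component of $\gradnc\udisc[x]$ and the right member is the first component of $\gradnc\udisc[x+\stsizen e_1]$; similarly the backward increment $\tfrac1{\stsizen}(\udisc[x]-\udisc[x-\stsizen e_1])$ coincides with the first component of $\gradnf\udisc[x-\stsizen e_1]$ and lies between the first components of $\gradnc\udisc[x-\stsizen e_1]$ and $\gradnc\udisc[x]$. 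Both bounding centered differences converge uniformly on $K^{+}$ to $\tfrac{\partial v}{\partial x_1}$ at the relevant point by~\eqref{eq:unifgradc} (applied with $K^{+}$ in place of $K$), and $\tfrac{\partial v}{\partial x_1}$ is uniformly continuous on $K^{+}$ because $v\in\Cder^{1}$, so those two bounds differ by $o(1)$ uniformly in $x\in K$; the squeeze yields the uniform convergence of the first component of $\gradnf\udisc$ and $\gradnb\udisc$ towards $\tfrac{\partial v}{\partial x_1}$, and the same with $e_2$ gives~\eqref{eq:unifgradf} and~\eqref{eq:unifgradb}.

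The main obstacle is essentially bookkeeping rather than analysis: none of $\gradnc\udisc$, $\gradnf\udisc$, $\gradnb\udisc$ is a finite-difference operator applied to a single convex function, because the four sublattices carry four different convex extensions of $\udisc$. One must track which of $\vUn,\dots,\vQn$ controls each increment, exploit that the two endpoints of a coordinate stencil always sit on a common sublattice (so that the centered scheme is genuinely a convex-function centered difference to which Lemma~\ref{lem:cvugradcvx} applies), and then use the convexity-based squeeze to repair the endpoint/centre mismatch for the forward and backward schemes.
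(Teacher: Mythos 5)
Your proof is correct and follows essentially the same route as the paper's: both reduce everything to the four parity sublattices carrying the convex functions $\vUn,\dots,\vQn$, invoke Lemma~\ref{lem:cvugradcvx} through the integral representation of increments, and use the monotonicity of slopes from Proposition~\ref{prop:udisc} to squeeze the remaining scheme. The only difference is the order of the two halves: the paper sandwiches the one-sided differences between the double-step quotients $\tfrac{1}{2\stsizen}(\udisc[x+2\est]-\udisc[x])$ and $\tfrac{1}{2\stsizen}(\udisc[x]-\udisc[x-2\est])$ (which live on the sublattice of $x$) and then obtains the centered scheme by averaging, whereas you prove the centered scheme first (observing that $x\pm\stsizen e_i$ share a common sublattice) and then squeeze the one-sided ones between centered differences at neighbouring nodes, at the small extra cost of invoking the uniform continuity of $\nabla v$ on $K^{+}$.
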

\begin{proof}
  We note that from Proposition~\ref{prop:cvcvx}, $\vUn$, \ldots $\vQn$ also converge uniformly on compact subsets of $\inte(\domD)$ towards $\vlim$.

  We begin with the forward and backward differences defined in~\eqref{d1hFB}. Let $e=(1,0)$ or $e=(0,1)$. By monotonicity of the slopes (see Proposition~\ref{prop:udisc}),
  \begin{align}
    \frac{\udisc[x+2\est]-\udisc[x]}{2\stsizen} \geq  \frac{\udisc[x+\est]-\udisc[x]}{\stsizen}\geq  \frac{\udisc[x]-\udisc[x-\est]}{\stsizen}\geq  \frac{\udisc[x]-\udisc[x-2\est]}{2\stsizen}\label{eq:sandwichslope}
  \end{align}
  
  To fix ideas, let us assume that $x\in \gridUn$. Then $x\pm 2\est\in \gridUn$ and applying Lemma~\eqref{lem:cvfindisccvx} to $\vUn$ (in some compact set $K'$, $K\subset K'\subset \inte(\domD)$), we see that the left and right-hand sides of \eqref{eq:sandwichslope} converge towards $\dotp{\nabla\vlim(x)}{e}$, uniformly in $x\in \gridUn\cap K$. The same argument holds for $x$ in $\gridDn$, $\gridTn$ or $\gridQn$, so that we get~\eqref{eq:unifgradf} and \eqref{eq:unifgradb}.

  By linear combination, we deduce~\eqref{eq:unifgradc}.
\end{proof}

\section{MA-LBR overestimates the subgradient} 
\label{sec:apxmasub}

The following proof of Lemma~\ref{lem:subgradh2} was suggested to us by J.-M. Mirebeau.
We denote by $\partial_x u$ the subgradient of a convex function $u$ at a point $x$.

\begin{lemma}
Let $\Omega$ be a convex neighborhood of a point $x \in \RR^d$, and let  $u,v : \Omega \to \R^d$ be convex. If $u \leq v$, and $u(x)=v(x)$ then $\partial_x u \subset \partial_x v$. In particular $|\partial_x u| \leq |\partial_x v|$.
\end{lemma}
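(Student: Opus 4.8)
The plan is to work directly from the definition of the subgradient. Let $p \in \partial_x u$, so that $u(y) \geq u(x) + \langle p, y-x\rangle$ for all $y \in \Omega$. I want to show $p \in \partial_x v$, i.e. that $v(y) \geq v(x) + \langle p, y-x\rangle$ for all $y \in \Omega$. This is a one-line chain: for any $y \in \Omega$,
\[
v(y) \geq u(y) \geq u(x) + \langle p, y-x\rangle = v(x) + \langle p, y-x\rangle,
\]
where the first inequality is the hypothesis $u \leq v$, the second is $p \in \partial_x u$, and the final equality is $u(x) = v(x)$. Hence $p \in \partial_x v$, and since $p$ was arbitrary, $\partial_x u \subset \partial_x v$.

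For the ``in particular'' clause, the inclusion $\partial_x u \subset \partial_x v$ immediately gives $|\partial_x u| \leq |\partial_x v|$, where $|\cdot|$ denotes Lebesgue measure: monotonicity of measure under set inclusion. (One should note that subgradients of convex functions are closed convex sets, hence measurable, so this is legitimate; but no real work is needed.)

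There is essentially no obstacle here — the statement is a direct consequence of the definition of subgradient and is more of a bookkeeping lemma that isolates a convexity fact used in the proof of Lemma~\ref{lem:subgradh2} (where $u = \underbar{u}$ is the lower convex envelope of the grid values and $v$ is taken to be a suitable comparison function realizing the MA-LBR estimate at the grid point $x$). The only thing to be careful about is that all three quantities $u(x)$, $v(x)$, $u(y)$, $v(y)$ are finite, which holds since $u, v$ are real-valued on $\Omega$ (or, in the application, finite at the relevant points); convexity of $u$ and $v$ individually is not even needed for the inclusion itself, only for the interpretation of $\partial_x$ and for measurability in the final sentence.
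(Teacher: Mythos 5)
Your proof is correct and is exactly the standard one-line argument from the definition of the subgradient; the paper in fact states this lemma without proof, treating it as a routine convexity fact, so your argument simply supplies the omitted (and expected) justification. The observations about monotonicity of Lebesgue measure and the measurability of the (closed convex) subdifferential are accurate and complete the ``in particular'' clause.
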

The following Lemma is a consequence of the Brunn-Minkowski inequality (see~\cite{schneider1993convex}).
\begin{lemma}
Let $\Omega$ be a convex neighborhood of a point $x \in \R^d$, and let  $u,v : \Omega \to \R^d$ be convex. Let $w = (1-t) u+t v$ with $0 \leq t\leq 1$. Then $(1-t)\partial_x u + t \partial_x v \subset \partial_x w$, where the sign plus denotes a Minkowski sum. In particular $(1-t) |\partial_x u|^\frac 1 d + t |\partial_x v|^\frac 1 d \leq |\partial_x w|^\frac 1 d$.
\end{lemma}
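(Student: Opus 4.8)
The plan is to establish the Minkowski-sum inclusion $(1-t)\partial_x u+t\partial_x v\subset\partial_x w$ directly from the defining inequality of the subdifferential, and then to read off the scalar inequality as an application of the Brunn--Minkowski inequality. For the inclusion, I would fix arbitrary $p\in\partial_x u$ and $q\in\partial_x v$ and check that $(1-t)p+tq\in\partial_x w$. By definition, for every $y\in\Omega$ one has $u(y)\geq u(x)+\dotp{p}{y-x}$ and $v(y)\geq v(x)+\dotp{q}{y-x}$; multiplying the first inequality by $1-t\geq 0$, the second by $t\geq 0$, and summing yields, for every $y\in\Omega$,
\begin{align*}
  w(y)=(1-t)u(y)+tv(y)\geq w(x)+\dotp{(1-t)p+tq}{y-x},
\end{align*}
which is exactly the assertion that $(1-t)p+tq$ is a subgradient of $w$ at $x$. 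Since $p$ and $q$ were arbitrary, this proves the claimed inclusion.

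Next, since $\Omega$ is a neighborhood of $x$ and $u,v$ are finite-valued convex, the sets $\partial_x u$, $\partial_x v$, $\partial_x w$ are nonempty, convex and compact (standard convex analysis, see \cite{rockafellar1986convex}), so their Lebesgue measures are well defined and finite. Combining the inclusion with the monotonicity of Lebesgue measure, the homogeneity $|\lambda A|=\lambda^d|A|$ for $\lambda\geq 0$, and the Brunn--Minkowski inequality applied to the convex bodies $(1-t)\partial_x u$ and $t\partial_x v$, I obtain
\begin{align*}
  |\partial_x w|\geq |(1-t)\partial_x u+t\partial_x v|\geq\left((1-t)|\partial_x u|^{1/d}+t|\partial_x v|^{1/d}\right)^{d},
\end{align*}
and taking $d$-th roots gives the stated inequality.

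The argument is essentially routine and I do not expect a real obstacle; the only point requiring a word of care is the degenerate situation in which one or both of $\partial_x u$, $\partial_x v$ is lower-dimensional (hence of zero volume), so that one must invoke the form of Brunn--Minkowski valid for arbitrary compact convex sets rather than only for bodies with nonempty interior. In that case the inequality $(1-t)|\partial_x u|^{1/d}+t|\partial_x v|^{1/d}\leq|\partial_x w|^{1/d}$ still holds, with the degenerate term(s) simply contributing $0$.
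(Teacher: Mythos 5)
Your proof is correct and follows exactly the route the paper intends: the paper states this lemma without proof, merely noting it is ``a consequence of the Brunn--Minkowski inequality,'' and your argument supplies the routine details (the convex-combination subgradient inequality for the inclusion, then Brunn--Minkowski for arbitrary compact convex sets to pass to volumes). Your remark about the degenerate lower-dimensional case is a sensible precaution and does not affect the validity of the argument.
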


We denote by $\underline u$ the largest convex function bounded above by $u$.

\begin{proposition}
\label{ref:propOverEstim}
Let $X \subset \RR^d$ be a finite point set, and let $x \in X$. Let $Y \subset X$ be symmetric w.r.t.\ the point $x$, i.e.\ $\forall y \in Y$ one has $2x-y \in Y$.
Let $u : X \to \R$, and let $v : Y \to \R$ be defined by $v(y) := \frac 1 2 (u(y)+u(2 x-y))$. Then
\begin{equation}
|\partial_x \underline u| \leq |\partial_x \underline v|.
\end{equation}
\end{proposition}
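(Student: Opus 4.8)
The plan is to reduce the statement to the two preceding lemmas by a symmetrization argument. First I would extend $u$ to all of $X$ and consider its convex minorant $\underline u$ on $\Conv(X)$. The key observation is that the map $\sigma(z) = 2x - z$ is the point reflection through $x$, and it fixes $x$. Define $\tilde u \eqdef \underline u \circ \sigma$; this is again a convex function on $\Conv(X)$ (composition of a convex function with an affine bijection), it takes the value $\underline u(x)$ at $x$, and by the symmetry of $Y$ it satisfies $\tilde u \leq u$ on $Y$ (since $\sigma(Y) \subseteq X$ and $\underline u \leq u$ on $X$). Moreover, because $\sigma$ is an isometry fixing $x$, one has $\partial_x \tilde u = -\partial_x \underline u$, so in particular $|\partial_x \tilde u| = |\partial_x \underline u|$.

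Next I would form the average $w \eqdef \tfrac12 \underline u + \tfrac12 \tilde u$, which is convex on $\Conv(X)$, and observe that for every $y \in Y$,
\[
  w(y) = \tfrac12\underline u(y) + \tfrac12 \underline u(\sigma(y)) \leq \tfrac12 u(y) + \tfrac12 u(2x-y) = v(y),
\]
so $w$ is a convex function on $\Conv(Y)$ bounded above by $v$ on $Y$; by maximality of $\underline v$ this gives $w \leq \underline v$ on $\Conv(Y)$. Since in addition $w(x) = \underline u(x) = \underline v(x)$ — here one checks $\underline v(x) = \underline u(x)$, which holds because $w \leq \underline v \leq \underline u$ pointwise on $\Conv(Y)$ near $x$ (the minorant of a smaller data set on a smaller set is larger, and $w(x)=\underline u(x)$) — the first lemma applied to the pair $(w,\underline v)$ yields $\partial_x w \subseteq \partial_x \underline v$, hence $|\partial_x w| \leq |\partial_x \underline v|$.

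Finally, the second lemma applied to $u \rightsquigarrow \underline u$, $v\rightsquigarrow \tilde u$, $t = \tfrac12$ gives
\[
  \tfrac12 |\partial_x \underline u|^{1/d} + \tfrac12 |\partial_x \tilde u|^{1/d} \leq |\partial_x w|^{1/d},
\]
and since $|\partial_x \tilde u| = |\partial_x \underline u|$ the left-hand side equals $|\partial_x \underline u|^{1/d}$, so $|\partial_x \underline u| \leq |\partial_x w| \leq |\partial_x \underline v|$, which is the claim. The step I expect to require the most care is the bookkeeping around the value of the convex minorants at $x$ — making sure that $\underline u(x) = w(x) = \underline v(x)$ so that the equality hypothesis of the first lemma is genuinely met, and that all three functions are defined (and convex, finite) on a common convex neighbourhood of $x$; the rest is a direct chaining of the two lemmas.
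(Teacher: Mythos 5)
Your argument follows essentially the same route as the paper's proof: symmetrize, average two convex minorants, and chain the two preceding lemmas (monotonicity of the subgradient at a touching point, then the Brunn--Minkowski estimate for the average). The only real variation is that you reflect $\underline u$ itself, via $\tilde u=\underline u\circ\sigma$, whereas the paper first restricts $u$ to $Y$ and works with $\underline{u_Y}$ and $\underline{u_{-Y}}$; your version dispenses with the intermediate comparison $|\partial_x \underline u|\leq|\partial_x \underline{u_Y}|$ and is otherwise equivalent.

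There is, however, one genuine gap, located exactly at the step you flagged: the touching condition $w(x)=\underline v(x)$. Your justification (``$w\leq\underline v\leq\underline u$ \ldots\ the minorant of a smaller data set on a smaller set is larger'') does not work: $v$ is not the restriction of $u$ to $Y$ but its symmetrized average, so no inequality between $\underline v$ and $\underline u$ follows from $Y\subset X$, and $\underline v\leq\underline u$ is false in general. What you actually have is only the chain $w(x)=\underline u(x)\leq\underline v(x)\leq v(x)=u(x)$, and when $\underline u(x)<u(x)$ this chain need not collapse to equalities, so the first lemma cannot be applied to the pair $(w,\underline v)$. The fix is the paper's opening reduction: if $u(x)>\underline u(x)$ then $|\partial_x\underline u|=0$ and there is nothing to prove; otherwise $\underline u(x)=u(x)$, whence $w(x)=u(x)=v(x)\geq\underline v(x)\geq w(x)$ forces $w(x)=\underline v(x)$, and your chaining of the two lemmas then goes through. (A cosmetic slip along the way: ``$\tilde u\leq u$ on $Y$'' should read $\tilde u(y)\leq u(2x-y)$; this does not affect the inequality $w\leq v$ on $Y$, which you compute correctly.)
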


\begin{proof}
If $u(x) > \underline u(x)$, then $|\partial_x \underline u(x)| = 0$ and there is nothing to prove. We thus assume $u(x) = \underline u(x)$. 
Introduce the restriction $u_Y := u_{|Y}$, and its symmetry $u_{-Y} := u_Y(2x-\cdot)$, and note that $\frac 1 2 (\underline {u_Y}+\underline {u_{-Y}}) \leq \underline v$.

The equality $u(x) = \underline u(x)$ implies $u_Y(x) = \underline {u_Y}(x)$, hence also $u_{-Y}(x) = \underline {u_{-Y}}(x)$ and $v(x) = \underline v(x)$.
We have 
\begin{align*}
  \underline u &\leq \underline {u_Y} \quad \text{hence } |\partial_x \underline u| \leq |\partial_x \underline {u_Y}|,\\
  \frac 1 2 (\underline{u_Y} + \underline {u_{-Y}})   &\leq \underline v \quad  \text{ hence } \frac 1 2 |\partial_x \underline {u_Y}|^\frac 1 d +\frac 1 2 |\partial_x \underline {u_{-Y}}|^\frac 1 d  \leq |\partial_x \underline v|^\frac 1 d.
\end{align*}
The announced result follows since $|\partial_x \underline {u_Y}| = |\partial_x \underline {u_{-Y}}|$.

\end{proof}

The final step to prove  Lemma~\ref{lem:subgradh2}  is given by Remark 1.10 in \cite{malbr} which shows that 
$  \MA(\udisc)[x]\Vn = |\partial_x \underline v (x) |$.


\pagebreak 


\bibliographystyle{alpha}
\bibliography{bibbv2}
\end{document}